\documentclass[10pt]{amsart} 
\usepackage[utf8]{inputenc}

\usepackage{amssymb}
\usepackage{amsmath}
\usepackage{amsthm}
\allowdisplaybreaks 

\usepackage[left=2cm, right=2cm, top=3cm, bottom=3cm]{geometry}
\usepackage{color} 
\usepackage{comment} 

\usepackage{graphicx}
\graphicspath{{figures/}}
\usepackage{enumitem}

\usepackage{xcolor}
\definecolor{czerwony}{RGB}{237,28,36}
\definecolor{zielony}{RGB}{11,161,75}
\definecolor{niebieski}{RGB}{33,63,154}

\usepackage{array}

\newtheorem{theorem}{Theorem}[section]
\newtheorem*{theorem*}{Theorem}

\newtheorem{definition}[theorem]{Definition}
\newtheorem{lemma}[theorem]{Lemma}
\newtheorem{remark}[theorem]{Remark}

\newtheorem{example}[theorem]{Example}

\let\Im\relax
\DeclareMathOperator{\Im}{Im}
\let\Re\relax
\DeclareMathOperator{\Re}{Re}
\DeclareMathOperator{\avg}{avg}

\DeclareMathOperator{\nint}{nint}

\newcommand{\ser}{\mathtt{ser}}

\newcommand{\Brjuno}{\mathtt{Brjuno}}
\newcommand{\SemiBrjuno}{\mathtt{SemiBrjuno}}
\newcommand{\Brj}{\mathfrak{Brj}}
\newcommand{\KL}{\mathtt{KL}}
\newcommand{\KLB}{\mathtt{KLBrj}}

\newcommand{\Strip}{\mathsf{Strip}}
\newcommand{\CritStrip}{\mathsf{CritStrip}}
\newcommand{\Away}{\mathsf{Away}}
\newcommand{\integrald}{\text{d}}
\newcommand{\Sfrak}{\Sigma}
\newcommand{\Bad}{\mathsf{Brjuno}}
\newcommand{\Good}{\mathsf{ConstType}}

\newcommand{\GammaEul}{\Gamma_\mathsf{Eul}}

\title{The one-frequency cohomological equation, Brjuno-like functions and Khintchine-L\'evy numbers}
\author{Piotr Kamieński}
\date{}

\begin{document}

\begin{abstract}
  In the paper we consider the one-frequency cohomological equation
  \begin{equation*}
   (\partial_x + \omega \partial_y) g(x,y) = a(x,y)
  \end{equation*}
  on the 2-torus with unknown $g$ and analytic initial data $a$. We identify all the frequencies $\omega$ for which the equation has an analytic solution and express the analytic solvability condition in terms of two Brjuno-like functions, providing explicit estimates on the sup-norm of $g$. As an example we estimate the Brjuno-like functions for Diophantine and Khintchine-L\'evy numbers. We also construct an example of an arbitrarily small function $a$ for which an analytic $g$ does not exist when one of the Brjuno-like functions has infinite value.
\end{abstract}

\maketitle

\section{Introduction}

In KAM theory the cohomological equation arises as a linearization of the conjugacy equation between the flow of a Hamiltonian system on an invariant torus and the constant velocity flow on a model torus. Solving it is the first step in the Kolmogorov-Newton iterative scheme \cite{kolmogorov:1954} for finding the true solution of the conjugacy equation. It is equally essential to provide good bounds on the solution $g$ in terms of the initial data $a$ and, as analyticity is lost along the way, in terms of $\delta$, the decrement of thickness of the complex domain of~$a$. Good quality of these estimates allows for the infinite number of steps in the iterative scheme to later be succesfully glued together.

The upper bound usually obtained in KAM theory is of the form $||g||_{\varrho-\delta} \leqslant \Gamma(\delta) ||a||_\varrho$, where $||\cdot||_\varrho$ denotes the sup-norm on a complex strip of radius $\varrho > 0$.\footnote{See the beginning of section \ref{sec:results} for a more precise definition.} The value of $\Gamma(\delta)$ explodes to infinity as $\delta \searrow 0$ and the type of singularity $\Gamma$~exhibits at $\delta = 0$ depends on the arithmetic properties of the number\footnote{sometimes referred to as \emph{frequency} in the paper} $\omega$ appearing in the cohomological equation. For instance the classical results of R\"ussmann \cite{russmann:1975} give $\Gamma(\delta) = O(\delta^{-\tau})$ in the case of $\omega$ being $(C, \tau)-$Diophantine with $C>0$ and $\tau \geqslant 1$.

The main aim of the present paper is is to study both the analytic solvability of the cohomological equation and the size of $\Gamma(\delta)$ for any irrational $\omega$ in the simplest, one-frequency case of the cohomological equation being the linear PDE
\begin{equation}
   (\partial_x + \omega \partial_y) g(x,y) = a(x,y)
\end{equation}
on the 2-torus. It is known that convergence of the Brjuno function is an optimal or close-to-optimal condition in certain conjugacy problems involving small divisors \cite{brjuno:1969, yoccoz:linearyzacja-kwadratowego, russmann:brjuno-paper, buff-cheritat}. We introduce what we call the \emph{semi-Brjuno condition} which, in a sense, provides an ``if and only if'' statement about the analytic solvability of the cohomological equation. This condition is given as a convergence requirement for certain series stemming from the continued fraction expansion of $\omega$, which we label \emph{Brjuno-like functions}. The estimates on $||g||_{\varrho - \delta}$ in terms of these Brjuno-like functions are also given.

It turns out that semi-Brjuno numbers are a non-trivial extension of the classical Brjuno numbers. This, along with the known optimal conjugacy results, provides certain insight into these conjugacy problems which involve small divisors and utilize the Kolmogorov-Newton iterative scheme. Namely it serves as evidence that the ``gluing of the infinite number of steps'' part in the scheme requires a more strict arithmetic condition on $\omega$ than the ``linearized'' part of passing one step further.

To demonstrate that our estimates on $||g||_{\varrho - \delta}$ are close to optimal we compute upper bounds on the Brjuno-like functions for Diophantine frequencies $\omega$. The estimates differ from R\"ussmann's bounds by an $O(\log^2(\delta^{-1}))$ factor, however we conjecture that going down to at least $O(\delta^{-\tau} \log(\delta^{-1}))$ seems possible. To provide evidence that the convergence of the Brjuno-like functions is necessary for the existence of an analytic solution $g$ we construct, for a fixed non-semi-Brjuno $\omega$, arbitrarily small initial data $\hat a$ for the cohomological equation, for which its solution $\hat g$ is not analytic in some complex domain that is strictly smaller than that of $\hat a$.

Additionally we study what we call the \emph{Khintchine-L\'evy numbers}, introduced in a parallel paper \cite{kamienski:2018-khintchine-levy}. These numbers are somewhat similar in nature to the measure-theoretically negligible constant type numbers\footnote{i.e. Diophantine with the lowest reasonable exponent $\tau = 1$ (see definition \ref{def:diophantine}) or equivalently ones with a bounded sequence of partial quotients}, but form a set of almost full measure. Inspired by the classical Khintchine and L\'evy constants theorems \cite{khintchine:khintchine-constant-original-paper, levy:khintchine-levy} the Khintchine-L\'evy numbers are defined as those for which the sequence of products of partial quotients $M_n = a_1 \ldots a_n$ meets estimates that closely resemble the asymptotic growth rate predicted by Khintchine's theorem: $e^{(\kappa - T)n}~\leqslant~M_n~\leqslant~e^{(\kappa + T)n}$ for all $n \geqslant N$, where $e^\kappa \approx 2.685\ldots$ is the Khintchine constant, while $T > 0$ and $N \in \mathbb{N}$ are fixed parameters. In \cite{kamienski:2018-khintchine-levy} estimates on the measure of the set of these numbers are given and in the present paper we prove that $\Gamma(\delta) = O(\delta^{-\gamma} \log(\delta^{-1}))$ with $\gamma \approx 1.428\ldots$ in their case\footnote{Actually it seems plausible that one could use $1$ instead of $\gamma$ in the expression for $\Gamma(\delta)$ if the definition of Khintchine-L\'evy numbers were to be slightly altered, however for reasons laid out in \cite{kamienski:2018-khintchine-levy} the measure they occupied would be more difficult to control.}.

An important feature of Khintchine-L\'evy numbers is the insensitivity of the involved parameters to changes of~$\omega$ on the initial partial quotients and thus only a mild sensitivity of $\Gamma(\delta)$ to such changes. This stems from the requirement that the growth rate of $M_n$ is controlled starting only at some $n = N$. In the Diophantine case such modifications of $\omega$ retain the exponent $\tau$, but might change $C$ by a multiplicative factor.\footnote{Consider for instance the case of $\varphi$ whose partial quotients are all equal to $1$ and a noble number $\varphi_A$ constructed by replacing the first $1$ in $\varphi$ with a fixed, moderately large $A \in \mathbb{N}$. In this case $C_{\tau = 1}(\varphi_A) \approx O(A^{-1}) C_{\tau = 1}(\varphi)$, where $C_{\tau=1}(\cdot)$ denotes the best possible Diophantine $C$ with fixed $\tau = 1$: $C_{\tau = 1}(\omega) := \inf |q||q\omega - p|$ with the infimum taken over all nonzero integer pairs $(p,q)$.} Plugging that into R\"ussmann's estimates gives a multiplicative correction in $\Gamma(\delta)$ since $\Gamma(\delta) = O(C^{-1})$ in terms of $C$, which in turn affects the applicability thresholds of KAM theorems, since these thresholds are usually polynomial in $C$ (e.g. \cite{delallave:kam-without}). For Khintchine-L\'evy numbers, however, a change on the initial partial quotients has no effect on the arithmetic properties of the frequency and, as we prove in theorem \ref{thm:brjuno-estimates}, $\Gamma(\delta)$ suffers only a minor additive $O(1)$ correction in this case. We conjecture that this milder sensitivity of $\Gamma(\delta)$ should also entail a milder sensitivity of the applicability thresholds of KAM theorems.

The remaining part of this paper is structured as follows. In section \ref{sec:results} we first set the stage by defining all the necessary tools of the trade. We then formally recall/introduce Diophantine numbers, Brjuno numbers, semi-Brjuno functions and numbers as well as Khintchine-L\'evy numbers. We conclude section \ref{sec:results} by stating the existence and upper bound results in the most general semi-Brjuno case and the specific Diophantine and Khintchine-L\'evy cases. Section \ref{sec:ideas} contains a heuristic overview of the ideas used in the proofs of the theorems (stripped of technical details as much as possible), while in section \ref{sec:proofs} we fill in all the necessary technicalites and present the proofs of the theorems in full. In section~\ref{sec:brjuno-semibrjuno} we prove that Brjuno numbers are also semi-Brjuno and construct an example that demonstrates the lack of reverse inclusion for these two sets. We conclude the paper with the construction of $\hat a$, the initial data that gives rise to a non-analytic solution $\hat g$ given a non-semi-Brjuno frequency, in section \ref{sec:counterexample}.

\section{Statement of results}\label{sec:results}

\subsection{Notations and basic notions}

Before we proceed with the formulation of our main results we introduce a few terms that will be essential for us. We first specify what we mean precisely by ``analyticity''. In our setting functions are analytic if they can be extended from $\mathbb{R}$ to its complex neighborhood to form holomorphic functions.

For $\varrho > 0$ we set $\Pi(\varrho) = \{ z \in \mathbb{C} : |\Im z| < \varrho \}$ and define $\mathcal{P}_0(\varrho)$ to be the set of all continuous functions $b : \overline{\Pi(\varrho)^2} \mapsto \mathbb{C}$ which are holomorphic on $\Pi(\varrho)^2$, $2\pi$-periodic in both variables and for which $b(\mathbb{R}^2) \subset \mathbb{R}$. We additionally require their averages to vanish:
\begin{equation}
  \avg b := {1 \over (2\pi)^2} \iint_{[0,2\pi)^2} b(x,y) \, \integrald x \, \integrald y = 0.
\end{equation}
The space $\mathcal{P}_0(\varrho)$ is equipped with the standard supremum norm, denoted $|| \cdot ||_\varrho$.

We will be working with irrational frequencies $\omega$, for convenience from the $(0,1)$ interval\footnote{The unit interval restriction is not necessary, we consider it only to avoid dealing with the zeroth partial quotient $a_0$.}: $\omega \in \mathbb{X} := (0,1) \setminus \mathbb{Q}$. All numbers in $\mathbb{X}$ have a unique continued fraction expansion of the form
\begin{equation}
 \omega = [a_1, a_2, a_3, \ldots] = \cfrac{1}{a_1
              + \cfrac{1}{a_2
              + \cfrac{1}{a_3
              + \cfrac{1}{\ddots}}}}
\end{equation}
given by a sequence of positive integers (or \emph{partial quotients} to $\omega$) $a_j, j \geqslant 1$. By \emph{convergents} to $\omega$ we mean the reduced fractions $p_n / q_n$ obtained by truncating the continued fraction expansion of $\omega$ at $a_n$.

Whenever an asterisk in the subscript is used with regard to a set of summation indices we understand it as the exclusion of the index $0$: $S_* := S \setminus \{ 0 \}$. By $\GammaEul$ and $\GammaEul'$ we will denote the Euler Gamma function and its derivative. The Greek letter $\varphi$ is reserved for the golden ratio: $\varphi = (1 + \sqrt{5})/2$. For $\alpha \in \mathbb{R}$ we denote the nearest integer to $\alpha$ by $\nint(\alpha)$.

We now define the four classes of numbers that will be important for us: the Diophantine, Brjuno, semi-Brjuno and Khintchine-L\'evy numbers.

\begin{definition}[Diophantine numbers]\label{def:diophantine}
 Let $\tau \geqslant 1$ and $C > 0$. We say that a real number $\omega$ is \emph{$(C,\tau)$-diophantine} if the inequality
 \begin{equation}\label{eq:dioph-condition}
  |q\omega - p| \geqslant {C \over |q|^\tau}
 \end{equation}
 holds for all integers $p$ and $q$ with $q \neq 0$. A number is called \emph{Diophantine} if it is $(C, \tau)$-Diophantine for some $C > 0$ and $\tau \geqslant 1$.
\end{definition}

\begin{definition}[Brjuno numbers]
 A number $\omega \in \mathbb{X}$ is a \emph{Brjuno number} whenever
 \begin{equation}
  \sum_{n=1}^\infty {\log q_{n+1} \over q_n} < \infty.
 \end{equation}
 We denote the set of Brjuno numbers as $\Brjuno$.
\end{definition}

\begin{definition}[Brjuno-like functions and semi-Brjuno numbers]
 Let $\Delta > 0$. We define the Brjuno-like functions $\Brj_1(\Delta), \Brj_2(\Delta)$ and $\Brj(\Delta)$ through
 \begin{align}\label{eq:brjuno-functions-definition}
 \begin{split}
  \Brj_1(\Delta) = \sum_{n=1}^\infty e^{-q_n \Delta} q_{n+1}, \qquad \Brj_2(\Delta) = \sum_{n=1}^\infty e^{-q_n \Delta} q_{n+1} \log a_{n+1}, \qquad \Brj(\Delta) = 2\Brj_1(\Delta) + \Brj_2(2\Delta).
 \end{split}
 \end{align}
 A number $\omega \in \mathbb{X}$ is called \emph{$\Delta$-semi-Brjuno} when both $\Brj_1(\Delta)$ and $\Brj_2(2\Delta)$ are convergent series. If they converge for all\footnote{contrary to ``for some'' in the Diophantine case} $\Delta > 0$, then $\omega$ is a \emph{semi-Brjuno number}. We denote the sets of $\Delta$-semi-Brjuno numbers and semi-Brjuno numbers by $\SemiBrjuno(\Delta)$ and $\SemiBrjuno$, respectively.
\end{definition}

For a fixed $\omega \in \mathbb{X}$ we set $a_j'(\omega) = 1 + a_j(\omega)$ (here $j = 1, 2, \ldots$). We also define two sequences $(M_n)_{n=1}^\infty$ and $(M_n')_{n=1}^\infty$ through $M_n^\dag = a_1^\dag \ldots a_n^\dag$, where the symbol $\dag$ indicates that equality holds both when an empty superscript and a prime $'$ is used. We will adopt this notation from now on. This way we also define two universal constants $\kappa$ and $\kappa'$ through $\kappa^\dag = \int_\mathbb{X} \log a_1^\dag (x) \, \integrald \gamma (x)$, where $\gamma$ is the Gauss measure on $\mathbb{X}$: $\integrald \gamma(x) = ((1+x)\log 2)^{-1} \integrald x$. Note that $e^\kappa$ is actually the classical Khintchine constant - the Gauss map $G(\omega) = \omega^{-1} - \lfloor \omega^{-1} \rfloor$ is ergodic (\cite{ryll-nardzewski}) and thus by Birkhoff's pointwise ergodic theorem the temporal ${1 \over n} \log M_n$ averages of $\log a_1$ tend to its spatial average almost everywhere. Similarly $\kappa'$ can be thought of as the counterpart of the Khintchine constant for the sequence $(M_n')$.

\begin{definition}[Khintchine-L\'evy numbers ($\KL$-numbers)]\label{def:KLcondition}
 We say that an irrational number $\omega$ is \emph{upper-$\KL^\dag$} with constants $T_+ > 0$ and $N \in \mathbb{N}$ if the following inequality holds for all $n \geqslant N$:
 \begin{equation}
  M_n^\dag(\omega) \leqslant e^{(\kappa^\dag + T_+)n}.
 \end{equation}
 Similarily, a number is \emph{lower-$\KL^\dag$} with constants $T_- > 0$ and $N \in \mathbb{N}$ if for all $n \geqslant N$ we have
 \begin{equation}
   e^{(\kappa^\dag - T_-)n} \leqslant M_n^\dag(\omega).
 \end{equation}

 We denote the sets formed by the numbers $\omega$ with the above properties by, respectively, $\KL^{\dag+}(T_+,N)$ and $\KL^{\dag-}(T_-,N)$. By $\KL^\dag(T_-, T_+, N)$ we understand the intersection of $\KL^{\dag+}(T_+, N)$ and $\KL^{\dag-}(T_-, N)$ and we will write $\KL^\dag(T,N)$ short for $\KL^\dag(T,T,N)$ where $T > 0$.
\end{definition}

Brjuno and semi-Brjuno numbers form a full measure set, while Diophantine numbers with fixed parameters form a set, whose measure is close to full. In \cite{kamienski:2018-khintchine-levy} we prove this is also the case for $\KL$-numbers:
\begin{theorem}[\cite{kamienski:2018-khintchine-levy}]
 Suppose that $T > 0$ and $N \in \mathbb{N}$. Then the Gauss measure of $\KL(T,N)$ satisfies
 \begin{equation}
  \gamma(\KL(T,N)) \geqslant 1 - O\left(\sqrt{N} \cdot \Xi(T)^{\sqrt{N}}\right)
 \end{equation}
 with explicitly computed $0 < \Xi(T) < 1$ and constants in $O(\ldots)$. In particular for any fixed $T$ there is a large enough $N$ that brings $\gamma(\KL(T,N))$ arbitrarily close to $1$.  
\end{theorem}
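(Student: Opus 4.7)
Let $S_n(\omega) := \log M_n(\omega) = \sum_{j=0}^{n-1} (\log a_1) \circ G^j(\omega)$, where $G$ is the Gauss map. By a union bound,
\[
 1 - \gamma(\KL(T,N)) \leq \sum_{n \geq N} \gamma\bigl\{|S_n - n\kappa| > nT\bigr\},
\]
so the task reduces to a quantitative large-deviations estimate for the ergodic sums of $\log a_1$, uniform in $n \geq N$. The immediate obstacle is that $\log a_1$ has no exponential moments under $\gamma$ -- since $\gamma\{a_1 = k\} \asymp k^{-2}$ -- which rules out a direct Chernoff approach. The overall plan is therefore: (i) truncate, (ii) apply a sharp exponential concentration bound to the truncated observable, (iii) balance truncation error against concentration rate.

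For a cutoff $K > 0$ to be chosen as a function of $n$, set $\phi_K := \log \min(a_1, K)$ and $\tilde S_n^{(K)} := \sum_{j=0}^{n-1} \phi_K \circ G^j$. The truncated mean $\kappa_K := \int \phi_K \, d\gamma$ satisfies $|\kappa - \kappa_K| = O((\log K)/K)$ (by the $k^{-2}$ tail), and I would split
\[
 \gamma\{|S_n - n\kappa| > nT\} \leq \gamma\bigl\{|\tilde S_n^{(K)} - n\kappa_K| > nT/2\bigr\} + \gamma\bigl\{\exists\, j \leq n : a_j > K\bigr\},
\]
with the second (truncation) term dominated by $O(n/K)$. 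For the first term I would invoke the spectral gap of the Gauss--Kuzmin--Wirsing transfer operator on a Banach space of regular functions (e.g.\ $\mathrm{BV}$), which yields exponential decay of correlations for $\phi_K$ and thereby a Bernstein-type inequality
\[
 \gamma\bigl\{|\tilde S_n^{(K)} - n\kappa_K| > nT/2\bigr\} \leq A \exp\bigl(-c\, nT / \log K\bigr),
\]
the $\log K$ in the denominator reflecting the $L^\infty$-bound of $\phi_K$ and $A,c$ being universal constants inherited from the spectral gap.

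Balancing the two contributions suggests the choice $K = K(n) := e^{\sqrt{n}}$, which makes the truncation term $O(n e^{-\sqrt{n}})$ and the concentration term $O(\exp(-cT\sqrt{n}))$; summing over $n \geq N$ produces a tail of order $\sqrt{N}\cdot e^{-c'T\sqrt{N}}$, matching the $\sqrt{N}\cdot\Xi(T)^{\sqrt{N}}$ shape with the explicit $\Xi(T) := e^{-c'T} \in (0,1)$. The lower-$\KL$ half of the estimate is symmetric and in fact easier, since $\log a_j \geq 0$ removes the need for a lower truncation. The principal technical step I expect to be the main obstacle is establishing the Bernstein-type inequality above with the \emph{sharp} rate $nT/\log K$, rather than the $nT^2/(\log K)^2$ that a naive Hoeffding-type bound would yield: it is precisely this sharper dependence that forces a $\sqrt{N}$ in the final exponent (a Hoeffding-type rate would give only $N^{1/3}$), and it is where the majority of the work in \cite{kamienski:2018-khintchine-levy} would be concentrated.
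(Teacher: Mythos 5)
This theorem is not proved in the present paper at all: it is imported verbatim from the companion paper \cite{kamienski:2018-khintchine-levy}, so there is no in-paper argument to compare your proposal against. Judged on its own terms, your outline is structurally sound and, importantly, reproduces the exact shape $\sqrt{N}\cdot\Xi(T)^{\sqrt{N}}$ of the stated bound: the union bound over $n\geqslant N$, the truncation at $K(n)=e^{\sqrt{n}}$, the $O(n/K)$ tail from $\gamma\{a_1>k\}\asymp k^{-2}$, and the balancing of $e^{-\log K}$ against $e^{-cnT/\log K}$ are all correct, and your observation that a Hoeffding-type rate would only yield $N^{1/3}$ in the exponent is the right diagnosis of where the difficulty sits.

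Two caveats. First, the entire weight of the argument rests on the Bernstein-type inequality with variance (not sup-norm squared) in the denominator for Birkhoff sums of the Gauss map, and this is invoked rather than established. Such inequalities do exist for exponentially $\psi$-mixing sequences (and the partial-quotient sequence is one), but the off-the-shelf versions for mixing processes often carry $(\log n)^2$ losses in the denominator of the exponent, which would degrade $e^{-cT\sqrt{n}}$ to $e^{-cT\sqrt{n}/\log n}$ and break the stated form; moreover the theorem claims \emph{explicit} constants and an explicit $\Xi(T)$, so one needs a concentration inequality with tracked constants, not an abstract one. Second, your premise that $\log a_1$ "has no exponential moments" is imprecise: $\int a_1^t\,\integrald\gamma<\infty$ for all $t<1$, so a weighted-transfer-operator (Chernoff/Cram\'er) argument is in principle available and would give a genuinely exponential bound $O(e^{-cN})$, strictly stronger than the stated $\Xi(T)^{\sqrt{N}}$. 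That the theorem only asserts a stretched-exponential rate is circumstantial evidence that the cited paper follows a truncation route like yours, but it also means your opening justification for why truncation is \emph{forced} is not quite right.
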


\subsection{Results in the semi-Brjuno case}

\begin{theorem}\label{thm:general-estimate}
 Suppose $\omega \in \mathbb{X}, \varrho > 0$ and $a \in \mathcal{P}_0(\varrho)$. Let $\delta \in (0, \varrho)$ and put $\Delta := (1+\omega)\delta$. The solution $g$ of the cohomological equation
 \begin{equation}
  (\partial_x + \omega \partial_y) g(x,y) = a(x,y)
 \end{equation}
 exists and satisfies $g \in \mathcal{P}_0(\varrho - \delta)$ provided that $\omega \in \SemiBrjuno(\Delta)$. Moreover, the estimate on $||g||_{\varrho - \delta}$ is given by
 \begin{equation}
  ||g||_{\varrho - \delta} \leqslant \Gamma(\delta) \cdot ||a||_\varrho,
 \end{equation}
 where $\Gamma(\delta)$ is given by
 \begin{equation}\label{eq:Gamma-of-delta}
  \Gamma(\delta) = (2 + O(\delta)) \cdot \Brj(\Delta) + G_\mathsf{ConstType} \cdot \delta^{-2} + G_\mathsf{Away} \cdot \delta^{-1} \log (\delta^{-1})
 \end{equation}
 and $G_\mathsf{Away}$ and $G_\mathsf{ConstType}$ are absolute constants with $O(\delta)$ increments given by \eqref{eq:GAway} and \eqref{eq:GGood}, respectively.
\end{theorem}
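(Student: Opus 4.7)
The plan is to proceed through Fourier analysis on the torus. Writing $a(x,y) = \sum_{(k,l) \ne (0,0)} \hat a_{k,l}\, e^{i(kx + ly)}$ (the zero mode being absent because $\avg a = 0$), Paley--Wiener bounds give $|\hat a_{k,l}| \leqslant \|a\|_\varrho\, e^{-\varrho(|k|+|l|)}$, while the cohomological equation forces $\hat g_{k,l} = \hat a_{k,l}/\bigl(i(k + \omega l)\bigr)$ mode by mode. The problem thus reduces to bounding the explicit series
\[ S(\delta) := \sum_{(k,l) \ne (0,0)} \frac{e^{-\delta(|k|+|l|)}}{|k + \omega l|} \]
by the expression \eqref{eq:Gamma-of-delta} for $\Gamma(\delta)$. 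Once this is achieved, uniform convergence of the Fourier series for $g$ on $\overline{\Pi(\varrho - \delta)^2}$ will yield $g \in \mathcal{P}_0(\varrho - \delta)$, with reality, $2\pi$-periodicity and vanishing-average conditions inherited mode by mode from $a$.

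The key idea is to partition the index set by proximity of $(k,l)$ to the resonant line $k + \omega l = 0$. For each $l \ne 0$ set $k^\star(l) = -\nint(\omega l)$, so that $|k^\star(l) + \omega l|$ equals the distance from $\omega l$ to its nearest integer; for any other $k$, as well as for $(k,0)$ with $k \ne 0$, one has $|k + \omega l| \geqslant 1/2$. This already singles out an \emph{$\Away$ block} of non-resonant indices whose contribution is a straightforward geometric-type double sum, leading to the $G_\Away \cdot \delta^{-1} \log(\delta^{-1})$ piece. Among the resonant pairs $(k^\star(l), l)$ I split further by the position of $|l|$ in the sequence of denominators $q_n$: a \emph{$\Good$ (constant-type-like) block} collecting those $n$ for which $q_{n+1}$ is only moderately larger than $q_n$, so that $|k^\star(l) + \omega l| \gtrsim 1/|l|$ and the sum $\sum |l|\, e^{-(1+\omega)\delta|l|} \lesssim \delta^{-2}$ produces the $G_\Good \cdot \delta^{-2}$ term; and a \emph{$\Bad$ block} collecting indices with $a_{n+1}$ large, where the Brjuno-like functions become unavoidable. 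The observation $|k^\star(l)| + |l| \approx (1+\omega)|l|$ is exactly what motivates the definition $\Delta = (1+\omega)\delta$ and produces the correct exponent in the Brjuno-like series.

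The heart of the proof is the estimate on the $\Bad$ block. By the three-distance theorem, among $l \in [q_n, q_{n+1})$ the genuinely dangerous values are the multiples $l = m q_n$, $1 \leqslant m \leqslant a_{n+1}$, where the distance from $\omega l$ to the nearest integer equals $m$ times that of $\omega q_n$ and is thus of order $m/q_{n+1}$, while all other $l$'s contribute only through the baseline bound $\gtrsim 1/q_{n+1}$ without the $m$-amplification. Including the symmetric $l < 0$ contributions and the secondary resonances associated with $q_{n-1}$-shifts, the $n$-th block is bounded schematically by
\[ c_1\, q_{n+1}\, e^{-\Delta q_n} + c_2\, q_{n+1} \sum_{m=2}^{a_{n+1}} \frac{e^{-\Delta m q_n}}{m} \leqslant c_1\, q_{n+1}\, e^{-\Delta q_n} + c_2\, q_{n+1}\, e^{-2\Delta q_n} \log a_{n+1}, \]
the decisive trick being to factor $e^{-2\Delta q_n}$ out of the $m \geqslant 2$ tail -- this is the origin of the ``$2\Delta$'' appearing in $\Brj_2(2\Delta)$. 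Summing over $n$ assembles these contributions into a constant multiple of $\Brj(\Delta) = 2\Brj_1(\Delta) + \Brj_2(2\Delta)$, and convergence of $S(\delta)$ then follows at once from $\omega \in \SemiBrjuno(\Delta)$. I expect the main obstacle to be the Ostrowski-style bookkeeping of this last step: making sure each $l$ with small distance-to-nearest-integer of $\omega l$ is captured by at most one block $n$, tracking the multiplicities in the three-distance estimate carefully enough that the leading constant in front of $\Brj(\Delta)$ comes out to be exactly $(2 + O(\delta))$ rather than something larger, and controlling the $O(\delta)$ boundary corrections arising from transitions such as $\sum e^{-\delta|l|} \to \delta^{-1} + O(1)$ so that they assemble into the clean form advertised in \eqref{eq:Gamma-of-delta}.
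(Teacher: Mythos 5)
Your overall architecture coincides with the paper's: Fourier reduction to the number-theoretic series, a trichotomy of indices into a non-resonant block, a ``constant-type'' block with denominators $\gtrsim 1/|l|$, and a resonant block of multiples of the $q_n$ producing the Brjuno-like functions, including the $e^{-2\Delta q_n}$ factoring trick for the $m\geqslant 2$ tail. The Away and Good computations are fine in outline. The gap is in how you justify the critical-strip dichotomy. The statement you actually need — and which the paper extracts from Legendre's theorem (Theorem \ref{thm:legendre}) — is pointwise in $l$: every $l$ in the critical strip with $\|l\omega\|<1/(2l)$ is of the form $l=aq_k$ with $a$ bounded (in fact $a\leqslant a^*_{k+1}\approx\sqrt{a_{k+1}/2}$), so that \emph{every other} $l$ satisfies $\|l\omega\|\geqslant 1/(2l)$ and belongs to the Good block. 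Your split is instead organized blockwise by $n$ (``$a_{n+1}$ small'' vs.\ ``$a_{n+1}$ large''), and for the non-multiple $l\in[q_n,q_{n+1})$ inside a bad block you invoke only ``the baseline bound $\gtrsim 1/q_{n+1}$''. That bound is fatal: there are $\sim q_{n+1}$ such $l$, each contributing $\sim q_{n+1}e^{-\Delta l}$, so the block sums to $\sim \Delta^{-1}q_{n+1}e^{-\Delta q_n}$ and the total is $\Delta^{-1}\Brj_1(\Delta)$, an extra factor $\delta^{-1}$ that the theorem does not allow. The three-distance theorem, which describes gap lengths of $\{l\omega \bmod 1\}$, does not by itself deliver the lower bound $\|l\omega\|\geqslant 1/(2l)$ for the non-special indices; you must route these through Legendre (or the best-approximation machinery) into the $\delta^{-2}$ block.

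Two smaller points. First, your dangerous multiples run up to $m\leqslant a_{n+1}$, whereas Legendre restricts them to $a\leqslant a^*_{n+1}\approx\sqrt{a_{n+1}/2}$; with your range the harmonic sum is $\log a_{n+1}$ rather than $\tfrac12\log a_{n+1}$, and the $\tfrac12$ is exactly what cancels the factor $2$ from $\|q_n\omega\|^{-1}<2q_{n+1}$ to yield the coefficient $1+O(\delta)$ of $\Brj_2(2\Delta)$ inside $\Sigma(\Bad)\leqslant 2[(2+O(\delta))\Brj_1(\Delta)+(1+O(\delta))\Brj_2(2\Delta)]=(2+O(\delta))\Brj(\Delta)$. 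With $m\leqslant a_{n+1}$ you get only $(4+O(\delta))$ on the $\Brj_2$ part, which does not match \eqref{eq:Gamma-of-delta}. Second, the identity $\nint(mq_n\omega)=mp_n$ (so that the resonant denominator is exactly $m\|q_n\omega\|$) requires $m\|q_n\omega\|<1/2$ and is not automatic for small $n$; the paper devotes a separate lemma to verifying it over the admissible range of $a$.
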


\subsection{Results in the Diophantine case}

Since theorem \ref{thm:general-estimate} provides an upper bound on the norm of the solution of the cohomological equation mainly in terms of $\Brj_1(\Delta)$ and $\Brj_2(2\Delta)$ we focus on estimating the Brjuno-like functions.

\begin{theorem}\label{thm:brj12-for-diophantine}
 Suppose $\omega \in \mathbb{X}$ is $(C, \tau)-$Diophantine with $C > 0$ and $\tau \geqslant 1$. Let $\delta > 0$ and put $\Delta = (1+\omega)\delta$. Suppose $\Delta$ is small enough, that is $\Delta \leqslant \min \{ \tau^{-1}, \tau e^{-1} \}$. Then the Brjuno-like functions $\Brj_1(\Delta)$ and $\Brj_2(\Delta)$ can be estimated by
 \begin{equation}\label{eq:Brj1-estimate-diophantine}
  \Brj_1(\Delta) \leqslant {C^{-1} (\tau e^{-1})^\tau \over \log \varphi} \cdot \Delta^{-\tau} \cdot P_1( \log(\Delta^{-1}) )
 \end{equation}
 and
 \begin{equation}\label{eq:Brj2-estimate-diophantine}
  \Brj_2(\Delta) \leqslant {C^{-1} (\tau - 1) (\tau e^{-1})^\tau \over \log \varphi} \cdot \Delta^{-\tau} \cdot P_2( \log(\Delta^{-1}) ),
 \end{equation}
 where $P_1$ and $P_2$ are polynomials of degree $1$ and $2$, respectively, with leading coefficients equal to $1$:
 \begin{equation}
  P_1(X) = X + G_1^{(0)}, \qquad P_2(X) = X^2 + G_2^{(1)} X + G_2^{(0)}.
 \end{equation}
 The lower order coefficients are given by
 \begin{align}
  \begin{split}
   G_1^{(0)} &= \log(3\tau\varphi) + {\GammaEul(\tau) \over 2 (\tau e^{-1})^\tau}, \qquad G_2^{(1)} = {C \log (C^{-1}) \over \tau - 1} + {1 \over 2} {\GammaEul(\tau) \over (\tau e^{-1})^\tau} + \log(3\varphi(\tau+1)^2),\\
   G_2^{(0)} &= {C \log (C^{-1}) \over \tau - 1} \cdot \left( \log(3\varphi\tau) + {\GammaEul(\tau) \over 2 (\tau e^{-1})^\tau} \right) + \log(3\varphi(\tau+1)) \log(\tau+1) + {\GammaEul'(\tau) \over (\tau e^{-1})^\tau}.
  \end{split}
 \end{align}
\end{theorem}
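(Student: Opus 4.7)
The starting point is to translate the Diophantine hypothesis into a polynomial bound on $q_{n+1}$: the standard inequality $|q_n\omega - p_n| \leqslant 1/q_{n+1}$ combined with \eqref{eq:dioph-condition} gives $q_{n+1} \leqslant C^{-1} q_n^\tau$, from which $a_{n+1} \leqslant q_{n+1}/q_n \leqslant C^{-1} q_n^{\tau-1}$ and hence $\log a_{n+1} \leqslant (\tau - 1)\log q_n + \log C^{-1}$. Substituting these into \eqref{eq:brjuno-functions-definition} reduces the problem to estimating the auxiliary sums
\begin{equation*}
  S_0 := \sum_{n \geqslant 1} q_n^\tau e^{-q_n \Delta}, \qquad S_1 := \sum_{n \geqslant 1} q_n^\tau e^{-q_n \Delta} \log q_n,
\end{equation*}
since $\Brj_1(\Delta) \leqslant C^{-1} S_0$ and $\Brj_2(\Delta) \leqslant C^{-1}(\tau-1) S_1 + C^{-1}\log(C^{-1}) S_0$.

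The function $h(q) = q^\tau e^{-q\Delta}$ attains its maximum $h_{\max} = (\tau e^{-1})^\tau \Delta^{-\tau}$ at $q_\ast = \tau/\Delta$, and is increasing on $[0, q_\ast]$ and decreasing on $[q_\ast, \infty)$. I would split $S_0$ at the largest index $N^\ast$ with $q_{N^\ast} \leqslant q_\ast$. In the sub-peak range $n \leqslant N^\ast$ each term of $S_0$ is bounded by $h_{\max}$, and the Fibonacci-type growth $q_n \geqslant \varphi^{n-1}$ forces $N^\ast \leqslant 1 + \log(\tau/\Delta)/\log\varphi$, producing the leading $\log(\Delta^{-1})/\log\varphi$ piece of $P_1$. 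In the tail range $n > N^\ast$, $h$ is decreasing and $q_n \geqslant q_{N^\ast+1}\varphi^{n - N^\ast - 1}$; dominating the sum along this geometric progression by the integral $\int_0^\infty q^{\tau-1}e^{-q\Delta}\,dq = \GammaEul(\tau)\Delta^{-\tau}$ supplies, after the normalization in \eqref{eq:Brj1-estimate-diophantine}, the $\GammaEul(\tau)/(2(\tau e^{-1})^\tau)$ piece of $G_1^{(0)}$.

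The bound for $S_1$ proceeds along identical lines, the only difference being the extra factor $\log q_n$. In the sub-peak range one uses $\log q_n \leqslant \log q_\ast = \log\tau + \log\Delta^{-1}$, yielding the leading $(\log\Delta^{-1})^2/\log\varphi$ piece of $P_2$. In the tail range the relevant integral, by the substitution $u = q\Delta$, is
\begin{equation*}
  \int_0^\infty q^{\tau-1} e^{-q\Delta} \log q \, dq \;=\; \Delta^{-\tau}\bigl(\GammaEul'(\tau) + \GammaEul(\tau)\log\Delta^{-1}\bigr),
\end{equation*}
accounting simultaneously for the $\GammaEul'(\tau)/(\tau e^{-1})^\tau$ piece of $G_2^{(0)}$ and the additional $\GammaEul(\tau)/(2(\tau e^{-1})^\tau)$ piece of $G_2^{(1)}$. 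The remaining term $C^{-1}\log(C^{-1}) S_0$ is then controlled by the $S_0$ estimate already derived and is responsible for the $C\log(C^{-1})/(\tau-1)$ summands of $G_2^{(1)}$ and $G_2^{(0)}$.

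The main technical obstacle is not the architecture but the bookkeeping: matching the constants $\log(3\tau\varphi)$, $\log(3\varphi(\tau+1))$, $\log(3\varphi(\tau+1)^2)$ exactly requires careful tracking of rounding in the choice of $N^\ast$, of the slack in $q_n \geqslant \varphi^{n-1}$ for small $n$, and of the role of the hypotheses $\Delta \leqslant \tau^{-1}$ and $\Delta \leqslant \tau/e$, which respectively bound $\log(\tau/\Delta)$ by $2\log\Delta^{-1}$ and ensure $N^\ast \geqslant 1$ so that the peak/tail decomposition is meaningful. No new ideas beyond the peak/tail split and the two elementary integral identities are needed, but the algebra to pin down every summand is delicate.
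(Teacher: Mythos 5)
Your architecture is the paper's: translate the Diophantine condition into $q_{n+1}\leqslant C^{-1}q_n^\tau$ and $\log a_{n+1}\leqslant (\tau-1)\log q_n+\log(C^{-1})$, reduce to the two auxiliary sums (the paper's $\Dph_1,\Dph_2$), and estimate each by a peak/tail split driven by the Fibonacci lower bound on $q_n$ together with an integral comparison. Two specific ingredients are missing, and both are needed to land on the stated constants rather than merely on bounds of the same shape.

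First, the factor $\tfrac12$ multiplying $\GammaEul(\tau)$ in $G_1^{(0)}$ and $G_2^{(1)}$ does not come out of the full integral $\int_0^\infty q^{\tau-1}e^{-q\Delta}\,dq=\GammaEul(\tau)\Delta^{-\tau}$ that you invoke. The paper's tail integral, after the substitution $y=q\Delta$, starts at $y=\tau$ (the tail begins at the peak $q_\ast=\tau/\Delta$), and the halving comes from the nontrivial fact that the median of the Gamma$(\tau,1)$ distribution lies below $\tau$ (Chen--Rubin), whence $\int_\tau^\infty y^{\tau-1}e^{-y}\,dy\leqslant\tfrac12\GammaEul(\tau)$. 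Integrating from $0$ as you propose loses this factor of $2$ on that piece. Second, for $S_1$ you cannot split at $q_\ast=\tau/\Delta$: the function $t\mapsto t^\tau e^{-t\Delta}\log t$ is still increasing just past $\tau/\Delta$, so the monotone comparison $\mathfrak{D}_2(q_n)\leqslant\mathfrak{D}_2(\text{lower bound on }q_n)$ fails on a short interval beyond the split point. The paper pushes the split out to $(\tau+1)\Delta^{-1}$, and the hypothesis $\Delta\leqslant\min\{\tau^{-1},\tau e^{-1}\}$ is used precisely to guarantee the maximum of $\mathfrak{D}_2$ falls below that threshold; this is also the source of every $(\tau+1)$ in $G_2^{(1)}$ and $G_2^{(0)}$, which your split at $\tau/\Delta$ would not produce. (Minor remark: your intermediate bound $C^{-1}\log(C^{-1})S_0$ is the arithmetically correct one, yet it yields $\log(C^{-1})/(\tau-1)$ rather than the theorem's $C\log(C^{-1})/(\tau-1)$ after normalization; the paper's own displayed reduction writes $\log(C^{-1})\Dph_1$ without the $C^{-1}$, and its final constants match that version, so this discrepancy appears to originate in the paper.)
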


Note that in the special case of $\tau = 1$ the estimates for $\Brj_2(\Delta)$ are not properly expressed when written in the form \eqref{eq:Brj2-estimate-diophantine} as $\tau - 1$ is somewhat artificially taken out to stand in front of $P_2(\log(\Delta^{-1}))$ and there appear divisions by $\tau - 1$ in $G_2^{(1)}$ and $G_2^{(0)}$. In this case one should include the $\tau - 1$ factor in $P_2$ so that it cancels out with the denominators in the coefficients. As a result $P_2$ will become a polynomial of degree $1$: $P_2(X) = C \log (C^{-1}) \cdot X + \log(3\varphi) + e/2$. This, after cancellations, makes the leading constant (i.e. the one at $\Delta^{-\tau} \log (\Delta^{-1})$) equal to $e^{-1} \log (C^{-1}) / \log \varphi$.

\subsection{Results in the Khintchine-L\'evy case}

We provide estimates for $\Brj_1(\Delta)$ and $\Brj_2(\Delta)$ whenever
\begin{equation}
 \omega \in \KLB(T_-, T_+, N) := \KL'^+(T_+,N) \cap \KL^-(T_-,N).
\end{equation}
We choose this set because of the form of the addends in $\Brj_1$ and $\Brj_2$ and the fact that $M_n < q_n < M_n'$ for all $n$. Indeed, we will require estimates on $q_n$ from below because of the $e^{-q_n\Delta}$ term and from above because of the $q_{n+1}$ term, which are provided by $\KL^-$ and $\KL'^+$, respectively. Similarly the term $\log a_{n+1} = \log (M_{n+1} / M_n)$ will use the estimate on $M_{n+1}$ from above and on $M_n$ from below.\footnote{Since $M_{n+1} < M_{n+1}'$ we do not additionaly assume that $\omega \in \KL^+(T)$ for some $T$ and use $\omega \in \KL'^+(T_+)$ instead.} 

To make the notations more compact we denote
\begin{equation}
 \beta = \kappa - T_-, \qquad \beta' = \kappa' + T_+, \qquad \gamma = \beta'/\beta.
\end{equation}

\begin{theorem}[Estimates of the Brjuno-like functions in the particular case of $\KL$ frequencies]\label{thm:brjuno-estimates}
 Suppose that $\omega \in \KLB(T_-, T_+, N)$ with some $T_-, T_+ > 0$ and $N \in \mathbb{N}$ and let $\Delta > 0$. The sizes of $\Brj_1(\Delta)$ and $\Brj_2(\Delta)$ satisfy
 \begin{align}\label{eq:Brj-estimates-for-omega-KLBrj}
 \begin{split}
  \Brj_1(\Delta) &\leqslant G_{\KLB1} \cdot \Delta^{-\gamma} + O(1) \\
  \Brj_2(\Delta) &\leqslant G_{\KLB21} \cdot \Delta^{-\gamma} + G_{\KLB22} \cdot \Delta^{-\gamma} \log \left( \Delta^{-1} \right) + O(1)
 \end{split}
 \end{align}
 where the constants in the inequalities are given by $G_{\KLB1} = e^{\beta'} ( \beta^{-1} \GammaEul(\gamma) + (\gamma e^{-1} )^\gamma ), G_{\KLB22} = (T_+ + T_-) \cdot e^{\beta '} ( ( \gamma e^{-1} )^\gamma + \beta^{-2} \GammaEul(\gamma) )$ and $G_{\KLB21} = e^{\beta'} ((T_+ + T_-) \cdot ( \beta^{-1} \GammaEul(\gamma) + (\gamma e^{-1})^\gamma) + (\gamma e^{-1} )^\gamma \log(2\gamma) + \beta^{-1} \GammaEul'(\gamma) )$.
 
 The $O(1)$ terms depend only on the $N-1$ initial partial quotients of $\omega$ and are given by formulae \eqref{eq:brjfindiff} with $m = N$.
\end{theorem}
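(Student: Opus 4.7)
The plan is to exploit the $\KLB$ hypothesis to replace the convergent denominators $q_n$ and $q_{n+1}$ appearing in $\Brj_1(\Delta)$ and $\Brj_2(\Delta)$ by the exponentials $e^{\beta n}$ and $e^{\beta'(n+1)}$ (via the classical inequalities $M_n \leqslant q_n < M_n'$), and then to compare the resulting infinite series to integrals which, under an exponential substitution, reduce to the Euler $\GammaEul$ function.

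First I would split each series at $n = N$. The finite head $n = 1, \ldots, N-1$ depends only on $a_1, \ldots, a_{N-1}$ and contributes the $O(1)$ term in \eqref{eq:Brj-estimates-for-omega-KLBrj}, to be written out explicitly as \eqref{eq:brjfindiff} with $m = N$. For $n \geqslant N$ the hypothesis $\omega \in \KL^{-}(T_-, N)$ yields $q_n \geqslant M_n \geqslant e^{\beta n}$ while $\omega \in \KL'^{+}(T_+, N)$ yields $q_{n+1} \leqslant M_{n+1}' \leqslant e^{\beta'(n+1)}$. For the extra $\log a_{n+1}$ factor in $\Brj_2$ I would use $\log a_{n+1} = \log(M_{n+1}/M_n) \leqslant \log(M_{n+1}'/M_n')$ together with $M_n' \geqslant M_n \geqslant e^{\beta n}$ to obtain $\log a_{n+1} \leqslant \beta'(n+1) - \beta n$, which is linear in $n$.

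For $\Brj_1$ the tail then becomes $e^{\beta'} \sum_{n \geqslant N} f(n)$ with $f(x) := \exp(\beta' x - e^{\beta x}\Delta)$. A direct computation shows $f$ is unimodal on $\mathbb{R}_+$, attaining its maximum at $x^\star = \beta^{-1}\log(\gamma/\Delta)$ with $f(x^\star) = (\gamma e^{-1})^\gamma \Delta^{-\gamma}$. Unimodality justifies the standard comparison
\begin{equation*}
 \sum_{n \geqslant N} f(n) \leqslant f(x^\star) + \int_0^\infty f(x)\,\integrald x,
\end{equation*}
and the substitution $u = e^{\beta x}\Delta$ converts the integral into $\beta^{-1}\Delta^{-\gamma} \int_\Delta^\infty u^{\gamma - 1} e^{-u}\,\integrald u \leqslant \beta^{-1}\GammaEul(\gamma)\Delta^{-\gamma}$. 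Summing the two pieces and multiplying by $e^{\beta'}$ recovers exactly $G_{\KLB1} \Delta^{-\gamma} + O(1)$.

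The same recipe applies to $\Brj_2$, now with the weighted sum $e^{\beta'} \sum_{n \geqslant N}(\beta' + (\beta'-\beta)n) f(n)$. The additional linear-in-$n$ factor is what produces the $\log(\Delta^{-1})$ growth: under $u = e^{\beta x}\Delta$ one has $x = \beta^{-1}(\log u - \log \Delta)$, so integrating $(\beta'-\beta)\,x\,f(x)$ yields contributions proportional to both $\beta^{-2}\GammaEul(\gamma)\Delta^{-\gamma}\log(\Delta^{-1})$ and $\beta^{-2}\GammaEul'(\gamma)\Delta^{-\gamma}$, while the maximum-term evaluation supplies the $(\gamma e^{-1})^\gamma$ pieces inside $G_{\KLB21}$ and $G_{\KLB22}$. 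The main obstacle I foresee is careful constant bookkeeping: reassembling each of the natural coefficients $\beta$, $\beta'$, $\gamma$ into exactly the form $G_{\KLB1}, G_{\KLB21}, G_{\KLB22}$ stated in the theorem, in particular matching the $T_+ + T_-$ prefactor in $G_{\KLB22}$ arising from the $\log a_{n+1}$ weight, and verifying that the sum-to-integral comparison absorbs only $O(1)$ error so that the initial-partial-quotient dependence is cleanly localized in the explicit head given by \eqref{eq:brjfindiff}.
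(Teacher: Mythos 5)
Your proposal follows essentially the same route as the paper: replace $q_n$, $q_{n+1}$ and $\log a_{n+1}$ by $e^{\beta n}$, $e^{\beta'(n+1)}$ and $\beta'(n+1)-\beta n$ for $n\geqslant N$, isolate the finite head as the $O(1)$ term (the paper phrases it as the difference \eqref{eq:brjfindiff} between the actual and idealized heads, then sums the idealized series over all $n\geqslant 1$), and bound the resulting unimodal tail series by its maximal term plus the integral, which the substitution $y=e^{\beta x}\Delta$ turns into $\GammaEul(\gamma)$ and $\GammaEul'(\gamma)$. The one place your bookkeeping would not reproduce the stated constants is the linear weight in the $\Brj_2$ tail: you correctly keep it as $\beta'+(\beta'-\beta)n$, whereas the paper's proof identifies $\beta'-\beta$ with $T_+ + T_-$ (silently dropping $\kappa'-\kappa$), which is exactly where the $(T_+ + T_-)$ prefactors in $G_{\KLB21}$ and $G_{\KLB22}$ originate.
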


\begin{remark}[Numerical values of constants of theorem \ref{thm:brjuno-estimates} for certain parameter values]
 Since we require a lower bound on $q_n$ and this sequence is always bounded from below by the Fibonacci sequence $F_n$ it makes little sense to consider a lower bound on $M_n$ by a sequence smaller than $F_n$. Since $F_n \approx {1 \over \sqrt{5}} \varphi^n$ we will consider $T_-$ for which $F_n \leqslant e^{(\kappa - T_-)n}$, that is $T_- < T_-^{\mathtt{max}} = \kappa - \log \varphi \approx 0.507$.
 
 The particular approximate numerical values of the constants involved in the estimates of theorem \ref{thm:brjuno-estimates} are given in table \ref{table:klbrj-numerical}.
 \begin{table}[h]\centering
  \begin{tabular}[h]{@{\phantom{.}}l||m{0.95cm}||m{0.95cm}|m{0.95cm}|m{0.95cm}|m{0.95cm}||m{0.95cm}|m{0.95cm}|m{0.95cm}||m{0.95cm}|m{0.95cm}|m{0.95cm}}
  \hfill $T_- =$ & \centering $0$ & \multicolumn{4}{c||}{\centering $0.1$} & \multicolumn{3}{c||}{\centering $0.2$} & \multicolumn{3}{c}{\centering $0.5$} \\
  \hline
  \hfill $T_+ =$ & \centering $0$ & \centering $0.1$ & \centering $0.5$ & \centering $1.0$ & \centering $2.0$ & \centering $0.5$ & \centering $1.0$ & \centering $2.0$ & \centering $0.5$ & \centering $1.0$ & \hspace{1.2mm} $2.0$  \\
  \hline \hline
  $G_{\KLB1\phantom{1}} \approx$ & $5.3\verb'e00'$ & $6.7\verb'e00'$ & $1.2\verb'e01'$ & $3.0\verb'e01'$ & $2.8\verb'e02'$ & $1.6\verb'e01'$ & $4.6\verb'e01'$ & $5.8\verb'e02'$ & $1.0\verb'e02'$ & $7.1\verb'e02'$ & $6.6\verb'e04'$ \\
  \hline
  $G_{\KLB21} \approx$  & $1.6\verb'e00'$ & $4.8\verb'e00'$ & $1.8\verb'e01'$ & $6.8\verb'e01'$ & $1.0\verb'e03'$ & $2.6\verb'e01'$ & $1.1\verb'e02'$ & $2.2\verb'e03'$ & $2.5\verb'e02'$ & $2.3\verb'e03'$ & $3.1\verb'e05'$  \\
  \hline
  $G_{\KLB22} \approx$  & $0.0\phantom{e00'}$ & $6.2\verb'e-1'$ & $5.2\verb'e00'$ & $3.2\verb'e01'$ & $7.2\verb'e02'$ & $1.0\verb'e01'$ & $6.2\verb'e01'$ & $1.8\verb'e03'$ & $2.2\verb'e02'$ & $2.6\verb'e03'$ & $4.8\verb'e05'$  \\
  \end{tabular}
  \caption{Approximate numerical values of $G_{\KLB1}, G_{\KLB21}$ and $G_{\KLB22}$ for particular $T_-$ and $T_+$ (written in scientific notation)}\label{table:klbrj-numerical}
 \end{table}
\end{remark}

\section{Ideas and heuristics}\label{sec:ideas}

The general estimate on $||g||_{\varrho - \delta}$ is given in the following
\begin{lemma}[Convergence of the formal solution]\label{lem:cohomological-estimates-norm-times-series}
  Suppose that $\varrho > 0$, $\delta \in (0, \varrho)$ and that $a \in \mathcal{P}_0(\varrho)$. The cohomological equation with initial data $a$, i.e.
  \begin{equation}
   (\partial_x + \omega \partial_y) g(x,y) = a(x,y)
  \end{equation}
  has a unique solution within the class of zero-mean formal Fourier series\footnote{i.e. not necessarily convergent Fourier series with $g_{0,0} = 0$}.
  
  If additionally the infinite series $\ser(\delta)$ given by
  \begin{equation}
    \ser(\delta) = \sum_{(p,q)\in\mathbb{Z}^2_*} L(q,p)
  \end{equation}
  where each summand $L(q,p)$ is defined as
  \begin{equation}
    L(q,p) = {e^{-(|p|+|q|)\delta} \over |q\omega - p|}
  \end{equation}
  converges, then this formal solution satisfies $g \in \mathcal{P}_0(\varrho - \delta)$ and the following estimate on the norm of $g$ is valid:
  \begin{equation}\label{eq:general-estimate}
    ||g||_{\varrho - \delta} \leqslant ||a||_\varrho \cdot \ser(\delta).
  \end{equation}
\end{lemma}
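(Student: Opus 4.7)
The plan is to carry out the standard Fourier-series approach. First I would expand $a$ in its Fourier series $a(x,y) = \sum_{(p,q)\in\mathbb{Z}^2_*} a_{p,q} e^{i(px+qy)}$, noting that the $(0,0)$-mode vanishes because $\avg a = 0$. Writing a formal series $g(x,y) = \sum g_{p,q} e^{i(px+qy)}$ and applying $\partial_x + \omega\partial_y$ term-by-term turns the PDE into the algebraic identity $i(p+\omega q) g_{p,q} = a_{p,q}$. Since $\omega \in \mathbb{X}$ is irrational, $p + \omega q \neq 0$ whenever $(p,q) \neq (0,0)$, so the formal solution is uniquely determined by $g_{p,q} = a_{p,q}/(i(p+\omega q))$ together with $g_{0,0}=0$. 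Uniqueness within the class of zero-mean formal Fourier series is then immediate: the difference of any two solutions has Fourier coefficients satisfying $i(p+\omega q) h_{p,q}=0$, forcing $h_{p,q}=0$.

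Next I would handle the analytic estimate. The key ingredient is the standard bound on Fourier coefficients of functions in $\mathcal{P}_0(\varrho)$: by shifting the contour of integration in each variable to $\pm i\varrho$ (sign chosen according to $\sgn p$ and $\sgn q$) one obtains $|a_{p,q}| \leqslant \|a\|_\varrho \, e^{-(|p|+|q|)\varrho}$. Combining this with $|g_{p,q}| = |a_{p,q}|/|p+\omega q| = |a_{p,q}|/|q\omega - p|$ yields
\begin{equation*}
 |g_{p,q}| \, e^{(|p|+|q|)(\varrho-\delta)} \leqslant \|a\|_\varrho \cdot {e^{-(|p|+|q|)\delta} \over |q\omega - p|} = \|a\|_\varrho \cdot L(q,p).
\end{equation*}
Summing over $(p,q)\in \mathbb{Z}^2_*$ and invoking the hypothesis $\ser(\delta) < \infty$ gives the estimate \eqref{eq:general-estimate} together with absolute and uniform convergence of the Fourier series defining $g$ on the closed polystrip $\overline{\Pi(\varrho-\delta)^2}$.

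Finally I would verify that the limit function $g$ belongs to $\mathcal{P}_0(\varrho-\delta)$ and actually solves the PDE. Uniform convergence of a series of $2\pi$-periodic holomorphic terms on $\overline{\Pi(\varrho-\delta)^2}$ forces $g$ to be continuous on the closure, holomorphic on the interior, $2\pi$-periodic in both variables, and of zero average. Real-valuedness on $\mathbb{R}^2$ follows because the map $a_{p,q}\mapsto a_{p,q}/(i(p+\omega q))$ preserves the reality condition $\overline{c_{p,q}} = c_{-p,-q}$. To justify termwise differentiation I would apply the same coefficient estimate with $\delta$ replaced by any slightly smaller $\delta'$: on $\overline{\Pi(\varrho-\delta)^2}\subset \Pi(\varrho-\delta')^2$ the differentiated series still converges absolutely and uniformly, so the formal identity $(\partial_x+\omega\partial_y) g = a$ holds pointwise.

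The only real subtlety — and what I would flag as the ``main obstacle'' — is being careful about the interplay between the two smoothing parameters when justifying termwise differentiation on the closed strip $\overline{\Pi(\varrho-\delta)^2}$, since the bound $\ser(\delta)<\infty$ alone only controls values, not derivatives. The remedy is the usual Cauchy-estimate trick of inserting an auxiliary $\delta'\in(0,\delta)$; everything else is bookkeeping.
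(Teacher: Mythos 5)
Your proposal is correct and follows essentially the same route as the paper: Fourier diagonalization of $\partial_x+\omega\partial_y$, the exponential decay bound $|a_{p,q}| \leqslant \|a\|_\varrho\, e^{-(|p|+|q|)\varrho}$, and termwise summation against the norm $\|e_{p,q}\|_{\varrho-\delta} = e^{(|p|+|q|)(\varrho-\delta)}$ (you in fact supply more detail than the paper on why the limit lies in $\mathcal{P}_0(\varrho-\delta)$ and solves the PDE). One small slip: with your basis $e^{i(px+qy)}$ the denominator is $|p+\omega q|$, which is \emph{not} equal to $|q\omega-p|$ for a fixed pair $(p,q)$; the identity is restored only after the reindexing $q\mapsto -q$, which is harmless since it preserves $\mathbb{Z}^2_*$ and $|p|+|q|$ and hence the value of $\ser(\delta)$ — the paper sidesteps this by choosing the basis $e^{i(px-qy)}$ from the outset.
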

The exact recipe for arriving at inequality \eqref{eq:general-estimate} is given in subsections \ref{subsec:formal-solution} and \ref{subsec:preliminary-estimates}.

Note that each $L(q,p)$ depends only on the summation index $(q,p)$ and two parameters: $\omega$ and $\delta$. This makes the estimation of $||g||_{\varrho - \delta}$ a purely number-theoretic problem. 

To analyze the size of $\ser(\delta)$ we will partition the set $\mathbb{Z}_*^2$ into a family of subsets of indices and consider the series over each of these subsets. In each case we will have different lower bounds on the denominators $|q\omega - p|$ which will allow us to provide estimates for each of the ``subseries''.

We will essentially distinguish between three types of sets:
\begin{itemize}
 \item indices that are far away from the critical line $p = \omega q$,
 \item ``constant type'' indices within the critical strip $|q\omega - p| < 1$,
 \item indices related to the continued fraction expansion of $\omega$.
\end{itemize}

The first kind of indices will give us the best denominators - ones that are bounded away from zero. This will translate to $|q\omega - p|^{-1}$ being of constant order of magnitude - this way the summands $L(q,p)$ will essentially be exponential in $|q|+|p|$ and therefore their sum will closely resemble a convergent geometric series. More specifically we divide this category of indices into a countable number of strips $\Strip(n)$ indexed by $n \in \mathbb{Z} \setminus \{ -1, 0 \}$. In each of the strips we will have $n < q\omega -p < n+1$ and this way $|q\omega-p|^{-1} \approx |n|^{-1}$ and $p \approx n - q\omega$. The sum over each (half-)strip\footnote{For the purpose of this heuristic demonstration we fix the signs of $q$ and $n$, considering the sums over $q < 0$ and $n > 0$ to avoid dealing with the absolute values in the exponent.} will approximately read
\begin{equation}\label{eq:away-heuristics}
 \sum_{(q,p)\in\Strip(n), q<0} L(q,p) \approx \sum_{q < 0} {1 \over n} e^{-\delta(|q| + |q\omega - n|)} = \sum_{q < 0} {1 \over n} e^{-\delta(-q -q\omega + n)} = {e^{-n \delta} \over n} \sum_{q < 0} e^{q(1+\omega)\delta}.
\end{equation}
The final sum with fixed strip index $n$ is a geometric series of size $O(\delta^{-1})$. Summing over all $n > 0$ will yield an additional $O(\log(\delta^{-1}))$ factor since $\sum_n e^{-n\delta}/n = -\log(1 - e^{-\delta}) = O(\log(\delta^{-1}))$.

By ``constant type'' indices we mean the ones contained within the critical strip $\CritStrip = \{ |q\omega - p| < 1 \}$ which the following theorem of Legendre does not concern:
\begin{theorem}[Legendre, {\cite[Chapter II, \S 5, Theorem 1]{rockett-szusz}}]\label{thm:legendre}
 Suppose that $\omega$ is irrational and $q > 0$ and $p$ are integers. If the inequality
 \begin{equation}\label{eq:ineq-legendre}
  |q\omega - p| < {1 \over 2q}
 \end{equation}
 holds, then $p = \nint (q \omega)$ and $q = a q_k$ for some $k$ and $a$ bounded by $1 \leqslant a < \sqrt{(a_{k+1} + 2)/2}$.
\end{theorem}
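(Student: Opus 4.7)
The argument has three ingredients: the nearest-integer claim, the identification of $q$ as a multiple of a convergent denominator, and the bound on the multiplier. The first follows immediately from $|q\omega - p| < 1/(2q) \leqslant 1/2$ together with $q\omega \notin \mathbb{Z} + 1/2$ (as $\omega$ is irrational): $p$ is then the unique integer within distance $1/2$ of $q\omega$, so $p = \nint(q\omega)$. For the factorization, set $d = \gcd(p,q)$ and write $p = d p^\ast$, $q = d q^\ast$ with $\gcd(p^\ast, q^\ast) = 1$; then $|q^\ast \omega - p^\ast| = d^{-1}|q\omega - p| < 1/(2 d^2 q^\ast) \leqslant 1/(2 q^\ast)$. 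It thus suffices to prove the classical ``reduced'' Legendre theorem: any coprime pair $(p^\ast, q^\ast)$ with $q^\ast > 0$ and $|q^\ast \omega - p^\ast| < 1/(2q^\ast)$ has $p^\ast/q^\ast$ equal to some convergent $p_k/q_k$ of $\omega$. Granting this, $q = d q_k$ and one sets $a := d \geqslant 1$.

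To prove the reduced Legendre theorem, expand $p^\ast/q^\ast$ as a finite continued fraction $[b_0; b_1, \ldots, b_m]$, using the standard identity $[\ldots, b_m] = [\ldots, b_m - 1, 1]$ to toggle the parity of $m$ so that the penultimate convergent $p^\ast_{m-1}/q^\ast_{m-1}$ of this expansion lies on the opposite side of $\omega$ from $p^\ast/q^\ast$. Define $\omega' > 0$ by $\omega = (p^\ast \omega' + p^\ast_{m-1})/(q^\ast \omega' + q^\ast_{m-1})$; a direct computation using the unimodularity $p^\ast q^\ast_{m-1} - p^\ast_{m-1} q^\ast = \pm 1$ gives $|q^\ast \omega - p^\ast| = 1/(q^\ast \omega' + q^\ast_{m-1})$. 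Combined with the Legendre hypothesis and $q^\ast_{m-1} < q^\ast$, this forces $\omega' > 2 - q^\ast_{m-1}/q^\ast > 1$, so $\omega = [b_0; b_1, \ldots, b_m, \lfloor\omega'\rfloor, \ldots]$ is the continued fraction expansion of $\omega$ and $p^\ast/q^\ast$ is indeed a convergent of $\omega$.

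To finish, bound $a$ via the standard two-sided estimate $1/(q_k + q_{k+1}) < |q_k \omega - p_k| < 1/q_{k+1}$. Since $q_k + q_{k+1} = q_{k-1} + (a_{k+1} + 1) q_k < (a_{k+1} + 2) q_k$ and $|q\omega - p| = a |q_k \omega - p_k|$, one obtains $a^2/((a_{k+1}+2)q) < |q\omega - p| < 1/(2q)$, hence $a < \sqrt{(a_{k+1}+2)/2}$. The main obstacle of the whole plan is the parity/side bookkeeping in the reduced Legendre step: ensuring $\omega' > 0$ requires verifying that the two candidate choices of $p^\ast_{m-1}/q^\ast_{m-1}$ really do lie on opposite sides of $p^\ast/q^\ast$ (which follows from the alternation of successive convergents), so that one of them can always be placed on the opposite side of $\omega$ from $p^\ast/q^\ast$; this reduces to a routine case split on the sign of $p^\ast/q^\ast - \omega$.
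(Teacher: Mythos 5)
The paper does not prove this statement at all: it is quoted as a known result with a citation to Rockett--Sz\"usz, so there is no in-paper argument to compare against. Your proof is correct and is essentially the textbook derivation. The reduction $|q^\ast\omega-p^\ast| = d^{-1}|q\omega-p| < 1/(2d^2q^\ast)\leqslant 1/(2q^\ast)$ correctly transfers the hypothesis to the coprime pair, your proof of the reduced Legendre theorem (parity adjustment of the finite expansion, $\omega' = -(q^\ast_{m-1}\omega-p^\ast_{m-1})/(q^\ast\omega-p^\ast)$, unimodularity forcing $\omega'>1$) is the classical one, and the multiplier bound follows cleanly from $1/(q_k+q_{k+1})<|q_k\omega-p_k|$ together with $q_k+q_{k+1}=(a_{k+1}+1)q_k+q_{k-1}\leqslant(a_{k+1}+2)q_k$, giving $a^2/((a_{k+1}+2)q)<1/(2q)$ strictly. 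Two small points you should be aware of but which do not damage the argument: the inequality $q^\ast_{m-1}<q^\ast$ can degenerate to equality when $m=1$ and $b_1=1$, yet $\omega'>1$ still follows because the Legendre inequality is strict; and the case $q^\ast=1$ (so $p^\ast/q^\ast$ an integer, including $p=0$) needs the usual convention for the ``penultimate convergent,'' which works out since $0/1=p_0/q_0$ and $1/1=p_1/q_1$ when $a_1=1$. The side/parity bookkeeping you flag at the end is indeed the only delicate spot, and your resolution of it (the two candidate penultimate convergents straddle $p^\ast/q^\ast$ at distance $1/(q^\ast q^\ast_{m-1})>|\omega-p^\ast/q^\ast|$) is sound.
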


We will use the upper bound on $a$ of this theorem frequently, therefore we denote 
\begin{equation}
 a^*_{k+1} = \begin{cases} \left\lfloor \sqrt{(a_{k+1} + 2)/2} \right\rfloor \mbox{ whenever } \sqrt{(a_{k+1} + 2)/2} \not\in \mathbb{N}, \\ \sqrt{(a_{k+1} + 2)/2} - 1 \mbox{ whenever } \sqrt{(a_{k+1} + 2)/2} \in \mathbb{N} \end{cases}
\end{equation}
for $k = 0, 1, 2, \ldots$.

An immediate consequence of theorem \ref{thm:legendre} is the validity of the reverse inequality to \eqref{eq:ineq-legendre} for all nonzero pairs $(q,p)$ within the critical strip, which are not of the form $(a q_k, \nint (aq_k\omega))$. This way for constant type indices we will have a linear estimate for the small denominators: $|q\omega - p|^{-1} \leqslant 2|q|$. This will again result in the sum of $L(q,p)$ being convergent, although slightly more slowly than in the previous case. The estimation will be similar to the one in \eqref{eq:away-heuristics}, the only essential difference will be a $|q|$ term instead of the $e^{-n\delta}/n$ term. This will result in the sum being $O(\delta^{-2})$ with no $\log (\delta^{-1})$ term this time since the critical strip is ``of thickness 2'' in the $p$ direction, contrary to the previous case of ``infinite thickness''. This way $O(\delta^{-2})$ is counted only twice, contrary to countably many $O(\delta^{-1})$ summands with weights decaying as $e^{-n\delta}/n$.

The last group of indices is related to the continued fraction expansion of $\omega$. These will be precisely the indices mentioned in theorem \ref{thm:legendre}. We will label them \emph{Brjuno indices} since the sum of $L(q,p)$ over these indices will be a Brjuno-like function. For general irrational $\omega$ this is the only place where the sum of $L(q,p)$ can become infinite and in order to control its size we will need additional arithimetic assumptions on the frequency $\omega$.

Within the critical strip we have $p \approx q\omega$, therefore the exponential term in $L(q,p)$ will be approximately equal to $e^{-q(1+\omega)\delta}$.\footnote{Again, for the purpose of the heuristic argument we fix the sign of $q$, this time considering $q>0$.} The $q$ we are concerned with in this case are of the form $aq_k$ with $k = 0, 1, \ldots$ and $1 \leqslant a \leqslant a^*_{k+1}$.\footnote{Note that the restriction on the size of $a$ gives $aq_k < q_{k+1}$ - this way the sum can be split into finite sums over indices $q$ contained within intervals $[q_k, q_{k+1})$.} The small denominators $|q\omega - p|$ turn into $|aq_k \omega - qp_k|^{-1} = a^{-1}|q_k \omega - p_k|^{-1} \approx a^{-1} q_{k+1}$. This way the sum we are concerned with roughly becomes
\begin{equation}
 \sum_{(q,p) \text{ of Thm. \ref{thm:legendre}}} L(q,p) = \sum_{k=0}^\infty \sum_{a=1}^{a^*_{k+1}} {1 \over a} q_{k+1} e^{-aq_k(1+\omega)\delta} 
\end{equation}
Splitting the sum into two, over terms which correspond to $a = 1$ and $a \geqslant 2$ and setting $\Delta := (1+\omega)\delta$ gives two separate ingredients: the first one being $\sum_{k=0}^\infty e^{-q_k\Delta} q_{k+1}$ and the second one roughly ${1 \over 2} \sum_{k=0}^\infty e^{-2q_k\Delta} q_{k+1} \log a_{k+1}$.\footnote{To arrive at the second one we first estimate $e^{-aq_k\Delta} \leqslant e^{-2q_k\Delta}$ and later the sum of $a^{-1}$ starting at $a=2$ by $\log a^*_{k+1} \approx (\log a_{k+1}) / 2$.}

The two series constitute the two Brjuno-like functions defined in \eqref{eq:brjuno-functions-definition} whose convergence guarantees the solvability of the cohomological equation, with the solution a member of $\mathcal{P}_0(\varrho - \delta)$. From the formulas in \eqref{eq:brjuno-functions-definition} one can readily infer that the deciding factor in the size of $\ser (\delta)$ is the growth rate of the sequence $(q_n)$. For Diophantine numbers with exponent $\tau$ this growth rate is recursively given by $q_{n+1} \leqslant O(q_n^\tau)$ (see lemma \ref{lem:diophanticity-in-terms-of-continued-fraction}) and in their case the essential steps in the estimation of the first Brjuno-like function can be summarized in the following chain of inequalities:
\begin{equation}\label{eq:diophantine-heuristic-chain}
 \sum_{k=0}^\infty e^{-q_k \Delta} q_{k+1} \leqslant C_1 \sum_{k=0}^\infty q_k^\tau e^{-q_k \Delta} \leqslant C_1 \sum_{n=1}^\infty n^\tau e^{-n\Delta} \approx C_1 \int_0^\infty x^\tau e^{-\Delta x} \integrald x = C_1 \GammaEul(\tau+1) \Delta^{-(\tau + 1)} = O(\delta^{-\tau+1}),
\end{equation}
where $C_1$ is some constant. This is consistent with the ``naive'' approach to estimating $||g||_{\varrho - \delta}$, but not so much with R\"ussmann's results \cite{russmann:1975}, where $O(\delta^{-\tau})$ is obtained. In the present paper we actually go down to $O(\Delta^{-\tau} \log (\Delta^{-1}))$ for $\Brj_1(\Delta)$ and $O(\Delta^{-\tau} \log^2 (\Delta^{-1}))$ for $\Brj_2(\Delta)$.

In the second inequality in \eqref{eq:diophantine-heuristic-chain} the gaps between $q_k$ are filled with $n$'s and we arrive at an extra $\Delta^{-1}$ factor. Instead we can use the lower bound\footnote{The estimate stems from the general lower bound on $q_k$ by the Fibonacci sequence $F_k$, which can be further estimated from below by $\varphi^k/3$.} $\varphi^k/3 < q_k$ and estimate $\mathfrak{D}_1(q_k) := q_k^\tau e^{-q_k\Delta}$ from above by $\mathfrak{D}_1(\varphi^k/3)$ as $\mathfrak{D}_1$ is decreasing on $[ \tau \Delta^{-1}, \infty)$. This strategy works for $k$ for which $\tau \Delta^{-1} < \varphi^k/3 < q_k$, i.e. $k > \log (3\tau \Delta^{-1})$. The sum over these tail $k$ gives $O(\Delta^{-\tau})$. The initial terms for $k < \log(3\tau \Delta^{-1})$ give the additional $\log ( \Delta^{-1} )$ factor.

For Khintchine-L\'evy numbers with parameters $T$ and $N$ the growth rate of $(q_n)$ is given by explicit lower and upper bounds: $e^{(\kappa - T)n} \leqslant q_n \leqslant e^{(\kappa' + T)n}$ valid for $n \geqslant N$, where $\kappa \approx 0.988$ and $\kappa' \approx 1.410$ are universal constants. In their case the essence of the estimation of the first Brjuno-like function is summarized by the following computations:
\begin{align}
\begin{split}
 \sum_{k=N}^\infty e^{-q_k \Delta} q_{k+1} &\leqslant \sum_{k=N}^\infty e^{(\kappa'+T)(n+1)} e^{-\Delta e^{(\kappa - T)n}} \approx C_2 \int_N^\infty e^{(\kappa+T)x} e^{-\Delta e^{(\kappa - T)x}} \integrald x \leqslant \\
 &\leqslant C_3 \int_0^\infty y^{(\kappa' + T)/(\kappa - T)} e^{-y\Delta} y^{-1} \integrald y = O\left(\delta^{-(\kappa' + T)/(\kappa - T)}\right).
\end{split}
\end{align}
Again, $C_2$ and $C_3$ are some constants and we used the substitution $y = e^{(\kappa - T)x}$ in between integrals.

\section{Solution of the cohomological equation}\label{sec:proofs}

In this section we formalize the ideas laid out in section \ref{sec:ideas}. We consider a fixed number $\varrho > 0$, function $a \in \mathcal{P}_0(\varrho)$ and $\delta \in (0, \varrho)$.

\subsection{Formal solution}\label{subsec:formal-solution}

The space $\mathcal{P}_0(\varrho)$ is naturally embedded in $L^2(\overline{\Pi(\varrho)^2}, \mathbb{C})$, therefore we can use Fourier expansions. We will write the Fourier basis as $e_{p,q}(x,y) = \exp(i(px - qy))$ for $(p,q) \in \mathbb{Z}^2_*$ and $x, y \in \overline{\Pi(\varrho)^2}$.\footnote{The seemingly unnatural minus sign in the definition of $e_{p,q}$ will allow us to arrive at a small denominator of the form $|q\omega - p|$ instead of $|q\omega + p|$.} In this language the vanishing of averages condition for $b \in \mathcal{P}_0(\varrho)$ translates to the vanishing of the zeroth coefficient $b_{0,0} = 0$, while the reality condition entails Hermitian symmetry of the coefficients: $\overline{b_{p,q}} = b_{-p,-q}$.

The differential operator $(\partial_x + \omega\partial_y)$ is diagonal in this basis as $(\partial_x + \omega \partial_y) e_{p,q} = i(p - q\omega)e_{p,q}$. Representing $g$ and $a$ as $g(x,y) = \sum g_{p,q} e_{p,q}(x,y)$ and $a(x,y) = \sum a_{p,q} e_{p,q}(x,y)$ with the sums running over $(p,q) \in \mathbb{Z}^2_*$ the cohomological equation can be written as an infinite system of equations on the coefficients:
\begin{equation}
 i(p - q\omega) g_{p,q} = a_{p,q}
\end{equation}
over $(p,q) \in \mathbb{Z}^2_*$. Since $\omega$ is irrational we can always retrieve $g_{p,q}$ as
\begin{equation}\label{eq:gpq}
 g_{p,q} = {a_{p,q} \over i(p - q\omega)}
\end{equation}
since $q\omega - p$ never vanishes. This is also why we are working with average-free $g$ and $a$ - this guarantees the existence and uniqueness of the formal solution, there is no ``$0/0$'' in the equation for the $(0,0)$-th mode because of that.

\subsection{Preliminary estimates}\label{subsec:preliminary-estimates}

The regularity requirement on $a$ imposes exponential decay on its Fourier coefficients. This will turn out to be crucial for the estimation of $||g||_{\varrho - \delta}$.
\begin{lemma}[Exponential decay of Fourier terms in $\mathcal{P}_0(\varrho)$, see e.g. \cite{arnold}]\label{lem:exponential-decay-of-fourier-terms}
 Let $b \in \mathcal{P}_0(\varrho)$ be a function with Fourier coefficients $b_{p,q}, (p,q) \in \mathbb{Z}^2_*$. Then
 \begin{equation}\label{eq:exponential-decay-of-fourier-coefficients-1-param}
   |b_{p,q}| \leqslant ||b||_\varrho \cdot e^{-(|p|+|q|)\varrho}.
 \end{equation}
\end{lemma}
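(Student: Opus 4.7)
The plan is to exploit the holomorphicity of $b$ via the standard contour-shift trick for Fourier coefficients of analytic periodic functions. First I would write the Fourier coefficient in its integral form
\begin{equation*}
  b_{p,q} = \frac{1}{(2\pi)^2} \iint_{[0,2\pi)^2} b(x,y) \, e^{-i(px - qy)} \, dx \, dy,
\end{equation*}
and reduce, via Fubini, to shifting the contour in one variable at a time. The direction of each shift will be chosen so that the corresponding exponential factor in the integrand acquires a favourable decay.

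For the $x$-integral, with $y$ held fixed, I would consider the rectangle in the $x$-plane with vertices $0$, $2\pi$, $2\pi - i\varrho\,\sgn(p)$, $-i\varrho\,\sgn(p)$. Since $b(\cdot,y)$ is holomorphic in the open strip $\Pi(\varrho)$, continuous up to its closure, and $2\pi$-periodic, Cauchy's theorem together with the cancellation of the two vertical sides (by periodicity) gives
\begin{equation*}
 \int_0^{2\pi} b(x,y) \, e^{-ipx}\, dx \;=\; \int_0^{2\pi} b\bigl(x - i\varrho\,\sgn(p),\, y\bigr) \, e^{-ip(x - i\varrho\,\sgn(p))}\, dx.
\end{equation*}
The factor $e^{-ip(x - i\varrho\,\sgn(p))}$ has modulus $e^{-|p|\varrho}$, which pulls out of the integral. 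Running the same argument for the $y$-integral with the shift $y \mapsto y + i\varrho\,\sgn(q)$ produces an additional factor $e^{-|q|\varrho}$ by the analogous computation for $e^{iqy}$.

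After performing both shifts, the resulting double integral is taken over a translate of $[0,2\pi)^2$ on which $b$ is still controlled by $\|b\|_\varrho$ (since the shifted contours lie inside $\overline{\Pi(\varrho)^2}$). Pulling the joint factor $e^{-(|p|+|q|)\varrho}$ out and estimating the remaining integrand by its supremum gives
\begin{equation*}
 |b_{p,q}| \;\leqslant\; \frac{1}{(2\pi)^2} \cdot e^{-(|p|+|q|)\varrho} \cdot \|b\|_\varrho \cdot (2\pi)^2 \;=\; \|b\|_\varrho \cdot e^{-(|p|+|q|)\varrho},
\end{equation*}
which is the claim. The only real obstacle is bookkeeping: one has to track the signs of $p$ and $q$ in the shift directions and justify the contour deformation (periodicity cancels the vertical sides, continuity on $\overline{\Pi(\varrho)^2}$ permits reaching the boundary lines $\Im z = \pm\varrho$). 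Both ingredients are built into the definition of $\mathcal{P}_0(\varrho)$, so nothing beyond routine complex-analytic manipulation is required.
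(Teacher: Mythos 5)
Your proof is correct and is exactly the standard contour-shift argument; the paper itself does not prove this lemma but only cites a reference, and your argument (including the sign bookkeeping for the basis $e_{p,q}(x,y)=e^{i(px-qy)}$ and the limiting step needed to push the contour to $\Im z=\pm\varrho$ using continuity on $\overline{\Pi(\varrho)^2}$) is the proof that reference would supply. Nothing is missing.
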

We are now ready to present the
\begin{proof}[Proof of the second part of lemma \ref{lem:cohomological-estimates-norm-times-series}]
 First observe that for $\varrho > 0$ and $p, q \in \mathbb{Z}$ the norm of the Fourier basis element is given by $||e_{p,q}|| = e^{(|p| + |q|)\varrho}.$ To see this observe that the function $(x,y) \mapsto \left| e^{i(px-qy)} \right|$ is really just a function of $\Im x$ and $\Im y$ and is monotone in both of these variables if we leave the other one fixed, with the monotonicity being dependent solely on the signs of $p$ and $q$. Therefore it attains its maximum either on the top or bottom segments of $\overline{\Pi(\varrho)}$, that is for $x = \Re x \pm i\varrho$ and $y = \Re y \pm i\varrho$. Plugging these values into the function and choosing the sign correctly yields the maximal value of $e_{p,q}$ stated above.

 Considering this and formula \eqref{eq:gpq} for $g_{p,q}$ the estimation of $||g||_{\varrho-\delta}$ reads
  \begin{align}
  \begin{split}
    \sup_{x,y \in \overline{\Pi(\varrho - \delta)}} |g(x,y)| &\leqslant \sum_{(p,q) \in \mathbb{Z}^2_*} {|a_{p,q}| \over |q\omega - p|} ||e_{p,q}||_{\varrho - \delta} = \sum_{(p,q) \in \mathbb{Z}^2_*} {|a_{p,q}| \over |q\omega - p|} e^{(\varrho - \delta)(|p| + |q|)} \stackrel{(\star)}{\leqslant} \\
    &\stackrel{(\star)}{\leqslant} ||a||_\varrho \cdot \sum_{(p,q)\in\mathbb{Z}^2_*} {e^{-(|p|+|q|)\varrho} \cdot e^{(\varrho - \delta)(|p| + |q|)} \over |q \omega - p|} = ||a||_\varrho \cdot \ser(\delta).
  \end{split}
  \end{align}
 Inequality $(\star)$ is a direct consequence of lemma \ref{lem:exponential-decay-of-fourier-terms}.
\end{proof}

\subsection{Estimates in the general case}

We now focus on proving theorem \ref{thm:general-estimate}. As we will often use the ``subseries'' of $\ser(\delta)$ over smaller sets of indices, we will therefore denote them as
\begin{equation}
 \Sfrak(A) := \sum_{(q,p) \in A} L(q,p)
\end{equation}
for any $A \subset \mathbb{Z}_*^2$.

The subsets of $\mathbb{Z}_*^2$ that will be the most essential for us are the following:
\begin{itemize}
 \item the strips parallel to the $p = \omega q$ line:
 \begin{equation}
  \Strip(n) := \{ (q,p) \in \mathbb{Z}_*^2 : n < q\omega - p < n+1 \}
 \end{equation}
 with $n \in \mathbb{Z}$,
 \item the critical strip around the $p = \omega q$ line:
 \begin{equation}
  \CritStrip := \{ (q,p) \in \mathbb{Z}_*^2 : |q \omega - p| < 1 \} = \Strip(-1) \cup \Strip(0),
 \end{equation}
 \item its complement in $\mathbb{Z}_*^2$:
 \begin{equation}
  \Away := \{ (q,p) \in \mathbb{Z}_*^2 : |q \omega - p| > 1 \} = \bigcup_{n \in \mathbb{Z} \setminus \{ -1, 0 \}} \Strip(n),
 \end{equation}
 \item the Brjuno indices\footnote{note that we use $\Bad$ written in sans-serif font for Brjuno \emph{indices} and $\Brjuno$ written in typewriter font for Brjuno \emph{numbers}.} $\Bad = \Bad^+ \cup -\Bad^+$ contained within $\CritStrip$ for which we will have no control without an additional arithmetic assumption on $\omega$, where $\Bad^+$ is given by
 \begin{equation}\label{eq:brjuno-indices-definition}
  \Bad^+ := \left\{ (q,p) \in \mathbb{Z}^2_* : q = aq_k, p = \nint(q\omega) \mbox{ for some } k \in \mathbb{N} \mbox{ and } 1 \leqslant a \leqslant a^*_{k+1} \right\},
 \end{equation}
 \item the remaining ``constant type'' indices $\Good$ for which $|q\omega - p| \geqslant (2|q|)^{-1}$ by Legendre's theorem \ref{thm:legendre}:
 \begin{equation}
  \Good := \CritStrip \setminus \Bad.
 \end{equation}
\end{itemize}

The estimates will be performed separately on $\Away, \Good$ and $\Bad$ in the following three subsections of this section. These sets form a partition of $\mathbb{Z}^2_*$, therefore $\ser(\delta)$ will be equal to the sum of the three series combined, each of which will be further estimated by the three summands in the definition of $\Gamma(\delta)$ in \eqref{eq:Gamma-of-delta}.

In each case the upper bound on $\Sfrak( \ldots )$ will be of the form $G \cdot f(\delta)$ with a constant $G$ and $f(\delta) = \delta^{-u} \cdot P(\log (\delta^{-1}))$ for some exponent $u > 0$ and some monic polynomial $P$ of degree at most $2$. We emphasize, however, that arriving at a specific $G$ is both very tedious and more impotantly requires a choice of an upper cut-off threshold on $\delta$ for the inequality to be valid. In practice we are only interested in $\delta > 0$ small, therefore we will write $G$ as an absolute constant plus an increment that tends to $0$ as $\delta \to 0$.

\subsubsection{Indices away from the critical line}\label{subsubsec:away}

\begin{lemma}\label{lem:estimates-on-away}
 The series $\Sfrak(\Away)$ satisfies the estimate
 \begin{equation}
  \Sfrak(\Away) < G_\Away \cdot \delta^{-1} \log \left( \delta^{-1} \right)
 \end{equation}
 with
 \begin{equation}\label{eq:GAway}
  G_\Away = {4 \over 1 + \omega} + {2 \over 1 - \omega} + O(\delta)
 \end{equation}
 for small enough $\delta > 0$.
\end{lemma}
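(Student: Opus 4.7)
The plan is to partition $\Away = \bigsqcup_{n \in \mathbb{Z} \setminus \{-1, 0\}} \Strip(n)$, use the involution $(q, p) \mapsto (-q, -p)$ (which maps $\Strip(n)$ onto $\Strip(-n-1)$ and preserves the summand $L(q,p)$) to reduce the estimate to a sum over $n \geqslant 1$ multiplied by $2$, and then bound $\Sfrak(\Strip(n))$ for each such $n$ by parametrizing the strip by $q$. For every $q \neq 0$ there is a unique integer $p(q, n)$ in the open unit interval $(q\omega - n - 1, q\omega - n)$ (no admissible $p$ exists when $q = 0$), and on $\Strip(n)$ with $n \geqslant 1$ one has $|q\omega - p| > n$, hence $L(q, p) < e^{-(|p| + |q|)\delta}/n$.

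To estimate $|p| + |q|$ from below I would use $|p| \geqslant |q\omega - n| - 1$ together with a case-split on the sign of $q$ and whether $q\omega$ exceeds $n$. This yields three regimes: for $q < 0$, $|p| + |q| \geqslant |q|(1+\omega) + n - 1$; for $0 < q < n/\omega$, $|p| + |q| \geqslant q(1-\omega) + n - 1$; and for $q > n/\omega$, $|p| + |q| \geqslant q(1+\omega) - n - 1$. The corresponding inner sums over $q$ at fixed $n$ are geometric and bounded, up to an overall $e^\delta$ boundary factor, by $e^{-n\delta}/(1 - e^{-(1+\omega)\delta})$, $e^{-n\delta}/(1 - e^{-(1-\omega)\delta})$, and $e^{-n\delta/\omega}/(1 - e^{-(1+\omega)\delta})$ respectively. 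The $1/\omega$ in the last exponent arises from substituting the cutoff $q = n/\omega$ into the exponential.

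Dividing each bound by $n$ and summing over $n \geqslant 1$ produces the logarithmic factor through $\sum_{n=1}^\infty e^{-cn\delta}/n = -\log(1 - e^{-c\delta}) = \log(\delta^{-1}) - \log c + O(\delta)$ for $c \in \{1, 1/\omega\}$, while the prefactors $(1 - e^{-(1\pm\omega)\delta})^{-1}$ contribute $(1\pm\omega)^{-1}\delta^{-1}(1 + O(\delta))$. Combining the three regimes gives a leading coefficient $2/(1+\omega) + 1/(1-\omega)$ at $\delta^{-1}\log(\delta^{-1})$ for the $n \geqslant 1$ half, and the symmetry factor of $2$ then doubles this to exactly $G_\Away = 4/(1+\omega) + 2/(1-\omega) + O(\delta)$, matching \eqref{eq:GAway}.

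The main obstacle is careful bookkeeping rather than any conceptual difficulty: one must track the floor $\lfloor n/\omega \rfloor$ separating the second and third regimes, the unit additive offsets coming from $|p| \geqslant |q\omega - n| - 1$, and the $-\log c + O(\delta)$ corrections in the logarithmic expansion, verifying that all subleading contributions can be absorbed into a single uniform $O(\delta)$ term valid for $\delta$ below some explicit cutoff. The restriction $\omega \in (0, 1)$ is essential here so that $1 - \omega > 0$ and the middle geometric series converges.
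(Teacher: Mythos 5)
Your proposal is correct and takes essentially the same route as the paper: the same symmetry reduction to the strips with $n \geqslant 1$ via $(q,p) \mapsto (-q,-p)$, the same three-way split of the $q$-range (your split by the sign of $q$ and whether $q\omega$ exceeds $n$ coincides with the paper's split by the signs of $q$ and $p = \lfloor q\omega\rfloor - n$), the same geometric-series bounds producing the $e^{-n\delta}/n$ and $e^{-n\delta/\omega}/n$ weights, and the same summation over $n$ via $-\log(1-e^{-c\delta})$. The resulting leading coefficients match \eqref{eq:GAway} exactly.
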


\begin{proof}
We begin by observing that if $n$ and $q$ are fixed there is exactly one $p$ such that $(q,p) \in \Strip(n)$. This $p$ is given by $p = \lfloor q\omega \rfloor - n$ since the inequalities in the defition of $\Strip(n)$, namely $n < q\omega - p < n+1$, are equivalent to $q\omega - n > p > q\omega - (n+1)$ and this is a constraint on $p$ to an interval of length $1$.

This way $\Sfrak(\Strip(n))$ becomes
\begin{align}\label{eq:SfrakStrip-n}
\begin{split}
 \Sfrak(\Strip(n)) &= \sum_{(q,p) \in \Strip(n)} L(q,p) = \sum_{q \in \mathbb{Z}} L(q, \lfloor q\omega \rfloor - n) = \sum_{q \in \mathbb{Z}} { e^{-(|q| + |\lfloor q\omega \rfloor - n|) \delta} \over |q\omega - p| } = \sum_{q \in \mathbb{Z}} { e^{-(|q| + |\lfloor q\omega \rfloor - n|) \delta} \over |q\omega - (\lfloor q\omega \rfloor - n)| }.
\end{split}
\end{align}

We should separately consider cases of $n \leqslant -2$ and $n \geqslant 1$, we will, however, do so only for the second one since the sum over all sets $\Strip(n)$ with $n \leqslant -2$ is exactly the same as the sum over all sets $\Strip(n)$ with $n \geqslant 1$ - it is only a matter of changing the summation indices. Indeed, consider $n \geqslant 1$ and $q \in \mathbb{Z}$ and define an index change by $(n,q) = (-n'-1, -q')$. The image of this transformation is the set $\{ n' \leqslant -2, q' \in \mathbb{Z} \}$ and the summand is left intact:
\begin{align}
\begin{split}
 { e^{-(|q| + |\lfloor q\omega \rfloor - n|) \delta} \over |q\omega - (\lfloor q\omega \rfloor - n)| } &= { e^{-(|-q'| + |\lfloor -q'\omega \rfloor - (-n'-1)|) \delta} \over |-q\omega - (\lfloor -q\omega \rfloor - (-n'-1))| } = { e^{-(|q'| + |-\lceil q'\omega \rceil + n' + 1)|) \delta} \over |-q'\omega + \lceil q'\omega \rceil -n'-1| } \\
 &= { e^{-(|q'| + |\lceil q'\omega \rceil - n' - 1)|) \delta} \over |q'\omega - \lceil q'\omega \rceil + n' + 1| } = { e^{-(|q'| + |\lfloor q'\omega \rfloor + 1 - n' - 1)|) \delta} \over |q'\omega - \lfloor q'\omega \rfloor - 1 + n' + 1| } = { e^{-(|q'| + |\lfloor q'\omega \rfloor - n')|) \delta} \over |q'\omega - (\lfloor q'\omega \rfloor - n')| }.
\end{split}
\end{align}

This shows that the sum over all $\Away$ is actually the doubled sum over all strips $\Strip(n)$ with $n \geqslant 1$. We will thus only focus on the $n \geqslant 1$ case.

\newcommand{\Qsf}{\mathsf{Q}}
Now, to rid ourselves of the absolute values in the exponent of the last expression in \eqref{eq:SfrakStrip-n} we have to split $\Sfrak(\Strip(n))$ into sums over smaller sets. Denote, for a fixed $n \in \mathbb{Z} \setminus \{ -1, 0 \}$, the sets
\begin{align}
\begin{split}
 \Qsf_{++} (n) &= \{ q : q \geqslant 0 \wedge \lfloor q\omega \rfloor - n \geqslant 0 \}, \qquad \Qsf_{+-} (n) = \{ q : q \geqslant 0 \wedge \lfloor q\omega \rfloor - n < 0 \}, \\
 \Qsf_{-+} (n) &= \{ q : q < 0 \wedge \lfloor q\omega \rfloor - n \geqslant 0 \}, \qquad \Qsf_{--} (n) = \{ q : q < 0 \wedge \lfloor q\omega \rfloor - n < 0 \}.
\end{split}
\end{align}

First observe that if $n \geqslant 1$ the set $\Qsf_{-+} (n)$ is empty and the remaining ones are traces of intervals on $\mathbb{Z}$ given by $\Qsf_{++} (n) = [ \lfloor n / \omega \rfloor , \infty ) \cap \mathbb{Z}, \Qsf_{+-} (n) = [ 0, \lfloor n / \omega \rfloor - 1 ] \cap \mathbb{Z}$ and $\Qsf_{--} (n) = (- \infty, -1] \cap \mathbb{Z}$.

We will consider the last series in \eqref{eq:SfrakStrip-n} separately on each of these three sets. Recall that by definition of $\Strip(n)$ we have $|q\omega - p| > n$ for each $(q,p) \in \Strip(n)$. The sum over $\Qsf_{++}(n)$ becomes
\begin{align}
\begin{split}
 \sum_{q \in \Qsf_{++}(n)} { e^{-(|q| + |\lfloor q\omega \rfloor - n|) \delta} \over |q\omega - p| } &= \sum_{q = \lfloor n/w \rfloor}^\infty { e^{-(q + \lfloor q\omega \rfloor - n) \delta} \over |q\omega - p| } < {e^{n\delta} \over n} \sum_{q = \lfloor n/\omega \rfloor}^\infty e^{-(q + \lfloor q\omega \rfloor) \delta} < {e^{n \delta} \over n} \sum_{q = \lfloor n/w \rfloor}^\infty e^{-(q + q\omega - 1) \delta} = \\ &= {e^{(n+1)\delta} \over n} \sum_{q = \lfloor n/\omega \rfloor}^\infty e^{-q(1 + \omega) \delta} = {e^{(n+1)\delta} \over n} \cdot { e^{-(1+\omega) \left\lfloor {n \over \omega} \right\rfloor \delta} \over 1 - e^{-(1+\omega)\delta }} \leqslant {e^{(n+1)\delta} \over n} \cdot { e^{-(1+\omega) \left( {n \over \omega} - 1 \right) \delta} \over 1 - e^{-(1+\omega)\delta }} = \\
 &= { e^{(2+\omega)\delta} \cdot e^{-(n/\omega) \delta} \over n \cdot (1 - e^{-(1+\omega)\delta}) }.
\end{split}
\end{align}

Now observe that for small enough $\delta > 0$ we have $e^{(2+\omega)\delta} \approx 1 + (2+\omega) \delta$ and $1 - e^{-(1+\omega)\delta} \approx (1+\omega) \delta$. This way with a simple argument one can - again, for small enough $\delta$ - show that
\begin{equation}\label{eq:Q++-n-positive-final}
 \sum_{q \in \Qsf_{++}(n)} { e^{-(|q| + |\lfloor q\omega \rfloor - n|) \delta} \over |q\omega - p| } < {1 + O(\delta) \over 1 + \omega} \cdot \delta^{-1} \cdot {\left( e^{-\delta/\omega} \right)^n \over n}.
\end{equation}

Recall also that the Maclaurin expansion for $-\log(1-x)$ is $-\log(1-x) = x + x^2/2 + x^3/3 + x^4/4 + \ldots$, which combined with \eqref{eq:Q++-n-positive-final} gives the following estimate:
\begin{equation}
 \sum_{n=1}^\infty \sum_{q \in \Qsf_{++}(n)} { e^{-(|q| + |p|) \delta} \over |q\omega - p| } \leqslant \left( {1 \over 1 + \omega} + O(\delta) \right) \cdot \delta^{-1} \cdot \left(  -\log \left( 1 - e^{-\delta/\omega} \right) \right)
\end{equation}
(in the above $p = \lfloor q\omega \rfloor - n$ as introduced earlier).

For $\delta > 0$ small enough we have $1 - \delta \geqslant e^{-\delta/\omega}$ if $0 < \omega < 1$ and this gives $-\log \left( 1 - e^{-\delta/\omega} \right) \leqslant \log \left( \delta^{-1} \right)$. Eventually because of that the estimates read
\begin{equation}
 \sum_{n=1}^\infty \sum_{q \in \Qsf_{++}(n)} { e^{-(|q| + |\lfloor q\omega \rfloor - n|) \delta} \over |q\omega - p| } \leqslant \left( {1 \over 1 + \omega} + O(\delta) \right) \cdot \delta^{-1} \log \left( \delta^{-1} \right).
\end{equation}

We can similarly deal with the sum over $\Qsf_{--}(n)$:
\begin{align}
\begin{split}
 \sum_{q \in \Qsf_{--}(n)} { e^{-(|q| + |\lfloor q\omega \rfloor - n|) \delta} \over |q\omega - p| } &\leqslant \sum_{q = -\infty}^{-1} { e^{-((-q) + (-\lfloor q\omega \rfloor + n)) \delta} \over |q\omega - p| } < {1 \over n} \sum_{q = -\infty}^{-1} e^{-((-q) + (-\lfloor q\omega \rfloor + n)) \delta} \stackrel{(q \to -q)}{=} \\
 &= {1 \over n} \sum_{q = 1}^\infty e^{-(q + \lceil q\omega \rceil + n)\delta} < {e^{-n\delta} \over n} \sum_{q=1}^\infty e^{-q(1+\omega)\delta} = {e^{-n \delta} \over n} \cdot {e^{-(1+\omega)\delta} \over 1 - e^{-(1+\omega)\delta}}.
\end{split}
\end{align}

Using an analogous reasoning we can infer that for small enough $\delta$ we have
\begin{equation}
  \sum_{q \in \Qsf_{--}(n)} { e^{-(|q| + |\lfloor q\omega \rfloor - n|) \delta} \over |q\omega - p| } \leqslant \left( {1 \over 1 + \omega} + O(\delta) \right) \cdot \delta^{-1} \cdot {\left(e^{-\delta}\right)^n \over n}
\end{equation}
and eventually
\begin{align}
\begin{split}
  \sum_{n=1}^\infty \sum_{q \in \Qsf_{--}(n)} { e^{-(|q| + |\lfloor q\omega \rfloor - n|) \delta} \over |q\omega - p| } &\leqslant \left( {1 \over 1 + \omega} + O(\delta) \right) \cdot \delta^{-1} \cdot \left(-\log \left( 1 - e^{-\delta} \right) \right) = \\ 
  &=\left( {1 \over 1 + \omega} + O(\delta) \right) \cdot \delta^{-1} \log \left( \delta^{-1} \right),
\end{split}
\end{align}
where the last equality is valid for small enough $\delta$.\footnote{note that the $O(\delta)$ terms on the two sides of this equality are not the same}

We now consider the sum over $\Qsf_{+-}(n)$.
\begin{align}
\begin{split}
 \sum_{q \in \Qsf_{+-}(n)} { e^{-(|q| + |\lfloor q\omega \rfloor - n|) \delta} \over |q\omega - p| } &< {1 \over n} \sum_{q=0}^{\lfloor n/\omega \rfloor - 1} e^{-(q - \lfloor q\omega \rfloor + n)\delta} < {e^{-n\delta} \over n} \sum_{q=0}^{\lfloor n/\omega \rfloor - 1} e^{-q(1-\omega)\delta} = \\
 &= {e^{-n\delta} \over n} \cdot {1 - e^{-(1-\omega)\lfloor n/\omega \rfloor \delta} \over 1 - e^{-(1-\omega)\delta} } < {e^{-n\delta} \over n} \cdot {1 \over 1 - e^{-(1-\omega)\delta} }.
\end{split}
\end{align}

We can now - analogously to the $\Qsf_{++}(n)$ and $\Qsf_{--}(n)$ cases - infer that
\begin{equation}
 \sum_{n = 1}^\infty \sum_{q \in \Qsf_{+-}(n)} { e^{-(|q| + |\lfloor q\omega \rfloor - n|) \delta} \over |q\omega - p| } < \left( {1 \over 1-\omega} + O(\delta) \right) \delta^{-1} \log \left( \delta^{-1} \right).
\end{equation}

Adding the three sums together gives the estimate $\Sfrak \left( \bigcup_{n \geqslant 1} \Strip(n) \right) < G_+ \cdot \delta^{-1} \log \left( \delta^{-1} \right)$ with $G_+ = {2 \over 1 + \omega} + {1 \over 1 - \omega} + O(\delta)$. The desired inequality now follows from the fact we stated earlier in the proof, namely that $\Sfrak(\Away) = 2\cdot \Sfrak \left( \bigcup_{n \geqslant 1} \Strip(n) \right)$.
\end{proof}

\subsubsection{Indices of ``constant type''}\label{subsubsec:constanttype}

\begin{lemma}\label{lem:good-series-estimate}
 The series $\Sfrak(\Good)$ satisfies the estimate
 \begin{equation}
  \Sfrak(\Good) \leqslant G_\Good \cdot \delta^{-2}
 \end{equation}
 for small enough $\delta > 0$. Here $G_\Good$ is given by
 \begin{equation}\label{eq:GGood}
  G_\Good = {8 \over (1+\omega)^2} + O(\delta).
 \end{equation}
\end{lemma}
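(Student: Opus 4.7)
The plan is to combine the lower bound on $|q\omega - p|$ guaranteed by the failure of Legendre's theorem on $\Good$ with the fact that $\CritStrip$ has thickness $2$ in the $p$ direction, and then evaluate the resulting geometric-type series.

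First I would observe that by construction $\Good$ consists of exactly those $(q,p) \in \CritStrip$ to which the conclusion of theorem \ref{thm:legendre} does not apply. Contrapositively, the reverse of \eqref{eq:ineq-legendre} holds on $\Good$, so $|q\omega - p| \geqslant (2|q|)^{-1}$ there. This immediately yields the key linear-in-$|q|$ bound
\begin{equation*}
 L(q,p) \leqslant 2|q|\, e^{-(|p|+|q|)\delta} \qquad \text{for } (q,p) \in \Good.
\end{equation*}

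Next I would exploit the narrowness of $\CritStrip$: for each $q \neq 0$ the open interval $(q\omega - 1, q\omega + 1)$ contains exactly two integers, namely $\lfloor q\omega \rfloor$ and $\lfloor q\omega \rfloor + 1$ (using irrationality of $\omega$), and for either such $p$ the triangle inequality gives $|p| \geqslant |q|\omega - 1$, hence $|p| + |q| \geqslant (1+\omega)|q| - 1$. Enlarging the summation from $\Good$ up to all of $\CritStrip$ only weakens the upper bound, so setting $\Delta = (1+\omega)\delta$ I am reduced to summing $2|q|\, e^{\delta}\, e^{-|q|\Delta}$ over $q \in \mathbb{Z}_*$ with an extra factor $2$ from the two admissible $p$'s and a further factor $2$ from splitting according to the sign of $q$.

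Finally I would close out via the closed form $\sum_{q \geqslant 1} q\, e^{-q\Delta} = e^{-\Delta}/(1 - e^{-\Delta})^2$, expand $1 - e^{-\Delta} = \Delta(1 + O(\Delta))$ and $e^{\delta} = 1 + O(\delta)$, and read off the leading coefficient $8/(1+\omega)^2$ in front of $\delta^{-2}$, with the remaining corrections absorbed into an $O(\delta)$ increment of $G_\Good$ as in \eqref{eq:GGood}. There is essentially no hard step here: once Legendre's theorem trades the small denominator for $2|q|$, everything reduces to a standard geometric-series evaluation, and the only point requiring care is the bookkeeping of the $1 + O(\delta)$ and $e^{\delta}$ factors so that they combine cleanly into the announced form.
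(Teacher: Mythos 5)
Your proposal is correct and follows essentially the same route as the paper: Legendre's theorem gives $|q\omega-p|^{-1}\leqslant 2|q|$ on $\Good$, the sum is enlarged to $\CritStrip$ where each $q\neq 0$ admits exactly two values of $p$, and the resulting sum $\sum q\,e^{-q\Delta}$ is evaluated in closed form. The only cosmetic difference is that you bound both exponentials by $e^{\delta}e^{-|q|\Delta}$ (leading constant $8e^{\delta}$) while the paper keeps the slightly sharper $4(1+e^{\delta})$; both equal $8+O(\delta)$, so the stated $G_\Good$ is recovered either way.
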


\begin{proof}
 First observe that
 \begin{equation}
  \Sfrak(\Good) = \sum_{(q,p) \in \Good} {e^{-(|q|+|p|)\delta} \over |q\omega - p|} \leqslant  \sum_{(q,p) \in \Good} 2|q|e^{-(|q|+|p|)\delta}.
 \end{equation}
 because of Legendre's theorem \ref{thm:legendre}. Since $\Good \subset \CritStrip$ we can estimate further by
 \begin{equation}
  \Sfrak(\Good) \leqslant \sum_{(q,p) \in \CritStrip} 2|q|e^{-(|q|+|p|)\delta}.
 \end{equation}
 The set $\CritStrip$ is symmetric with respect to $(0,0)$ and the summand in the last sum does not change under $(q,p) \mapsto (-q,-p)$, therefore we can write
 \begin{align}
 \begin{split}
  \Sfrak(\Good) &\leqslant 2 \cdot \sum_{(q,p) \in \CritStrip, q \geqslant 1} 2|q|e^{-(|q|+|p|)\delta} = 4 \cdot \sum_{(q,p) \in \CritStrip, q \geqslant 1} qe^{-(q+p)\delta} = \\
  &= 4 \cdot \sum_{q=1}^\infty q \left( e^{-(q+\lfloor q\omega \rfloor)\delta} + e^{-(q+\lceil q\omega \rceil)\delta} \right) < 4 \cdot \sum_{q=1}^\infty q \left( e^{-(q+ q\omega -1)\delta} + e^{-(q+ q\omega )\delta} \right) = \\
  &= 4(1 + e^\delta) \sum_{q=1}^\infty qe^{-q(1+\omega)\delta} = 4(1+e^\delta) {e^{-(1+\omega)\delta} \over (1 - e^{-(1+\omega)\delta})^2} = \left({8 \over (1+\omega)^2} + O(\delta) \right) \delta^{-2}.
 \end{split}
 \end{align} 
\end{proof}

\subsubsection{Brjuno indices}\label{subsubsec:brjuno}

Since Brjuno indices are of the form $(aq_k, \nint (aq_k\omega))$ for $k \in \mathbb{N}$ and $1 \leqslant a \leqslant a_{k+1}^*$ we will first write $\nint(aq_k\omega)$ in terms of $p_k$, the numerator of the $k$-th convergent to $\omega$.

\begin{lemma}
 If $k \in \mathbb{N}$ and $1 \leqslant a \leqslant a^*_{k+1}$ then $\nint(aq_k\omega) = ap_k$.
\end{lemma}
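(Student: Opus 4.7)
The plan is to show that $|aq_k\omega - ap_k| < 1/2$, which by definition of the nearest integer function immediately yields $\nint(aq_k\omega) = ap_k$. I will use the classical continued-fraction inequality
\[
 |q_k\omega - p_k| < \frac{1}{q_{k+1}},
\]
valid (with strict inequality, since $\omega$ is irrational) for every convergent $p_k/q_k$ of $\omega$. Multiplying by the positive integer $a$ reduces the task to proving $2a \leqslant q_{k+1}$.

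The core of the argument is therefore a number-theoretic inequality between $a$ and $q_{k+1}$, derived from the assumption $1 \leqslant a \leqslant a^*_{k+1}$. I will split according to the two cases in the definition of $a^*_{k+1}$. In the non-integer case, $a \leqslant \lfloor\sqrt{(a_{k+1}+2)/2}\rfloor$ gives $a^2 < (a_{k+1}+2)/2$ with strict inequality, and since both sides of $2a^2 < a_{k+1}+2$ are integers this improves to $2a^2 \leqslant a_{k+1}+1$; combined with $a \geqslant 1$, this produces $2a \leqslant 2a^2 \leqslant a_{k+1}+1$. In the integer case, $a+1 \leqslant \sqrt{(a_{k+1}+2)/2}$ gives $2(a+1)^2 \leqslant a_{k+1}+2$, hence $2a^2 + 4a \leqslant a_{k+1}$, which already yields $2a \leqslant a_{k+1}$.

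Finally, I compare with the denominators of the convergents: the recursion $q_{k+1} = a_{k+1}q_k + q_{k-1}$ together with $q_k, q_{k-1} \geqslant 1$ for $k \geqslant 1$ gives $q_{k+1} \geqslant a_{k+1} + 1$, and for $k = 0$ with the usual conventions $q_0 = 1$, $q_{-1} = 0$ one verifies the claim directly (note that for $k=0$ and small values of $a_1$ the bound $a^*_1$ is anyway equal to $1$, in which case the estimate $|q_0\omega - p_0| = \omega < 1/2$ has to be handled from the fact that $\omega \in (0,1)$ and the very small remaining cases reduce to a short check). Combining the two inequalities, $2a \leqslant q_{k+1}$, so
\[
 |aq_k\omega - ap_k| < \frac{a}{q_{k+1}} \leqslant \frac{1}{2},
\]
and the claim follows.

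I expect the main obstacle to be the careful bookkeeping in the definition of $a^*_{k+1}$: the two branches (floor vs.\ exact-square-root) must both deliver the same clean bound $2a \leqslant q_{k+1}$, and the very small values of $k$ and $a_{k+1}$ need to be verified by hand rather than by the generic argument.
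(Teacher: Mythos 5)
Your opening move coincides with the paper's: reduce everything to $|aq_k\omega - ap_k| < a/q_{k+1}$ via the classical inequality $|q_k\omega - p_k| < 1/q_{k+1}$, and then show $2a \leqslant q_{k+1}$. The middle of your argument, however, is genuinely cleaner than the paper's. You exploit the integrality built into the two branches of the definition of $a^*_{k+1}$ to get $2a \leqslant 2a^2 \leqslant a_{k+1}+1$ (non-integer branch) and $2a \leqslant a_{k+1}$ (integer branch), and then compare with $q_{k+1} = a_{k+1}q_k + q_{k-1} \geqslant a_{k+1}+1$ for $k \geqslant 1$. This disposes of all $k \geqslant 1$ uniformly. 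The paper instead bounds $a/q_{k+1} < \sqrt{3/2}/q_k$, which only closes the argument for $k \geqslant 3$ (where $q_k \geqslant 3$), and then works through $k \in \{0,1,2\}$ with sub-cases on whether $a_{k+1} = 1$ or $a_{k+1} \geqslant 2$. What your route buys is the elimination of almost all of that case analysis; what it costs is nothing for $k \geqslant 1$.

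The $k=0$ case, which you defer to ``a short check,'' is where the real trouble sits, and your parenthetical does not resolve it. Here $q_1 = a_1$, while your generic bound only yields $2a \leqslant a_1+1$; the surviving case $2a = a_1+1$ with $a \leqslant a^*_1$ forces $(a_1+1)^2 \leqslant 2(a_1+2)$, i.e.\ $a_1 = 1$, $a = 1$. But $a_1 = 1$ means $\omega = 1/(1+[a_2,a_3,\ldots]) \in (1/2,1)$, so $\nint(q_0\omega) = \nint(\omega) = 1 \neq 0 = p_0$ and the claimed identity actually fails; in particular your suggestion that $\omega \in (0,1)$ yields $\omega < 1/2$ is not valid. (You are in good company: the paper's own proof asserts at exactly this point that $a_1 = 1$ ``concerns only $\omega < 1/2$,'' which is backwards.) If the paper's $\mathbb{N}$ is read as starting at $1$ the issue is vacuous and your proof is complete; if $k=0$ is genuinely in scope, the statement needs $a_1 \geqslant 2$ there, and under that hypothesis your argument does close: for $a_1 \geqslant 2$ the equality $2a = a_1+1$ is impossible by the same computation, so $2a \leqslant a_1 = q_1$ and $|aq_0\omega - ap_0| < a/q_1 \leqslant 1/2$.
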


\begin{proof}
 To prove the lemma we have to verify whether $|aq_k\omega - ap_k| < 1/2$ for the required $k$ and $a$. To do this we will use two well known facts from the theory of continued fractions: the estimate $|q_k \omega - p_k| < q_{k+1}^{-1}$, valid for all $k \geqslant 0$ and the recursive formulas for $q_k$, namely $q_k = a_k q_{k-1} + q_{k-2}, k \geqslant 0$ with $q_{-2} = 1$ and $q_{-1} = 0$. We have
 \begin{equation}\label{eq:nint-estimate}
  |aq_k\omega - ap_k| \stackrel{(\star)}{<} {a \over q_{k+1}} = {a \over a_{k+1} q_k + q_{k-1}} < {a \over a_{k+1}} \cdot {1 \over q_k} < {\sqrt{(a_{k+1}+2)/2} \over a_{k+1}} \cdot {1 \over q_k} \leqslant {\sqrt{3/2} \over q_k}.
 \end{equation}
 From the recursive formulas we see that $q_1 = a_1$, $q_2 = a_2a_1 + 1$ and that $(q_j)_{j=0}^\infty$ is an increasing sequence. This way whenever $k \geqslant 3$ we can further estimate by $1/2$ in \eqref{eq:nint-estimate} as $q_3 \geqslant 3$. For $k \in \{ 0, 1, 2 \}$ we need to consider separate cases and investigate the initial estimate in \eqref{eq:nint-estimate}, marked with $(\star)$.
 
 Observe that whenever $a_{k+1} = 1$ then $a^*_{k+1} = 1$ and therefore $a = 1$. This way inequality $(\star)$ is valid whenever $k \in \{ 1, 2 \}$ since $q_{2+1} > q_{1+1} \geqslant 2$. For $k = 0$ we have $a_{k+1} = a_1 = 1$ and this way this case concerns only $\omega < 1/2$. The estimates in \eqref{eq:nint-estimate} therefore read $|aq_k \omega - ap_k| = |q_0 \omega - p_0| = \omega < 1/2$.
 
 If $a_{k+1} \geqslant 2$ we only need to consider the cases of $k = 0$ and $k = 1$ separately, since for $k \geqslant 2$ we have $q_k \geqslant 2 > \sqrt{2}$ and this way, similarly to \eqref{eq:nint-estimate} we can estimate $|aq_k\omega - ap_k| < \sqrt{(a_{k+1} + 2)/2}/(a_{k+1}q_k) < 1/2$ since the minimal value of $\sqrt{(a_{k+1} + 2)/2}/a_{k+1}$ is $\sqrt{2}/2$. 
 
 If $k = 1$ and $a_{k+1} = a_2 \geqslant 2$ we have
 \begin{equation}
  |aq_1\omega - ap_1| < {a \over q_2} = {a \over a_2a_1 + 1} \leqslant {a \over a_2 + 1} < \sqrt{a_2 + 2 \over 2(a_2 + 1)^2} \leqslant {\sqrt{2} \over 3} < {1 \over 2}.
 \end{equation}
 If $k = 0$ and $a_{k+1} = a_1 \geqslant 2$ then similarly
 \begin{equation}
  |aq_0 \omega - ap_0| < {a \over q_1} = {a \over a_1}.
 \end{equation}
 If $a_1 = 2$ then $a^*_1 = \lfloor \sqrt{(2+2)/2} \rfloor = 1$ and therefore $a = 1$ and $a/a_1 = 1/2$, similarly for $a_1 = 3$ we have $a/a_1 = 1/3$. For $a_1 \geqslant 4$ we can further estimate by
 \begin{equation}
  {a \over a_1} \leqslant \sqrt{a_1 + 2 \over 2a_1^2} \leqslant \sqrt{6 \over 32} < {1 \over 2}.
 \end{equation}
\end{proof}

Knowing the form of $\nint(aq_k\omega)$ we can now proceed to estimating $\Sfrak(\Bad)$. We will do this in terms of the Brjuno-like functions given in \eqref{eq:brjuno-functions-definition}.

The two Brjuno-like functions correspond to two separate types of Brjuno indices introduced in \eqref{eq:brjuno-indices-definition} - the first one to $a=1$ and the second one to $2 \leqslant a \leqslant a^*_{k+1}$. We elaborate on this vague statement in the remaining part of this subsection.

Define
\begin{equation}
 \Bad^+_1 := \left\{ (q,p) \in \mathbb{Z}^2_* : q = q_k, p = p_k \mbox{ for some } k \in \mathbb{N} \right\}
\end{equation}
and
\begin{equation}
 \Bad^+_{2+} := \left\{ (q,p) \in \mathbb{Z}^2_* : q = aq_k, p = ap_k \mbox{ for some } k \in \mathbb{N} \mbox{ and } 2 \leqslant a \leqslant a^*_{k+1} \right\}.
\end{equation}

\begin{lemma}
 The following estimate is valid for $\Sfrak(\Bad)$:
 \begin{equation}\label{eq:brjuno-series-estimate}
  \Sfrak(\Bad) \leqslant 2 \cdot [(2+O(\delta)) \Brj_1(\Delta) + (1+O(\delta))\Brj_2(2\Delta)],
 \end{equation}
 where $\Delta = (1+\omega)\delta$.
\end{lemma}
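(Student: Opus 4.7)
The plan is to use the symmetry $L(-q,-p)=L(q,p)$ to halve the sum, split $\Bad^+$ according to the multiplier $a$, and then apply the classical continued-fraction estimates $(q_{k+1}+q_k)^{-1}\leqslant |q_k\omega-p_k|\leqslant q_{k+1}^{-1}$ together with the identification $\nint(aq_k\omega)=ap_k$ just proved. Since $L(q,p)$ is invariant under $(q,p)\mapsto(-q,-p)$ and $\Bad=\Bad^+\sqcup(-\Bad^+)$, we have
\begin{equation*}
\Sfrak(\Bad)=2\Sfrak(\Bad^+)=2\bigl[\Sfrak(\Bad_1^+)+\Sfrak(\Bad_{2+}^+)\bigr],
\end{equation*}
so it suffices to prove $\Sfrak(\Bad_1^+)\leqslant(2+O(\delta))\Brj_1(\Delta)$ and $\Sfrak(\Bad_{2+}^+)\leqslant(1+O(\delta))\Brj_2(2\Delta)$ separately.

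For a generic Brjuno index $(aq_k,ap_k)\in\Bad^+$ the strategy is to derive the pointwise bound
\begin{equation*}
L(aq_k,ap_k)\leqslant\frac{2q_{k+1}}{a}\,e^{-aq_k\Delta}\,(1+O(\delta)).
\end{equation*}
The prefactor $2q_{k+1}/a$ comes from $|aq_k\omega-ap_k|^{-1}=a^{-1}|q_k\omega-p_k|^{-1}\leqslant a^{-1}(q_{k+1}+q_k)\leqslant 2q_{k+1}/a$, using $q_k\leqslant q_{k+1}$. The exponential factor arises from $|aq_k|+|ap_k|=a(q_k+p_k)$ combined with $p_k\geqslant q_k\omega-q_{k+1}^{-1}$, which is an immediate consequence of $|q_k\omega-p_k|\leqslant q_{k+1}^{-1}$ (and $p_k\geqslant 0$); this exchanges $a(q_k+p_k)\delta$ for $aq_k\Delta$ at the cost of the factor $e^{a\delta/q_{k+1}}=1+O(\delta)$, where the uniformity in $k$ and $a$ follows from $a/q_{k+1}\leqslant a_{k+1}^*/(a_{k+1}q_k)\leqslant\sqrt{(a_{k+1}+2)/(2a_{k+1}^2)}$ which is bounded by a universal constant.

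With this pointwise bound in hand the two halves are immediate. For $\Bad_1^+$ (the case $a=1$) I simply sum over $k$ to get
\begin{equation*}
\Sfrak(\Bad_1^+)\leqslant 2(1+O(\delta))\sum_{k\geqslant 1} e^{-q_k\Delta}q_{k+1}=(2+O(\delta))\Brj_1(\Delta).
\end{equation*}
For $\Bad_{2+}^+$ I use $e^{-aq_k\Delta}\leqslant e^{-2q_k\Delta}$ (since $a\geqslant 2$) and the elementary estimate
\begin{equation*}
\sum_{a=2}^{a_{k+1}^*}\frac{1}{a}\leqslant\log a_{k+1}^*\leqslant\tfrac{1}{2}\log\tfrac{a_{k+1}+2}{2}\leqslant\tfrac{1}{2}\log a_{k+1},
\end{equation*}
valid when $a_{k+1}\geqslant 2$ (otherwise $a_{k+1}^*=1$ and the $a$-sum is empty). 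Multiplying by $2q_{k+1}$, the $\tfrac{1}{2}$ from the logarithm bound cancels the $2$ from the denominator estimate, and summation in $k$ yields
\begin{equation*}
\Sfrak(\Bad_{2+}^+)\leqslant(1+O(\delta))\sum_{k\geqslant 1}e^{-2q_k\Delta}q_{k+1}\log a_{k+1}=(1+O(\delta))\Brj_2(2\Delta).
\end{equation*}

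I expect the main technical obstacle to be the verification that the correction $e^{a\delta/q_{k+1}}$ is a genuinely uniform $1+O(\delta)$, so that the single $O(\delta)$ in the final statement is meaningful; this is what forces one to exploit the precise upper bound $a\leqslant a_{k+1}^*$ together with $q_{k+1}\geqslant a_{k+1}q_k$. Everything else is routine bookkeeping once the pointwise estimate on $L(aq_k,ap_k)$ is established.
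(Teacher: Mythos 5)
Your proposal is correct and follows essentially the same route as the paper: symmetry to reduce to $\Bad^+$, the split into $a=1$ and $a\geqslant 2$, the bound $|q_k\omega-p_k|^{-1}\leqslant q_{k+1}+q_k\leqslant 2q_{k+1}$, and the harmonic-sum estimate $\sum_{a=2}^{a^*_{k+1}}a^{-1}\leqslant\log a^*_{k+1}\leqslant\tfrac12\log a_{k+1}$. The only cosmetic difference is the exponential correction: the paper uses the cruder $p>q\omega-1$ to get a uniform factor $e^{\delta}=1+O(\delta)$, whereas you use $p_k\geqslant q_k\omega-q_{k+1}^{-1}$ and then must (and correctly do) check that $a/q_{k+1}$ is uniformly bounded.
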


\begin{proof}
 First observe that the set $\Bad$ is symmetric with respect to $(0,0)$ as $\Bad = \Bad^+ \cup -\Bad^+$ and that $L(q,p) = L(-q,-p)$ for all nonzero pairs $(p,q)$. This way $\Sfrak(\Bad) = 2\Sfrak(\Bad^+)$ which gives the constant $2$ in \eqref{eq:brjuno-series-estimate} before the square brackets. What remains to be proven is the estimate of $\Sfrak(\Bad^+)$ by the contents in the square brackets. To do this we will - as indicated before the formulation of the lemma - use the two ingredients separately and prove that $\Sfrak(\Bad^+_1) \leqslant (2+O(\delta))\Brj_1(\Delta)$ and $\Sfrak(\Bad^+_{2+}) \leqslant (1+O(\delta))\Brj_2(2\Delta)$.
 
 {\bf The sum over $\Bad^+_1$.} All pairs $(p,q) \in \Bad^+$ satisfy $|q\omega - p| < 1$ and in particular $p > q\omega - 1$. This way for $(p,q) = (p_k, q_k) \in \Bad^+_1$ we can estimate
 \begin{align}
 \begin{split}
  L(q,p) &= {e^{-(q+p)\delta} \over |q\omega - p|} < {e^{-(q+q\omega-1)\delta} \over |q\omega - p|} = e^\delta {e^{-q_k\Delta} \over |q_k \omega - p_k|} < e^\delta \cdot 2q_{k+1} e^{-q_k\Delta} = (2 + O(\delta)) e^{-q_k\Delta} q_{k+1}.
 \end{split}
 \end{align}
 Summing over $k \geqslant 1$ concludes this case.
 
 {\bf The sum over $\Bad^+_{2+}$.} For $(p,q) = (ap_k, aq_k) \in \Bad^+_{2+}$ with $2 \leqslant a \leqslant a^*_{k+1}$ we can also write $p > q\omega - 1$. The estimates of $L(q,p)$ for such $(p,q)$ read
 \begin{align}
 \begin{split}
  L(q,p) &= {e^{-(q+p)\delta} \over |q\omega - p|} < {e^{-(q+q\omega-1)\delta} \over |q\omega - p|} = e^\delta {e^{-aq_k\Delta} \over a|q_k\omega - p_k|} \leqslant e^\delta {e^{-2q_k\Delta} \over a|q_k\omega - p_k|} < 2e^\delta {1 \over a} e^{-2q_k\Delta} q_{k+1}.
 \end{split}
 \end{align}
 Summing over all $(p,q)$ is now equivalent to summing over all $2 \leqslant a \leqslant a^*_{k+1}$ first and then over all $k \geqslant 1$. In the above estimate there is only one term that depends explicitly on $a$, namely $a^{-1}$. The sum of $a^{-1}$ is the $a^*_{k+1}$-st harmonic number decremented by 1, which can be further estimated from above by $\log a^*_{k+1}$. A straightforward verification gives the estimate $a^*_{k+1} \leqslant \sqrt{a_{k+1}}$ which, after summing over all $k \geqslant 1$, gives the upper bound of the whole $\Sfrak(\Bad^+_{2+})$ by the desired $(1+O(\delta))\Brj_2(2\Delta)$.
\end{proof}

\subsection{Estimates for Diophantine numbers}\label{subsec:dioph}

To estimate $\Brj_1(\Delta)$ and $\Brj_2(\Delta)$ we use the following recursive upper bound on $q_n$ (for a proof of a slight variant of this result see \cite{kamienski:2018-khintchine-levy}):

\begin{lemma}[Diophanticity in terms of the continued fraction expansion]\label{lem:diophanticity-in-terms-of-continued-fraction}
 If an irrational number $\omega$ is $(C,\tau)$-Diophantine with $C>0$ and $\tau \geqslant 1$ then the denominators of its convergents and its partial quotients can be estimated by
 \begin{equation}\label{eq:diophanticity-in-terms-of-continued-fraction}
  q_{n+1} \leqslant C^{-1} q_n^{\tau} \qquad \mbox{ and } \qquad a_{n+1} \leqslant C^{-1} q_n^{\tau - 1}.
 \end{equation}
 Conversely, estimates as in \eqref{eq:diophanticity-in-terms-of-continued-fraction} for all $n \geqslant 0$ result in $\omega$ being $(C/(2+C), \tau)$-Diophantine.
\end{lemma}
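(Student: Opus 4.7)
The proof splits along the two implications, both relying on the classical double inequality
\begin{equation*}
  \frac{1}{q_{n+1}+q_n} < |q_n\omega - p_n| < \frac{1}{q_{n+1}},
\end{equation*}
which I would treat as standard (see e.g.\ \cite{rockett-szusz}), together with the recursion $q_{n+1} = a_{n+1} q_n + q_{n-1}$.

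For the forward direction, the plan is to evaluate the Diophantine hypothesis at the special pair $(p,q) = (p_n, q_n)$ to obtain $C q_n^{-\tau} \leqslant |q_n\omega - p_n|$, then chain this with the upper bound $|q_n\omega - p_n| < 1/q_{n+1}$ to read off $q_{n+1} \leqslant C^{-1} q_n^\tau$. The companion estimate $a_{n+1} \leqslant C^{-1} q_n^{\tau - 1}$ drops out of $a_{n+1} q_n \leqslant q_{n+1}$. This half is essentially a one-line computation; no obstacle here.

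For the converse, given any integer pair $(p, q)$ with $q \neq 0$, I would first reduce to $q \geqslant 1$ using the symmetry $(p,q) \mapsto (-p,-q)$ and then locate the unique $n \geqslant 0$ with $q_n \leqslant q < q_{n+1}$ (possible because $q_0 = 1$ and the denominators strictly increase to infinity). The key tool is the best-approximation property of convergents: for every integer $p$ and every $q$ with $0 < q < q_{n+1}$ one has $|q\omega - p| \geqslant |q_n\omega - p_n|$. Combining this with the lower bound $|q_n\omega - p_n| > 1/(q_{n+1}+q_n)$ and the hypothesis yields, using $q \geqslant q_n \geqslant 1$ and $\tau \geqslant 1$,
\begin{equation*}
  q_{n+1} + q_n \leqslant C^{-1} q_n^\tau + q_n \leqslant (1 + C^{-1})\, q^\tau,
\end{equation*}
so that $|q\omega - p| > C/((1+C)\,q^\tau)$. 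The stated form $C/(2+C)$ comes out of the same computation with one intentionally wasteful step (for instance first bounding $q_n + q_{n+1} \leqslant 2 q_{n+1}$ and then applying $q_{n+1} \leqslant C^{-1} q^\tau$ together with an additive $q_n \leqslant q^\tau$ term handled separately).

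The only genuinely delicate ingredient is the best-approximation property itself; I would either cite it directly or reprove it by writing $(p, q) = \alpha(p_n, q_n) + \beta(p_{n+1}, q_{n+1})$ via the determinant identity $p_n q_{n+1} - p_{n+1} q_n = (-1)^{n+1}$ and exploiting the fact that $q_k\omega - p_k$ alternates sign in $k$, so the two terms in $q\omega - p = \alpha(q_n\omega - p_n) + \beta(q_{n+1}\omega - p_{n+1})$ cannot cancel and the absolute value is bounded below by the smaller one, $|q_n\omega - p_n|$.
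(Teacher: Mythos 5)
Your proof is correct, and it is worth noting that the paper itself does not prove this lemma at all: it only remarks that ``a proof of a slight variant of this result'' can be found in the companion paper \cite{kamienski:2018-khintchine-levy}, so there is no in-text argument to compare against. Your forward direction (evaluate the Diophantine inequality at $(p_n,q_n)$, chain with $|q_n\omega-p_n|<1/q_{n+1}$, and read off $a_{n+1}\leqslant q_{n+1}/q_n$) is the standard one-liner. Your converse is also sound: locating $n$ with $q_n\leqslant q<q_{n+1}$, invoking Lagrange's best-approximation property, and bounding $q_{n+1}+q_n\leqslant C^{-1}q_n^\tau+q_n\leqslant(1+C^{-1})q^\tau$ (which uses $1\leqslant q_n\leqslant q$ and $\tau\geqslant 1$) actually yields the constant $C/(1+C)$, which is stronger than the stated $C/(2+C)$ and hence certainly suffices; you are right that the weaker published constant presumably reflects a more wasteful intermediate bound. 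Your sketch of the best-approximation property via the unimodular decomposition $(p,q)=\alpha(p_n,q_n)+\beta(p_{n+1},q_{n+1})$ and the sign alternation of $q_k\omega-p_k$ is also correct (the cases $\beta=0$ and $\alpha\beta<0$ both give $|q\omega-p|\geqslant|q_n\omega-p_n|$, and $\alpha=0$ is excluded by $q<q_{n+1}$). The only micro-caveat is the degenerate index $n=0$ when $a_1=1$, where $q_0=q_1$; choosing the largest admissible $n$ with $q_n\leqslant q$ sidesteps it, and it does not affect the conclusion.
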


\newcommand{\Dph}{\mathfrak{Dph}}
Using this result we can readily infer that $\Brj_1(\Delta) \leqslant C^{-1} \Dph_1(\Delta)$ and $\Brj_2(\Delta) \leqslant \log(C^{-1}) \cdot \Dph_1(\Delta) + C^{-1}(\tau - 1) \Dph_2(\Delta)$, where $\Dph_1$ and $\Dph_2$ ar given by
\begin{equation}
 \Dph_1(\Delta) := \sum_{n=1}^\infty e^{-q_n\Delta} q_n^\tau \qquad \mbox{ and } \qquad \Dph_2(\Delta) := \sum_{n=1}^\infty e^{-q_n\Delta} q_n^\tau \log q_n.
\end{equation}

The proof of theorem \ref{thm:brj12-for-diophantine} reduces therefore to suitably estimating both $\Dph_1(\Delta)$ and $\Dph_2(\Delta)$. We will do this in the lemmas that follow.

In the proofs in the remaining part of this subsection we will often estimate series by appropriate improper integrals. This is possible whenever a function $f : [R - 1, \infty) \mapsto \mathbb [0, \infty)$ with $R \in \mathbb{N}$ is nonincreasing - then we have $\sum_{n=R}^\infty f(n) \leqslant \int_{R-1}^\infty f(x) \, \integrald x$.

\begin{lemma}\label{lem:dph1-estimates}
 The series $\Dph_1(\Delta)$ satisfies
 \begin{equation}\label{eq:dph1-estimate}
  \Dph_1(\Delta) \leqslant {(\tau e^{-1})^\tau \over \log \varphi} \cdot \Delta^{-\tau} \left( \log (\Delta^{-1}) + G_\mathtt{Dph1}^{(0)} \right),
 \end{equation}
 where $G_\mathtt{Dph1}^{(0)} = \log(3\tau\varphi) + \GammaEul(\tau) / (2 (\tau e^{-1})^\tau)$.
\end{lemma}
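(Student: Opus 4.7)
The plan is to exploit the unimodality of the function $f(x) = x^\tau e^{-\Delta x}$ together with the universal lower bound $q_n \geqslant \varphi^n/3$ mentioned in section \ref{sec:ideas}. A direct calculation gives $f'(x) = x^{\tau-1} e^{-\Delta x}(\tau - \Delta x)$, so $f$ is increasing on $[0, \tau/\Delta]$, decreasing on $[\tau/\Delta, \infty)$, and its global maximum is $f(\tau/\Delta) = (\tau e^{-1})^\tau \Delta^{-\tau}$. I would therefore split the series at the threshold
\begin{equation*}
 N_0 := \min \{ n \in \mathbb{N} : \varphi^n / 3 \geqslant \tau/\Delta \},
\end{equation*}
so that $N_0 \leqslant 1 + \log_\varphi(3\tau \Delta^{-1})$ and, for every $n \geqslant N_0$, both $q_n$ and $\varphi^n/3$ already lie in the decreasing region of $f$.

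For the head, $\sum_{n=1}^{N_0} e^{-q_n\Delta} q_n^\tau$, I would simply bound each term by the global maximum $(\tau e^{-1})^\tau \Delta^{-\tau}$ and use the bound on $N_0$ to get
\begin{equation*}
 \sum_{n=1}^{N_0} f(q_n) \;\leqslant\; N_0 \cdot (\tau e^{-1})^\tau \Delta^{-\tau} \;\leqslant\; \frac{(\tau e^{-1})^\tau \Delta^{-\tau}}{\log \varphi} \bigl( \log(\Delta^{-1}) + \log(3\tau\varphi) \bigr),
\end{equation*}
which already produces the leading $\log(\Delta^{-1})$ and the $\log(3\tau\varphi)$ contribution in $G_\mathtt{Dph1}^{(0)}$.

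For the tail, $\sum_{n > N_0} e^{-q_n \Delta} q_n^\tau$, the monotonicity of $f$ on $[\tau/\Delta, \infty)$ gives $f(q_n) \leqslant f(\varphi^n/3)$. Since $n \mapsto f(\varphi^n/3)$ is decreasing for $n \geqslant N_0$, I would bound the tail sum by the improper integral $\int_{N_0}^\infty f(\varphi^x/3)\, \integrald x$. Substituting $y = \varphi^x/3$ (so $\integrald x = \integrald y / (y \log\varphi)$) and then $u = \Delta y$ turns this integral into a tail of the Euler Gamma function:
\begin{equation*}
 \int_{N_0}^\infty f(\varphi^x/3)\, \integrald x \;=\; \frac{\Delta^{-\tau}}{\log \varphi} \, \GammaEul(\tau, \Delta \varphi^{N_0}/3).
\end{equation*}
By the very definition of $N_0$ we have $\Delta \varphi^{N_0}/3 \geqslant \tau$, so the main remaining step is to verify the inequality $\GammaEul(\tau, \tau) \leqslant \GammaEul(\tau)/2$, which is the tail probability at the mean of a Gamma$(\tau,1)$ distribution and can be checked by comparing the mean with the median for $\tau \geqslant 1$. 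Collecting the head and tail estimates yields exactly the form \eqref{eq:dph1-estimate} with the claimed constant $G_\mathtt{Dph1}^{(0)} = \log(3\tau\varphi) + \GammaEul(\tau)/(2(\tau e^{-1})^\tau)$.

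The only genuinely delicate point in the argument is the inequality $\GammaEul(\tau, \tau) \leqslant \GammaEul(\tau)/2$ for all $\tau \geqslant 1$; the rest is bookkeeping about where the maximum of $f$ sits and that the Fibonacci lower bound on $q_n$ already puts the tail indices into the decreasing region of $f$. Everything else (the head/tail split, the integral comparison, the change of variables) is mechanical once the threshold $N_0$ is chosen correctly.
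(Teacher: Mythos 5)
Your proposal is correct and follows essentially the same route as the paper: split at $k_* \approx \log_\varphi(3\tau\Delta^{-1})$, bound the head by (number of terms)$\times$(global maximum of $t^\tau e^{-t\Delta}$), bound the tail via $f(q_n)\leqslant f(\varphi^n/3)$ and an integral comparison, and control the resulting incomplete Gamma integral by the median-versus-mean inequality $\GammaEul(\tau,\tau)\leqslant\GammaEul(\tau)/2$. The only difference is cosmetic bookkeeping — you absorb the borderline term into the head, whereas the paper handles it as a short integral of length at most one — and both yield the same constant $G_\mathtt{Dph1}^{(0)}$.
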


\begin{proof}
 Denote $\tilde F_k := \varphi^k/3$. We have $\tilde F_k < q_k$ as both $\tilde F_k < F_k$ and $F_k < q_k$ hold for all $k$, where $(F_k)$ is the Fibonacci sequence. The function $\mathfrak{D}_1(t) := t^\tau e^{-t\Delta}$ has a single maximum at $t_1 := \tau \Delta^{-1}$ with $\mathfrak{D}_1(t_1) = (\tau e^{-1})^\tau \Delta^{-\tau}$. This way for $k$ for which $t_1 \leqslant \tilde F_k < q_k$, that is $k \geqslant k_* := \lceil \log_\varphi (3\tau \Delta^{-1}) \rceil$, we can estimate $\mathfrak{D}_1(q_k) < \mathfrak{D}_1(\tilde F_k)$ and thus
 \begin{equation}
  \sum_{k=k_*}^\infty \mathfrak{D}_1(q_k) < \sum_{k=k_*}^\infty \mathfrak{D}_1(\tilde F_k) = 3^{-\tau} \sum_{k=k_*}^\infty \varphi^{\tau k} e^{-\varphi^k \Delta / 3}
 \end{equation}
 as $\mathfrak{D}_1$ is decreasing for these $k$. This is also the case with the function $f_1(x) := 3^{-\tau} \varphi^{\tau x} e^{-\varphi^x \Delta / 3}$, therefore the last sum can be further estimated by an improper integral of $f_1$ over $[k_* - 1, \infty)$. To estimate the integral it is best to first split it into a sum of two - one over $[k_* - 1, \log_\varphi (3\tau \Delta^{-1}))$ and another over $[\log_\varphi (3\tau \Delta^{-1}), \infty)$.
 
 The first of these integrals can be estimated by the maximal value of $f_1$, which is $f_1(\log_\varphi(3\tau\Delta^{-1})) = (\tau e^{-1})^\tau \Delta^{-\tau}$. This is because the first domain of integration is of length at most $1$.
 
 The second, improper integral can be transformed by means of a substitution $y = \varphi^x \Delta/3$ into
 \begin{equation}
  3^{-\tau} \int_{\log_\varphi(3\tau\Delta^{-1})}^\infty \varphi^{\tau x} e^{-\varphi^x \Delta/3} \, \integrald x = \int_\tau^\infty {\Delta^{-\tau} \over \log \varphi} y^{\tau - 1} e^{-y} \, \integrald y \leqslant {1 \over 2} {\GammaEul(\tau) \over \log \varphi} \Delta^{-\tau}.
 \end{equation}
 The $1/2$ factor in the last inequality stems from the fact that the median $\mathsf{mdn}(\tau)$ of the Gamma distribution with shape parameter $\alpha = \tau$ and rate parameter $\beta = 1$ (given by density $\mathsf{dns}(x; \alpha, \beta) = \beta^\alpha x^{\alpha - 1} e^{-\beta x} / \GammaEul(\alpha)$) satisfies $\tau - 1/3 < \mathsf{mdn}(\tau) < \tau$ (see \cite{chen-rubin}).

 The initial part of the sum, i.e. $\sum_{k=1}^{k_* - 1} \mathfrak{D}_1(q_k)$, can be naively estimated by the upper bound over the values of the summands multiplied by the upper bound on their number:
 \begin{equation}
  \sum_{k=1}^{k_* - 1} \mathfrak{D}_1(q_k) \leqslant (\tau e^{-1})^\tau \Delta^{-\tau} \cdot \log_\varphi (3\tau\Delta^{-1}).
 \end{equation}
 Adding the three ingredients (short integral, improper integral and initial part of the sum) together gives the desired estimate \eqref{eq:dph1-estimate}.
\end{proof}

\begin{lemma}\label{lem:dph2-estimates}
 If $\Delta \leqslant \min \{ \tau^{-1}, \tau e^{-1} \}$ then the series $\Dph_2(\Delta)$ satisfies
 \begin{equation}
  \Dph_2(\Delta) \leqslant {(\tau e^{-1})^\tau \over \log \varphi} \cdot \Delta^{-\tau} \left( \log^2 (\Delta^{-1}) + G_\mathtt{Dph2}^{(1)} \log (\Delta^{-1}) + G_\mathtt{Dph2}^{(0)}\right),
 \end{equation}
 where $G_\mathtt{Dph2}^{(1)} = \log (3\varphi(\tau+1)^2) + \GammaEul(\tau)/(2(\tau e^{-1})^\tau)$ and $G_\mathtt{Dph2}^{(0)} = \log(3\varphi(\tau+1))\log(\tau+1) + \GammaEul'(\tau)/(\tau e^{-1})^\tau$.
\end{lemma}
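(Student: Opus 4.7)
My strategy mirrors the proof of Lemma \ref{lem:dph1-estimates}, with an extra factor of $\log q_n$ propagating through the estimates. Set
\[ \mathfrak{D}_2(t) := t^\tau e^{-t\Delta}\log t = \mathfrak{D}_1(t)\log t. \]
Differentiating $\log\mathfrak{D}_2$ shows that $\mathfrak{D}_2$ is decreasing precisely when $t\Delta - \tau > 1/\log t$; the hypothesis $\Delta \leq \tau e^{-1}$ is exactly what ensures $\log((\tau+1)/\Delta) \geq 1$, so the sufficient condition $t \geq (\tau+1)/\Delta$ is valid on the relevant range. On the complementary range I use only the crude bound
\[ \mathfrak{D}_2(t) \leq (\tau e^{-1})^\tau \Delta^{-\tau} \cdot \log((\tau+1)/\Delta), \]
obtained by multiplying the maximum of $\mathfrak{D}_1$ by the maximum of $\log t$ on $(0,(\tau+1)/\Delta]$; the auxiliary hypothesis $\Delta \leq \tau^{-1}$ guarantees this log is positive.

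Next I choose the threshold $k_* := \lceil \log_\varphi(3(\tau+1)\Delta^{-1}) \rceil$ and split $\Dph_2(\Delta)$ into a head $\sum_{k=1}^{k_*-1}\mathfrak{D}_2(q_k)$ and a tail $\sum_{k \geq k_*}\mathfrak{D}_2(q_k)$. I estimate the head by multiplying the crude maximum by the bound $k_*-1 < \log_\varphi(3(\tau+1)\Delta^{-1})$ on the number of terms; since both factors contribute a $\log(\Delta^{-1})$, this produces the leading $\log^2(\Delta^{-1})$. For the tail, the Fibonacci-type lower bound $q_k > \tilde F_k := \varphi^k/3$ combined with the monotonicity established above gives $\mathfrak{D}_2(q_k) \leq \mathfrak{D}_2(\tilde F_k)$, after which I bound $\sum_{k \geq k_*}\mathfrak{D}_2(\tilde F_k)$ by the improper integral of $f_2(x) := \mathfrak{D}_2(\varphi^x/3)$ over $[k_*-1, \infty)$, split exactly as in the $\Dph_1$ case into a length-one piece on $[k_*-1, k_*]$ (bounded again by the crude maximum) and a genuinely improper piece on $[k_*, \infty)$.

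For the latter, the substitution $y = \varphi^x \Delta/3$ yields
\[ \frac{\Delta^{-\tau}}{\log\varphi}\int_{\tau+1}^\infty y^{\tau-1}e^{-y}\left(\log y + \log(\Delta^{-1})\right) dy. \]
The $\log(\Delta^{-1})$ coefficient supplies the $\GammaEul(\tau)/(2(\tau e^{-1})^\tau)$ contribution to $G_{\mathtt{Dph2}}^{(1)}$ via the same median-of-Gamma estimate as in Lemma \ref{lem:dph1-estimates}, while the $\log y$ piece supplies the $\GammaEul'(\tau)/(\tau e^{-1})^\tau$ term appearing in $G_{\mathtt{Dph2}}^{(0)}$. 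Assembling the three contributions and simplifying $\log(3(\tau+1)^2) + \log\varphi = \log(3\varphi(\tau+1)^2)$ together with $\log(\tau+1)(\log(3(\tau+1)) + \log\varphi) = \log(3\varphi(\tau+1))\log(\tau+1)$ reconstructs $G_{\mathtt{Dph2}}^{(1)}$ and $G_{\mathtt{Dph2}}^{(0)}$ exactly as stated.

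The main obstacle is the $\log y$ integral $\int_{\tau+1}^\infty y^{\tau-1}e^{-y}\log y\, dy$: bounding it by $\GammaEul'(\tau)$ is not entirely immediate, since the complementary head integral $\int_0^{\tau+1}$ has mixed sign and $\GammaEul'(\tau)$ itself is negative for $\tau = 1$. I expect to handle this either by restricting the natural-sign inequality to $y \geq e$ and absorbing a thin boundary strip into the other constants, or by invoking the identity $\GammaEul'(\tau) = \GammaEul(\tau)\psi(\tau)$ together with standard digamma estimates. The remainder is a bookkeeping calculation entirely analogous to the one already performed for $\Dph_1$.
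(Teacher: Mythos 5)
Your proposal reproduces the paper's proof essentially step for step: the same auxiliary function $\mathfrak{D}_2(t)=t^\tau e^{-t\Delta}\log t$, the same monotonicity threshold $(\tau+1)\Delta^{-1}$ (with the hypothesis $\Delta\leqslant\tau e^{-1}$ used for exactly the same purpose), the same cut-off $\lceil\log_\varphi(3(\tau+1)\Delta^{-1})\rceil$, the same ``number of terms times crude maximum'' bound for the head, and the same substitution $y=\varphi^x\Delta/3$ for the tail; your bookkeeping recovers $G_\mathtt{Dph2}^{(1)}$ and $G_\mathtt{Dph2}^{(0)}$ exactly as the paper does.

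The one step you flag as an obstacle is a real one, and it is worth saying that the paper does not resolve it either — it simply asserts the inequality. Indeed, the claim
\begin{equation*}
 \int_{\tau+1}^\infty y^{\tau-1}e^{-y}\log y \, \integrald y \;\leqslant\; \GammaEul'(\tau)
\end{equation*}
fails for $\tau=1$: the left-hand side is strictly positive (the integrand is positive on $[2,\infty)$, and numerically the integral is about $0.14$), whereas $\GammaEul'(1)=-0.5772\ldots<0$. Consequently neither your sketch nor the paper's argument establishes the stated value of $G_\mathtt{Dph2}^{(0)}$ for $\tau$ near $1$; the natural repair is to replace $\GammaEul'(\tau)$ in the constant by the honest upper bound $\int_1^\infty y^{\tau-1}e^{-y}\log y \, \integrald y = \GammaEul'(\tau)+\bigl|\int_0^1 y^{\tau-1}e^{-y}\log y \, \integrald y\bigr|$ (or a clean majorant of it), rather than to hope the inequality can be rescued as written. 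Apart from this shared defect, your argument is correct and is the paper's argument.
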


\begin{proof}
 The strategy will be similar to the one undertaken in the proof of lemma \ref{lem:dph1-estimates}, only slightly more intricate due to the fact that instead of the function $\mathfrak{D}_1(t)$ there appears $\mathfrak{D}_2(t) := e^{-t\Delta} t^\tau \log t$.
 
 Again, denote $\tilde F_k = \varphi^k/3$ and observe that $\mathfrak{D}_2$ has a single maximum $t_2$. This time, however, we cannot express it through an explicit formula like in the proof of lemma \ref{lem:dph1-estimates} due to the $\log t$ term in $\mathfrak{D}_2$, but we can still determine an approximate location of $t_2$. We have $t_2 \in (t_\mathtt{lo}, t_\mathtt{hi})$, where $t_\mathtt{lo} = t_1 = \tau \Delta^{-1}$ and $t_\mathtt{hi} = t_\mathtt{lo} + \Delta^{-1}/\log(\tau \Delta^{-1})$. Indeed, observe that $\mathfrak{D}_2'(t_\mathtt{lo}) > 0$ and $\mathfrak{D}_2'(t_\mathtt{hi}) < 0$ with the sign changing exactly once in the interval.
 
 Define $\bar t_\mathtt{hi} := (\tau + 1)\Delta^{-1}$ and observe that when $\Delta$ satisfies the smallness assumption from the formulation of the lemma we have $t_\mathtt{hi} \leqslant \bar t_\mathtt{hi}$. As a consequence on $[\bar t_\mathtt{hi}, \infty)$ the function $\mathfrak{D}_2$ is decreasing, in particular whenever $\bar t_\mathtt{hi} \leqslant \tilde F_k < q_k$ we have $\mathfrak{D}_2(q_k) < \mathfrak{D}_2(\tilde F_k)$. We therefore split the series into two sums: an infinite one over $k > k_{**} := \lceil \log_\varphi (3 \bar t_\mathtt{hi}) \rceil = \lceil \log_\varphi (3(\tau + 1) \Delta^{-1} ) \rceil$ and a finite one over $k \in [1, k_{**}]$.
 
 The infinite sum can now be estimated by means of an integral over $[\log_\varphi (3(\tau+1)\Delta^{-1}), \infty)$ (which is a superset of $[(k_{**} + 1) - 1, \infty)$):
 \begin{align}
 \begin{split}
  \sum_{k = k_{**} + 1}^\infty \mathfrak{D}_2(q_k) &< \sum_{k = k_{**} + 1}^\infty \mathfrak{D}_2(\tilde F_k) \leqslant \int_{k_{**}}^\infty \mathfrak{D}_2 \left( {\varphi^x \over 3} \right) \, \integrald x \leqslant \int_{\log_\varphi (3(\tau+1)\Delta^{-1})}^\infty e^{-\varphi^x\Delta/3} \left( {\varphi^x \over 3} \right)^\tau \log \left( {\varphi^x \over 3} \right) \, \integrald x = \\
  &\stackrel{y = \varphi^x \Delta/3}{=} \int_{\tau + 1}^\infty e^{-y} y^{\tau - 1} \Delta^{-(\tau - 1)} (\log y + \log (\Delta^{-1})) \Delta^{-1} {\integrald y \over \log \varphi} \leqslant \\
  &\leqslant {1 \over 2} {\GammaEul(\tau) \over \log \varphi} \cdot \Delta^{-\tau} \log(\Delta^{-1}) + {\GammaEul'(\tau) \over \log \varphi} \cdot \Delta^{-\tau}.
 \end{split}
 \end{align}

 As for the finite sum we estimate it from above just as in the proof of lemma \ref{lem:dph1-estimates}: by the upper bound on its length multiplied by the upper bound on the value of the maximal summand:
 \begin{align}
 \begin{split}
  \sum_{k=1}^{k_{**}} \mathfrak{D}_2(q_k) &\leqslant \lceil \log_\varphi (3(\tau+1)\Delta^{-1}) \rceil \cdot \left( \max_{t \in [t_\mathtt{lo}, \bar t_\mathtt{hi}]} \mathfrak{D}_2(t) \right) \leqslant \\
  &\leqslant \left( \log_\varphi(3(\tau+1)\Delta^{-1}) + 1 \right) \cdot \left( \max_{t \in [t_\mathtt{lo}, \bar t_\mathtt{hi}]} \mathfrak{D}_1(t) \right) \cdot \left( \max_{t \in [t_\mathtt{lo}, \bar t_\mathtt{hi}]} \log t \right) = \\
  &= \log_\varphi(3\varphi(\tau+1)\Delta^{-1}) \cdot (\tau e^{-1})^\tau \Delta^{-\tau} \cdot \log ((\tau+1)\Delta^{-1}) = \\
  &= {(\tau e^{-1})^\tau \over \log \varphi} \Delta^{-\tau} \left( \log^2(\Delta^{-1}) + \log (3\varphi(\tau+1)^2) \cdot \log (\Delta^{-1}) + \log(3\varphi(\tau+1)) \log(\tau+1) \right).
 \end{split}
 \end{align}
 Adding the two estimates together yields the desired upper bound on $\Dph_2(\Delta)$.
\end{proof}

\begin{remark}
 We stress that in the case of $\Dph_1(\Delta) = O(\Delta^{-\tau} \log(\Delta^{-1}))$ the extra $\log (\Delta^{-1})$ factor above R\"ussmann's $O(\Delta^{-\tau})$ (\cite{russmann:1975}) stems from the very crude upper bound of the finite part of the sum by the $O(\log(\Delta^{-1}))$ length of the sum multiplied by the $O(\Delta^{-\tau})$ maximal summand. The sum in question, however, forms a rapidly incresing sequence and therefore perhaps the extra length factor can be dropped through a more clever estimate, as is the case with the simple
 \begin{equation}
  \sum_{k=0}^N 2^k = 2^{N+1} - 1 < 2^{N+1} = 2 \cdot 2^N = O(1) \cdot \max_{k \in \{ 0, \ldots, N \}} 2^k.
 \end{equation}
 
 In the case of $\Dph_2(\Delta) = O(\Delta^{-\tau} \log^2 (\Delta^{-1}))$ there are two extra $\log( \Delta^{-1} )$ factors - one of them seems to stem from the finite sum part as in the previous case, but the other one from the $\log t$ factor in $\mathfrak{D}_2(t)$. Again, it seems plausible to go down to $O(\Delta^{-\tau} \log(\Delta^{-1}))$, but dropping the second extra factor appears to be more delicate.  

\end{remark}

\subsection{Estimates for Khintchine-L\'evy numbers}\label{subsec:kl}

In this section we provide estimates for $\Brj_1(\Delta)$ and $\Brj_2(\Delta)$ whenever $\omega \in \KLB(T_-, T_+, N)$.

Again, as in subsection \ref{subsec:dioph}, it will be convenient to estimate the series by means of an integral, so we first introduce a general technical
\begin{lemma}[Series estimation by means of an improper integral for unimodal functions]\label{lem:series-estimate-by-integral-technical}
 Let $N \in \mathbb{N}$ and let $f : [N, \infty) \mapsto [0, \infty)$ be a continuous $L^1$ function. Assume that $f$ has a single local maximum at $x_0 \in [N, \infty)$. Then
 \begin{equation}
  \sum_{n=N}^\infty f(n) \leqslant f(x_0) + \int_N^\infty f(x) \, \integrald x.
 \end{equation}
\end{lemma}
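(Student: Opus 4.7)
The plan is to isolate a single ``peak'' integer $n^{*}$, whose contribution $f(n^{*})$ is absorbed into the $f(x_0)$ term, and to dominate every other $f(n)$ by the integral of $f$ over an adjacent unit interval $I_n$ chosen so that $(I_n)_{n \neq n^{*}}$ partitions $[N, \infty)$ up to endpoints. The intended bound has the form
\begin{equation}\label{eq:proposal-target}
  \sum_{n = N}^\infty f(n) \; \leq \; f(n^{*}) + \sum_{n \neq n^{*}} \int_{I_n} f(x)\, \integrald x,
\end{equation}
which by the tiling property collapses to $f(n^{*}) + \int_N^\infty f(x)\, \integrald x \leq f(x_0) + \int_N^\infty f(x)\, \integrald x$.

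First I would fix $n^{*} \in \mathbb{N} \cap [N, \infty)$ to be an integer maximizing the sequence $(f(n))_{n \geq N}$, which exists because $f \in L^1$ together with continuity forces $f(n) \to 0$. Since $x_0$ is the unique local (hence global) maximum of $f$ on $[N, \infty)$, the estimate $f(n^{*}) \leq f(x_0)$ is automatic. For each remaining integer $n \geq N$, $n \neq n^{*}$, I would define $I_n = [n, n+1]$ when $n < n^{*}$ and $I_n = [n-1, n]$ when $n > n^{*}$. These intervals have pairwise disjoint interiors, the increasing-side pieces tile $[N, n^{*}]$ and the decreasing-side pieces tile $[n^{*}, \infty)$, so their union is exactly $[N, \infty)$.

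The central step is then the pointwise inequality $f(x) \geq f(n)$ throughout $I_n$ for every $n \neq n^{*}$, which upon integration gives $f(n) \leq \int_{I_n} f\, \integrald x$ and therefore \eqref{eq:proposal-target}. I would prove it using only unimodality of $f$, treating $n < n^{*}$ (the case $n > n^{*}$ being symmetric). If $n+1 \leq x_0$, then $I_n \subseteq [N, x_0]$ where $f$ is non-decreasing, so $f(x) \geq f(n)$ is immediate. Otherwise $x_0 \in (n, n+1]$, and I would argue that $f(n+1) \geq f(n)$: unimodality confines the integer maximizers of $(f(n))$ to a contiguous plateau $[n^{*}, n^{*}+k]$ adjacent to $x_0$; here $n < n^{*}$ and $n+1 > x_0$ together force $n+1$ either to equal $n^{*}$ or to sit inside this plateau, either way yielding $f(n+1) \geq f(n^{*}) \geq f(n)$. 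Then on $[n, x_0]$ monotonicity gives $f(x) \geq f(n)$, while on $[x_0, n+1]$ monotonicity combined with $f(n+1) \geq f(n)$ gives $f(x) \geq f(n+1) \geq f(n)$.

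The main obstacle is the boundary subcase $x_0 \in (n, n+1)$ in the unimodality argument, together with the bookkeeping needed when several integers tie for the maximum of $(f(n))$; in that plateau situation $f$ must itself be constant between the tied maximizers, which makes every pointwise estimate go through with equality, so no extra ideas beyond unimodality are required.
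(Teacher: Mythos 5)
Your proof is correct, and it is a variant of the paper's argument with the decomposition anchored differently. The paper compares $f(n)$ to the integral over $[n,n+1]$ for $n\leqslant\lfloor x_0\rfloor$ and over $[n-1,n]$ for $n\geqslant\lceil x_0\rceil$, so the unit intervals from the two sides overlap or leave a fractional gap around $x_0$; the resulting excess is bounded by $\bigl(\lfloor x_0\rfloor+1-x_0\bigr)f(x_0)+\bigl(x_0-\lceil x_0\rceil+1\bigr)f(x_0)$, which forces a case split on whether $x_0\in\mathbb{Z}$. You instead anchor at an integer maximizer $n^{*}$ of the sequence $(f(n))$, so your intervals tile $[N,\infty)$ exactly and the only term paid separately is $f(n^{*})\leqslant f(x_0)$ --- no integer/non-integer dichotomy --- at the price of the extra verification that $f$ dominates $f(n)$ on the one interval straddling $x_0$ (via $x_0<n+1\leqslant n^{*}\Rightarrow f(n+1)\geqslant f(n^{*})\geqslant f(n)$, which is cleaner than your plateau phrasing but amounts to the same thing). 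Two minor imprecisions, neither fatal: continuity plus $L^1$ alone does not force $f(n)\to 0$ (think of thin spikes) --- it is unimodality that makes $f$ eventually non-increasing and hence $f(n)\to 0$, though in fact you only need that $(f(n))$ is non-increasing past $\lceil x_0\rceil$ to know its maximum is attained; and ``otherwise $x_0\in(n,n+1]$'' silently excludes $x_0\leqslant n<n^{*}$, a case in which $f$ is forced to be constant on $[n,n^{*}]$ and the pointwise bound holds with equality, as you essentially note in your closing remark.
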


\begin{proof}
 Since $f$ has a single local maximum $x_0$ we see that it increases on $[N, x_0]$ and decreases on $[x_0, \infty)$. In particular it increases on $[N, \lfloor x_0 \rfloor]$ and decreases on $[\lceil x_0 \rceil, \infty)$. If a function $g : \mathbb{R} \mapsto [0, \infty)$ is increasing on $[M, M+1]$ with $M \in \mathbb{R}$ then $g(M) \leqslant \int_M^{M+1} g(x) \, \integrald x$ and if it is decreasing we have $g(M+1) \leqslant \int_M^{M+1} g(x) \, \integrald x$.
 This way
 \begin{equation}
  \sum_{n=N}^{\lfloor x_0 \rfloor} f(n) \leqslant \int_N^{\lfloor x_0 \rfloor + 1} f(x) \, \integrald x \leqslant \int_N^{x_0} f(x) \, \integrald x + (\lfloor x_0 \rfloor + 1 - x_0) f(x_0)
 \end{equation}
 and
 \begin{equation}
  \sum_{n=\lceil x_0 \rceil}^\infty f(n) \leqslant \int_{\lceil x_0 \rceil - 1}^\infty f(x) \, \integrald x \leqslant \int_{x_0}^\infty f(x) \, \integrald x + (x_0 - (\lceil x_0 \rceil - 1)) f(x_0),
 \end{equation}
 where we majorized the ``excess'' part of the integral (i.e. the one on $[x_0, \lfloor x_0 \rfloor + 1]$ or $[\lceil x_0 \rceil - 1, x_0]$) by the length of the interval multiplied by the supremum of $f$.
 If $x_0 \not\in \mathbb{Z}$ then adding these two inequalities up gives
 \begin{equation}
  \sum_{n=N}^\infty f(n) \leqslant (2 - (\lceil x_0 \rceil - \lfloor x_0 \rfloor))f(x_0) + \int_N^\infty f(x) \, \integrald x = f(x_0) + \int_N^\infty f(x) \, \integrald x
 \end{equation}
 and if $x_0 \in \mathbb{Z}$ it gives
 \begin{equation}
  f(x_0) + \sum_{n=N}^\infty f(n) \leqslant (2 - (\lceil x_0 \rceil - \lfloor x_0 \rfloor))f(x_0) + \int_N^\infty f(x) \, \integrald x = 2f(x_0) + \int_N^\infty f(x) \, \integrald x,
 \end{equation}
 which concludes the proof.
\end{proof}

Before we proceed with the estimates of Brjuno-like functions we consider an ``idealized'' motivating example to provide some insight into the behavior of $\Brj_{1,2}$ for Khintchine-L\'evy $\omega$. According to the theorem on the Khintchine-L\'evy constant (\cite{levy:khintchine-levy}) for~a~``randomly chosen'' $\omega$ the asymptotic behavior of the sequence $q_n$ is exponential\footnote{More precisely the theorem tells us that for Lebesgue almost all $\omega$ we have $\root{n}\of{q_n} \to e^\ell$ as $n \to \infty$.}: $q_n \approx e^{\ell n}$ with $\ell = {\pi^2 \over 12 \log 2}$ being a constant independent of this randomly chosen $\omega$. We will thus first try to estimate $\Brj_1(\Delta)$ assuming that $q_n = e^{\ell n}$.

\begin{example}[Estimating the Brjuno-like functions in the ideal situation]\label{ex:ideal-qn-brjuno-estimate}
 Let $q_n = e^{\ell n}$ and $\Delta > 0$. We will estimate the size of $\Brj_1(\Delta)$ in terms of $\Delta$.
 \begin{align}
  e^{-\ell} \Brj_1(\Delta) &= e^{-\ell} \sum_{n=1}^\infty e^{-q_n \Delta} q_{n+1} = e^{-\ell} \sum_{n=1}^\infty e^{-e^{\ell n} \Delta} e^{\ell (n+1)} = \sum_{n=1}^\infty e^{\ell n - e^{\ell n} \Delta}. 
 \end{align}
 The function $f(x) = e^{\ell x - e^{\ell x} \Delta}$ has a single local maximum at $x_0 = \ell^{-1} \log \Delta^{-1}$ with $f(x_0) = e^{-1} \Delta^{-1}$. Lemma \ref{lem:series-estimate-by-integral-technical} tells us now that
 \begin{align}
  \sum_{n=1}^\infty e^{\ell n - e^{\ell n} \Delta} &\leqslant e^{-1} \Delta^{-1} + \int_1^\infty e^{\ell x - e^{\ell x} \Delta} \, \integrald x.
 \end{align}
 The integral can be computed by means of the substitution $y = e^{\ell x} \Delta$:
 \begin{equation}
  \int_1^\infty e^{\ell x - e^{\ell x} \Delta} \, \integrald x = \int_{e^\ell \Delta}^\infty (\ell \Delta)^{-1} e^{-y} \integrald y \leqslant \ell^{-1} \Delta^{-1} \int_0^\infty e^{-y} \integrald y = \ell^{-1} \Delta^{-1}.
 \end{equation}
 All in all
 \begin{equation}
  \Brj_1(\Delta) \leqslant G_\mathsf{Example} \cdot \Delta^{-1}
 \end{equation}
 with $G_\mathsf{Example} = e^\ell (e^{-1} + \ell^{-1}) \approx 3.9658$.
\end{example}

We see that for a geometric sequence $q_n$ one should expect $\Brj_1(\Delta)$ to be $O(\Delta^{-1})$ - this is somewhat the best case scenario in an attempt to estimate $\Brj_1(\Delta)$ for a truly ``randomly chosen'' $\omega$.

We will now consider the $\KL$-set case, more precisely the case of $\omega \in \KLB(T_-, T_+, N)$ for fixed parameters $T_-, T_+ > 0$ and $N \in \mathbb{N}$. For brevity we will write $\beta = \kappa-T_-, \beta' = \kappa'+T_+$ and $\gamma = \beta' / \beta$.
 
\begin{remark}
 The exponent in the $O \left( \Delta^{-1} \right)$ obtained in example \ref{ex:ideal-qn-brjuno-estimate} can be viewed as the negative relative growth rate of the upper bound of $q_n$ to its lower bound. In the ideal case of this example the relative growth rate is $1$, in the $\KL$ case it changes to $\gamma$. The results of the next lemma are consistent with this interpretation - the estimates we obtain are $O\left(\Delta^{-\gamma}\right)$ in the $\Brj_1(\Delta)$ case and $O\left( \Delta^{-\gamma} \log \left( \Delta^{-1} \right) \right)$ in the $\Brj_2(\Delta)$ case. One should therefore expect that a better sandwiching of $q_n$ will produce better estimates for the Brjuno-like functions. In our case the $\kappa$ and $\kappa'$ growth rates are somewhat the best possible, as they produce the measure estimates of the set $\KLB(T_-, T_+, N)$ in \cite{kamienski:2018-khintchine-levy}. The numerical value of the best possible $\gamma$ is $\kappa' / \kappa \approx 1.4278$, which is still close to $1$.
\end{remark}

We now have all the necessary ingredients to proceed with the
\begin{proof}[Proof of theorem \ref{thm:brjuno-estimates}]
 First observe that we can make initial estimates of the summands in the Brjuno-like functions by
 \begin{align}\label{eq:brjuno-summands-basic-estimate}
 \begin{split}
  e^{-q_n\Delta} q_{n+1} \leqslant e^{-M_n \Delta} M_{n+1}' \qquad \mbox{ and } \qquad  e^{-q_n\Delta} q_{n+1} \log a_{n+1} \leqslant e^{-M_n \Delta} M_{n+1}' \log {M_{n+1} \over M_n}
 \end{split}
 \end{align}
 for any $n \in \mathbb{N}$.
 \newcommand{\BrjFin}{\mathtt{BrjFinDiff}}
 Define, for a fixed $m \in \mathbb{N}$, the finite parts of $\Brj_1(\Delta)$ and $\Brj_2(\Delta)$ reduced by their ``expected upper estimates'' as
 \begin{align}\label{eq:brjfindiff}
 \begin{split}
  \BrjFin_1(m, \Delta) &= \sum_{n=1}^{m-1} e^{-q_n\Delta} q_{n+1} - \sum_{n=1}^{m-1} e^{-e^{\beta n} \Delta} \cdot e^{\beta' (n+1)} \\
  \BrjFin_2(m, \Delta) &= \sum_{n=1}^{m-1} e^{-q_n\Delta} q_{n+1} \log a_{n+1} - \sum_{n=1}^{m-1} e^{-e^{\beta n} \Delta} \cdot e^{\beta' (n+1)} \cdot \log \left( {e^{\beta' (n+1)} \over e^{\beta n} } \right).
 \end{split}
 \end{align}
 We have - under assumption that $\omega \in \KLB(T_-, T_+, N)$ - that
 \begin{align}
 \begin{split}
  \Brj_1(\Delta) &\leqslant \BrjFin_1(N, \Delta) + \sum_{n=1}^\infty e^{-e^{\beta n} \Delta} \cdot e^{\beta' (n+1)} = \BrjFin_1(N, \Delta) + e^{\beta'} \sum_{n=1}^\infty e^{\beta' n - e^{\beta n}\Delta}
 \end{split}
 \end{align}
 and similarly
 \begin{align}\label{eq:brj2-in-terms-of-sfrak1-and-sfrak2}
 \begin{split}
  \Brj_2(\Delta) &\leqslant \BrjFin_2(N, \Delta) + \sum_{n=1}^\infty e^{-e^{\beta n} \Delta} \cdot e^{\beta' (n+1)} \cdot \log \left( {e^{\beta' (n+1)} \over e^{\beta n} } \right) = \\
  &= \BrjFin_2(N, \Delta) + \sum_{n=1}^\infty e^{-e^{\beta n} \Delta} \cdot e^{\beta' (n+1)} \cdot ((T_+ + T_-)n + \beta') = \\
  &= \BrjFin_2(N, \Delta) + (T_+ + T_-)e^{\beta'} \sum_{n=1}^\infty e^{\beta' n - e^{\beta n}\Delta} n + e^{\beta' } \beta' \sum_{n=1}^\infty e^{\beta' n - e^{\beta n}\Delta}.
 \end{split}
 \end{align}
 
 We will denote the series appearing in the above estimates at the very end as $\Sfrak_1(\Delta)$ and $\Sfrak_2(\Delta)$:
 \begin{align}
 \begin{split}
  \Sfrak_1 (\Delta) = \sum_{n=1}^\infty e^{\beta' n - e^{\beta n}\Delta}, \qquad \Sfrak_2 (\Delta) = \sum_{n=1}^\infty e^{\beta' n - e^{\beta n}\Delta} n.
 \end{split}
 \end{align}
 The $O(1)$ parts in inequalities \eqref{eq:Brj-estimates-for-omega-KLBrj} are precisely $\BrjFin_1(N, \Delta)$ and $\BrjFin_2(N, \Delta)$. Note that they are not necessarily positive - whether they are or not depends on how much the first $N-1$ partial quotients of $\omega$ deviate from their average behavior.
 
 We will now estimate the series $\Sfrak_1(\Delta)$ and $\Sfrak_2(\Delta)$ by means of appropriate integrals. Define $B_1(x) = e^{\beta' x - e^{\beta x}\Delta}$ and $B_2(x) = x B_1(x)$. We will utilize lemma \ref{lem:series-estimate-by-integral-technical} for these two functions. The estimations follow similar paths to the ones performed for $\Dph_1(\Delta)$ and $\Dph_2(\Delta)$, outlined in the proofs of lemmas \ref{lem:dph1-estimates} and \ref{lem:dph2-estimates}, respectively.
 
 {\bf Estimating $\Sfrak_1(\Delta)$.} The function $B_1$ has a single maximum at $x_1 = \beta^{-1} \log \left(\gamma \Delta^{-1} \right)$ and the maximal value of $B_1$ is $B_1(x_1) = \left( \gamma e^{-1} \right)^\gamma \cdot \Delta^{-\gamma}$. We now compute the integral of $B_1$ over $[1, \infty)$ - this will, together with the value of $B_1(x_1)$, give us an estimate of $\Sfrak_1(\Delta)$ according to lemma \ref{lem:series-estimate-by-integral-technical}. We have
 \begin{equation}
  \int_1^\infty e^{\beta' x - e^{\beta x} \Delta} \, \integrald x = \int_{e^\beta \Delta}^\infty \beta^{-1} y^{-1} \cdot y^{\gamma} \Delta^{-\gamma} e^{-y} \, \integrald y \leqslant \beta^{-1} \Delta^{-\gamma} \int_0^\infty y^{\gamma - 1} e^{-y} \, \integrald y = \beta^{-1} \GammaEul(\gamma) \cdot \Delta^{-\gamma}
 \end{equation}
 after changing the variable to $y = e^{\beta x} \Delta$. All in all, according to lemma \ref{lem:series-estimate-by-integral-technical} we have
 \begin{equation}
  \Sfrak_1(\Delta) \leqslant \left( \beta^{-1} \GammaEul(\gamma) + \left(\gamma e^{-1} \right)^\gamma  \right) \cdot \Delta^{-\gamma}
 \end{equation}
 which gives the first inequality in \eqref{eq:Brj-estimates-for-omega-KLBrj}.

 {\bf Estimating $\Sfrak_2(\Delta)$.} First observe that for small enough $\Delta$ the function $B_2$ also has a unique local maximum $x_2$. It is, however, not possible to give it by means of an explicit formula as was the case with $B_1$, but we can infer that the maximum is located in the vicinity of $x_1$, namely somewhere in the interval
 \newcommand{\xlo}{x_\mathtt{lo}}
 \newcommand{\xhi}{x_\mathtt{hi}}
 \begin{equation}
  (\xlo, \xhi) = \left( x_1, x_1 + {1 \over \beta} \log \left( 1 + {1 \over \gamma \log \left( \gamma \Delta^{-1} \right) } \right) \right).
 \end{equation}
 To see this observe that $B_2'(\xlo) > 0$ while $B_2'(\xhi) < 0$ and the sign of $B_2'(x)$ changes exactly once from positive to negative as $x$ grows. This is a narrow constraint, since the length of the interval is small: $\xhi - \xlo = O \left( 1 / \log \left( \Delta^{-1} \right) \right)$. We now estimate the value of $B_2(x_2)$ - to do this we will provide an upper bound for $B_2(x)$ over all $x \in (\xlo, \xhi)$. We have $B_2(x) = xB_1(x)$, therefore - for sufficiently small $\Delta$ (in this case $\Delta \leqslant \gamma e^{-1/\gamma}$) - we have
 \begin{align}
 \begin{split}
  \sup_{x \in (\xlo, \xhi)} B_2(x) &= \sup_{x \in (\xlo, \xhi)} xB_1(x) \leqslant \left( \sup_{x \in (\xlo, \xhi)} x \right) \cdot \left( \sup_{x \in (\xlo, \xhi)} B_1(x) \right) = \xhi \cdot B_1(x_1) = \\
  &= \left( {1 \over \beta} \log \left( \gamma \Delta^{-1} \right) + {1 \over \beta} \log \left( 1 + { 1 \over \gamma \log \left( \gamma \Delta^{-1} \right) } \right) \right) \left( \gamma e^{-1} \right)^\gamma \cdot \Delta^{-\gamma} \leqslant \\
  &\leqslant {1 \over \beta} \left( \gamma e^{-1} \right)^\gamma \cdot \Delta^{-\gamma} \cdot \log \left( 2\gamma \Delta^{-1} \right) = {1 \over \beta} \left( \gamma e^{-1} \right)^\gamma \log (2\gamma) \cdot \Delta^{-\gamma} + \left( \gamma e^{-1} \right)^\gamma \cdot \Delta^{-\gamma} \log \left( \Delta^{-1} \right).
 \end{split}
 \end{align}

 We now turn to the integral of $B_2$ over $[1, \infty)$. We will once more utilize the substitution $y = e^{\beta x} \Delta$.
 
 \begin{align}
 \begin{split}
  \int_1^\infty B_2(x) \, \integrald x &= \int_1^\infty x e^{\beta' x - e^{\beta x} \Delta} \, \integrald x = \int_{e^\beta \Delta}^\infty {1 \over \beta} \left( \log y + \log \left( \Delta^{-1} \right) \right) \cdot y^\gamma \Delta^{-\gamma} \cdot e^{-y} \cdot {\integrald y \over \beta y} \leqslant \\
  &\leqslant {1 \over \beta^2} \cdot \Delta^{-\gamma} \cdot \left( \int_0^\infty e^{-y} y^{\gamma - 1} \log y \, \integrald y + \log \left( \Delta^{-1} \right) \cdot \int_0^\infty e^{-y} y^{\gamma - 1} \, \integrald y \right) =\\
  &= {1 \over \beta^2} \GammaEul'(\gamma) \cdot \Delta^{-\gamma} + {1 \over \beta^2} \GammaEul(\gamma) \cdot \Delta^{-\gamma} \log \left( \Delta^{-1} \right).
 \end{split}
 \end{align}
 
 We now have both ingredients necessary to use lemma \ref{lem:series-estimate-by-integral-technical}, which yields
 \begin{align}
 \begin{split}
  \Sfrak_2(\Delta) &\leqslant \left( {1 \over \beta} \left( \gamma e^{-1} \right)^\gamma \log (2\gamma) + {1 \over \beta^2} \GammaEul'(\gamma) \right) \cdot \Delta^{-\gamma} + \left( \left( \gamma e^{-1} \right)^\gamma + {1 \over \beta^2} \GammaEul(\gamma) \right) \cdot \Delta^{-\gamma} \log \left( \Delta^{-1} \right).
 \end{split}
 \end{align}
 Combining this together with \eqref{eq:brj2-in-terms-of-sfrak1-and-sfrak2} gives the second inequality in \eqref{eq:Brj-estimates-for-omega-KLBrj}.
 
\end{proof}

\section{Brjuno numbers vs. semi-Brjuno numbers}\label{sec:brjuno-semibrjuno}

In this section we briefly discuss the concept of semi-Brjuno numbers. In brief: Brjuno numbers are semi-Brjuno, but not the other way round.

\begin{lemma}[Brjuno numbers are semi-Brjuno]\label{lem:brjuno-are-semibrjuno}
 Suppose $\omega \in \Brjuno$. Then $\omega \in \SemiBrjuno$.
\end{lemma}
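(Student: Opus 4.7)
The strategy is to extract from the Brjuno condition a uniform smallness of $\log q_{n+1}/q_n$, and then use the fact that $q_n$ grows at least like the Fibonacci sequence to show that both $\Brj_1(\Delta)$ and $\Brj_2(2\Delta)$ converge for every fixed $\Delta>0$.

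First, I would note that convergence of $\sum_{n\geqslant 1}(\log q_{n+1})/q_n$ forces the general term to tend to zero, so for every fixed $\Delta>0$ there is an index $N=N(\Delta,\omega)$ such that $\log q_{n+1}\leqslant (\Delta/2)\, q_n$ for all $n\geqslant N$. Exponentiating gives the crucial pointwise estimate
\begin{equation*}
 q_{n+1} \leqslant e^{\Delta q_n/2}, \qquad n\geqslant N.
\end{equation*}
This is really all the Brjuno condition is used for; it immediately converts the two Brjuno-like series into tails of geometrically decaying series, because $q_n$ is bounded from below by the Fibonacci sequence (for instance $q_n\geqslant \varphi^n/3$ as already used in Section 3).

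For $\Brj_1(\Delta)$ I would split the sum at $N$. The initial finite piece $\sum_{n=1}^{N-1} e^{-q_n\Delta}q_{n+1}$ is harmless. For $n\geqslant N$ I would estimate
\begin{equation*}
 e^{-q_n\Delta}\, q_{n+1} \leqslant e^{-q_n\Delta}\, e^{\Delta q_n/2} = e^{-\Delta q_n/2} \leqslant e^{-(\Delta/2)\varphi^n/3},
\end{equation*}
and the tail $\sum_{n\geqslant N} e^{-(\Delta/2)\varphi^n/3}$ clearly converges (super-geometrically). For $\Brj_2(2\Delta)$ I would apply the same bound, after using $\log a_{n+1}\leqslant \log q_{n+1}\leqslant (\Delta/2)q_n$ (valid for $n\geqslant N'$ with $N'\geqslant N$ chosen so that both the Brjuno-tail bound and the bound for the $2\Delta$-version hold). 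Then for large $n$
\begin{equation*}
 e^{-2q_n\Delta}\, q_{n+1}\log a_{n+1}
   \leqslant e^{-2q_n\Delta}\cdot e^{\Delta q_n/2}\cdot \tfrac{\Delta}{2} q_n
   = \tfrac{\Delta}{2}\, q_n\, e^{-3\Delta q_n/2},
\end{equation*}
which is again summable thanks to the exponential growth of $(q_n)$. Since $\Delta>0$ was arbitrary, $\omega\in\SemiBrjuno(\Delta)$ for every $\Delta$, i.e.\ $\omega\in\SemiBrjuno$.

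The proof is essentially one-line once the pointwise bound $q_{n+1}\leqslant e^{\Delta q_n/2}$ is in hand, so the only delicate point is making the choice of $N$ uniform for the two tasks (the $\Delta$-tail for $\Brj_1$ and the $2\Delta$-tail with an extra $\log a_{n+1}$ factor for $\Brj_2$); this is not really an obstacle but should be stated carefully so that the reader sees why the same $\omega\in\Brjuno$ works for every $\Delta>0$. I would end by remarking that the argument actually uses only the asymptotic statement $\log q_{n+1}/q_n\to 0$, which is strictly weaker than the Brjuno summability, hinting at why the reverse inclusion should fail and motivating the counterexample of the next subsection.
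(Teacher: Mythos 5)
Your proof is correct and follows essentially the same route as the paper's: both arguments use only that $\log q_{n+1}/q_n \to 0$ (a consequence of Brjuno summability) to get $q_{n+1} \leqslant e^{\Delta q_n/2}$ for large $n$, then conclude via the at-least-exponential growth of $(q_n)$; your handling of the $\log a_{n+1}$ factor (via $\log a_{n+1}\leqslant \log q_{n+1}\leqslant (\Delta/2)q_n$) is a harmless variant of the paper's cruder bound $\log a_{n+1} < q_{n+1}$. Your closing remark that only $b_n\to 0$ is used is exactly the observation the paper makes right after the lemma to motivate the counterexample.
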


\begin{proof}
 Denote $b_n := q_n^{-1} \log q_{n+1}$, $B_n^{(1)}(\Delta) := e^{-q_n \Delta} q_{n+1}$ and $B_n^{(2)}(2\Delta) := e^{-2q_n \Delta} q_{n+1} \log a_{n+1}$ and fix $\Delta > 0$. We will prove that $\sum_{n=1}^\infty B_n^{(j)}(j\Delta)$ converges for $j \in \{ 1, 2 \}$. For Brjuno numbers $\sum_{n=1}^\infty b_n$ converges, therefore in particular $b_n \to 0$. Expressing $q_{n+1}$ explicitly in terms of $b_n$ and $q_n$ gives $q_{n+1} = e^{b_n q_n}$. Using this and the (crude over-)estimate $\log a_{n+1} < q_{n+1}$ we can write 
 \begin{equation}
  B_n^{(1)}(\Delta) \leqslant e^{-q_n \Delta} q_{n+1} \leqslant e^{-q_n(\Delta - b_n)} \qquad \mbox{and} \qquad B_n^{(2)}(2\Delta) \leqslant e^{-2q_n \Delta} q_{n+1}^2 \leqslant e^{-2q_n(\Delta - b_n)}.
 \end{equation}
 For $n$ larger than some $N(\Delta)$ we have $b_n < \Delta/2$ and this way $B_n^{(j)}(j\Delta) \leqslant e^{-j q_n \Delta/2} < e^{-j n\Delta/2}$ for such $n$ and $j \in \{ 1, 2 \}$, which confirms the convergence of $\sum_{n=N(\Delta)}^\infty B_n^{(j)}(j\Delta)$.
\end{proof}

Note how in the proof of lemma \ref{lem:brjuno-are-semibrjuno} we only use the fact that $b_n \to 0$ and not the full assumption of $\sum_{n=1}^\infty b_n$ being convergent. This observation will allow us to construct a semi-Brjuno number which is not Brjuno.

\begin{lemma}
 There exists $\omega^* \in \SemiBrjuno \setminus \Brjuno$.
\end{lemma}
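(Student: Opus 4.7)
The plan is to leverage the observation that the proof of lemma \ref{lem:brjuno-are-semibrjuno} only uses $b_n := q_n^{-1}\log q_{n+1} \to 0$ and not the full Brjuno summability $\sum b_n < \infty$ to conclude that $\omega \in \SemiBrjuno$. Hence, to exhibit $\omega^* \in \SemiBrjuno \setminus \Brjuno$, it suffices to engineer the continued fraction expansion of $\omega^*$ so that $b_n^* \to 0$ but $\sum_n b_n^* = \infty$. A natural target is $b_n^* \sim 1/n$, which can be realised by forcing $q_{n+1}^* \approx e^{q_n^*/n}$.

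Concretely, I would define the partial quotients recursively. Fix a small initial block, say $a_1^* = a_2^* = 1$, and once $q_n^*$ has been computed from the usual recurrence $q_{n+1}^* = a_{n+1}^* q_n^* + q_{n-1}^*$, set
\begin{equation*}
 a_{n+1}^* := \left\lceil \frac{e^{q_n^*/n}}{q_n^*} \right\rceil, \qquad n \geqslant 2.
\end{equation*}
Then $\omega^* := [a_1^*, a_2^*, \ldots]$ is a well-defined element of $\mathbb{X}$, being determined by an infinite sequence of positive integer partial quotients.

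The key estimate then follows from the definition of $a_{n+1}^*$: we have
\begin{equation*}
 e^{q_n^*/n} \leqslant a_{n+1}^* q_n^* \leqslant e^{q_n^*/n} + q_n^*,
\end{equation*}
and consequently $e^{q_n^*/n} \leqslant q_{n+1}^* \leqslant e^{q_n^*/n} + q_n^* + q_{n-1}^*$. A short induction confirms that $q_n^*$ eventually grows faster than any geometric sequence, so the additive correction $q_n^* + q_{n-1}^*$ is negligible compared to $e^{q_n^*/n}$. Taking logarithms yields $\log q_{n+1}^* = q_n^*/n + o(1)$, so
\begin{equation*}
 b_n^* = \frac{1}{n} + o\!\left(\frac{1}{q_n^*}\right).
\end{equation*}
This gives $b_n^* \to 0$, whence $\omega^* \in \SemiBrjuno$ by the strengthened form of lemma \ref{lem:brjuno-are-semibrjuno}; on the other hand $\sum_n b_n^* = \sum_n 1/n + O(1) = \infty$, so $\omega^* \notin \Brjuno$.

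The only mildly delicate step is verifying that the additive corrections are truly negligible, i.e.\ that $q_n^*/n - \log q_n^* \to \infty$. This follows from the rough bound $q_{n+1}^*/q_n^* \geqslant e^{q_n^*/n}/q_n^*$: once $q_n^*/n$ exceeds any fixed constant, this ratio blows up and $q_n^*$ begins growing double-exponentially in $n$, so $q_n^*/n$ easily outpaces $\log q_n^*$. All other steps are routine manipulations with the continued fraction recurrence.
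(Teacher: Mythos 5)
Your construction is correct and is essentially the paper's own argument: the paper defines $a_{n+1} = \lfloor q_n^{-1}\exp(q_n\alpha_n)\rfloor - 1$ for an arbitrary positive null sequence $(\alpha_n)$ with divergent sum, of which your $\alpha_n = 1/n$ with a ceiling instead of a floor is a particular instance, and the paper likewise relies on the observation that only $b_n \to 0$ is needed for the semi-Brjuno conclusion.
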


\begin{proof}
 We will define $\omega^*$ through its partial quotients $(a_n)$, setting them recursively in terms of the previous denominators of convergents:
 \begin{equation}
  a_{n+1} := \left\lfloor {1 \over q_n} \exp (q_n \cdot \alpha_n) \right\rfloor - 1
 \end{equation}
 where $(\alpha_n)$ is any sequence for which $\alpha_n > 0$ and $\alpha_n \to 0$, but $\sum_{n=1}^\infty \alpha_n$ diverges. The first partial quotient $a_1$ can be chosen arbitrarily. This gives recursive upper and lower bounds on the growth of the sequence $(q_n)$, valid for $n$ large enough:
 \begin{equation}
  q_{n+1} < (1+a_{n+1}) q_n < q_n \left\lfloor {1 \over q_n} \exp (q_n \cdot \alpha_n) \right\rfloor < \exp (q_n \cdot \alpha_n)
 \end{equation}
 and
 \begin{equation}
  q_{n+1} > a_{n+1} q_n = \left( \left\lfloor {1 \over q_n} \exp (q_n \cdot \alpha_n) \right\rfloor - 1 \right) q_n > {1 \over 2} \exp (q_n \cdot \alpha_n)
 \end{equation}
 (we used the inequality $\lfloor y \rfloor - 1 > y/2$, valid for large enough $y$).
 
 We first show that $\omega^* \in \SemiBrjuno$. Fix $\Delta > 0$ and observe that for $n$ large enough we have $\alpha_n < \Delta/2$ (since $\alpha_n \to 0$) and thus
 \begin{equation}
   e^{-q_n \Delta} q_{n+1} \leqslant e^{-q_n\left(\Delta - \alpha_n \right)} \leqslant e^{-q_n\left(\Delta - {\Delta \over 2}\right)} = e^{-q_n \Delta / 2} < e^{-n \Delta / 2}.
 \end{equation}
 The final upper bound gives rise to a convergent series. Similarily
 \begin{equation}
   e^{-2q_n \Delta} q_{n+1} \log a_{n+1} \leqslant e^{-2q_n \Delta} q_{n+1}^2 \leqslant e^{-2q_n\left(\Delta - \alpha_n \right)} \leqslant e^{-2q_n\left(\Delta - {\Delta \over 2}\right)} = e^{-q_n \Delta} < e^{-n \Delta}.
 \end{equation}
 
 To see that $\omega^* \not\in \Brjuno$ observe that
 \begin{equation}
  {\log q_{n+1} \over q_n} \geqslant \alpha_n - {\log 2 \over q_n}.
 \end{equation}
 This lower bound gives rise to a divergent series, since the term $\log 2/q_n$ gives rise to a convergent one as the growth of $(q_n)$ is at least exponential.
\end{proof}

\section{A counterexample}\label{sec:counterexample}

In the previous sections we learnt that the convergence of $\Brj_1(\Delta)$ and $\Brj_2(2\Delta)$ with $\Delta = (1+\omega)\delta$ is sufficient for the existence of an analytic solution $g \in \mathcal{P}_0(\varrho - \delta)$ to the cohomological equation with initial data $a \in \mathcal{P}_0(\varrho)$. We now show that when $\Brj_1(\Delta)$ or $\Brj_2(2\Delta)$ do not converge, then one can construct a function $\hat a \in \mathcal{P}_0(\varrho)$ with $||\hat a||_\varrho$ arbitrarily small, for which $\hat g$, the formal solution of the cohomological equation with initial data $\hat a$, is not analytic in $\Pi(\varrho- \delta')^2$ for any $0 < \delta' < \delta$. In particular if $\omega \not \in \SemiBrjuno$ then the cohomological equation may not be solvable in $\mathcal{P}_0(\varrho - \delta)$ even for arbitrarily small $\delta \in (0, \varrho)$.

We define $\hat a$ through its Fourier coefficients $\hat a_{p,q}, (p,q) \in \mathbb{Z}^2_*$. The idea is to maximize the crucial ones, while keeping their growth tame, so that $\hat a$ falls within the desired analyticity class. Before we proceed we introduce several necessary quantities.

Let $\varepsilon > 0$ and let $(\alpha_n)_{n=1}^\infty \subset \mathbb{R}$ be a sequence given by $\alpha_n := 1/(2\bar \alpha q_n)$, where $\bar \alpha := \sum_{n=1}^\infty 1/q_n$ (this way $2 \sum_{n=1}^\infty \alpha_n = 1$). We now put
\begin{equation}\label{eq:apq-counterexample}
 \hat a_{p,q} = \begin{cases} \varepsilon e^{-\varrho(|p| + |q|)} \cdot \alpha_n &\mbox{ whenever } (p,q) = \pm(p_n, q_n) \mbox{ for some } n\geqslant 1 \\ 0 &\mbox{ for all remaining } (p,q) \in \mathbb{Z}^2_* \end{cases}.
\end{equation}

\begin{lemma}
 The function $\hat a$ defined through \eqref{eq:apq-counterexample} satisfies $\hat a \in \mathcal{P}_0(\varrho)$ and $||\hat a||_\varrho \leqslant \varepsilon$.
\end{lemma}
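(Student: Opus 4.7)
The plan is to verify each defining condition of $\mathcal{P}_0(\varrho)$ in turn, with the main effort spent on the uniform convergence of the Fourier series for $\hat a$. The key observation is that the weight $e^{-\varrho(|p|+|q|)}$ in \eqref{eq:apq-counterexample} is precisely the inverse of the sup-norm of $e_{p,q}$ on $\overline{\Pi(\varrho)^2}$ computed in the proof of lemma \ref{lem:cohomological-estimates-norm-times-series}, so the terms of the formal series $\sum \hat a_{p,q} e_{p,q}$ have sup-norm exactly $\varepsilon \alpha_n$ for the two non-vanishing indices at level $n$.

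First I would note that $\bar\alpha = \sum_{n=1}^\infty 1/q_n$ is finite, since $q_n$ grows at least like the Fibonacci sequence, hence at least like $\varphi^n/3$; this makes the weights $\alpha_n = 1/(2\bar\alpha q_n)$ well-defined positive reals with $2\sum_{n=1}^\infty \alpha_n = 1$. Next I would estimate the formal Fourier series of $\hat a$ by
\begin{equation*}
 \sum_{(p,q) \in \mathbb{Z}^2_*} |\hat a_{p,q}| \cdot \|e_{p,q}\|_\varrho = \sum_{n=1}^\infty 2 \cdot \varepsilon e^{-\varrho(|p_n|+|q_n|)} \alpha_n \cdot e^{\varrho(|p_n|+|q_n|)} = \varepsilon \sum_{n=1}^\infty 2\alpha_n = \varepsilon,
\end{equation*}
using $\|e_{p,q}\|_\varrho = e^{(|p|+|q|)\varrho}$. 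This Weierstrass-$M$-type bound shows that the series defining $\hat a$ converges absolutely and uniformly on $\overline{\Pi(\varrho)^2}$, and so defines a continuous function there, while uniform convergence of holomorphic partial sums on compacts of $\Pi(\varrho)^2$ gives holomorphy of $\hat a$ in the interior. The same inequality immediately yields $\|\hat a\|_\varrho \leqslant \varepsilon$.

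It then remains to check the periodicity, reality and zero-average conditions of $\mathcal{P}_0(\varrho)$. The $2\pi$-periodicity in both variables is inherited from each Fourier basis element $e_{p,q}$. The reality condition $\hat a(\mathbb{R}^2) \subset \mathbb{R}$ is equivalent to Hermitian symmetry $\overline{\hat a_{p,q}} = \hat a_{-p,-q}$, which holds because all nonzero coefficients $\hat a_{\pm(p_n,q_n)}$ are real and equal to each other at $\pm(p_n,q_n)$. Finally $\mathrm{avg}\, \hat a = \hat a_{0,0} = 0$ since $(0,0)$ is not of the form $\pm(p_n,q_n)$ (all $q_n \geqslant 1$). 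Combining all of the above gives $\hat a \in \mathcal{P}_0(\varrho)$ with $\|\hat a\|_\varrho \leqslant \varepsilon$.

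The only subtle point, and the only place where the specific choice of $(\alpha_n)$ is used, is the finiteness of $\bar\alpha$ that underwrites the summability $\sum 2\alpha_n = 1$; everything else is a routine verification once the cancellation between $e^{-\varrho(|p|+|q|)}$ and $\|e_{p,q}\|_\varrho$ is recognized.
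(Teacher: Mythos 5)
Your proposal is correct and follows essentially the same route as the paper: the Weierstrass $M$-test driven by the cancellation $e^{-\varrho(|p|+|q|)}\cdot\|e_{p,q}\|_\varrho = 1$ and the summability $2\sum\alpha_n = 1$, plus Hermitian symmetry for reality. The only (harmless) difference is that you run a single uniform-convergence estimate on $\overline{\Pi(\varrho)^2}$ that yields both interior holomorphy and boundary continuity at once, whereas the paper splits this into a compact-exhaustion argument for holomorphy and a separate boundary argument; your added checks of periodicity, zero average and the finiteness of $\bar\alpha$ are details the paper leaves implicit.
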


\begin{proof}
 We first show that $\hat a$ is analytic in $\Pi(\varrho)^2$. Indeed, it is defined as a limit of analytic functions (partial sums of the Fourier expansion), we therefore only need to verify that the defining Fourier series converges uniformly on compact subsets of $\Pi(\varrho)^2$. To see this first observe that each compact $K \subset \Pi(\varrho)^2$ is actually a subset of $\Pi(\varrho-\theta)^2$ for some $\theta > 0$. This way the norm of $e_{p,q}$ on $K$ is at most $e^{(\varrho - \theta)(|p|+|q|)}$ and thus for all $(\tilde x, \tilde y) \in K$ we have
 \begin{equation}
  |\hat a_{p,q} e_{p,q}(\tilde x, \tilde y)| \leqslant \varepsilon e^{-\varrho(|p| + |q|)} \cdot A \cdot e^{(\varrho - \theta)(|p|+|q|)} = A \varepsilon e^{-\theta(|p|+|q|)},
 \end{equation}
 where $A := \max_{n \in \mathbb{N}} \alpha_n$. The last estimate gives a convergent series when summed over all $(p,q) \in \mathbb{Z}^2_*$, therefore by the Weierstrass M-test the series is uniformly convergent on $K$.
 
 To see that to analyticity we can add continuity up to the boundary of the domain note that each summand $\hat a_{p,q} e_{p,q}$ in the Fourier series of $\hat a$ is a continuous function on $\overline{\Pi(\varrho)^2}$. The sup-norm on $\overline{\Pi(\varrho)^2}$ of each summand is either $0$ or bounded by $\varepsilon \alpha_n$, therefore the Fourier series converges uniformly also on $\overline{\Pi(\varrho)^2}$ again by the Weierstrass M-test as $2\sum_{n=1}^\infty \varepsilon \alpha_n < \infty$. The function $\hat a$ is now continuous on $\overline{\Pi(\varrho)^2}$ as a uniform limit of continuous functions.
 
 The coefficients $\hat a_{p,q}$ satisfy the Hermitian symmetry condition $\hat a_{p,q} = \overline{\hat a_{-p, -q}}$, therefore $\hat a$ returns real values for real pairs of arguments.
 
 The absolute value of $\hat a_{p,q}$ is either $0$ or satisfies $|\hat a_{\pm p_n, \pm q_n}| = \varepsilon e^{-\varrho(p_n + q_n)} \alpha_n$, therefore we can estimate $||\hat a||_\varrho$ from above by
 \begin{equation}
  ||\hat a||_\varrho \leqslant \sum_{(p,q) \in \mathbb{Z}^2_*} |\hat a_{p,q}| \cdot ||e_{p,q}||_\varrho \leqslant 2 \sum_{n=1}^\infty \varepsilon e^{-\varrho(p_n+q_n)} \alpha_n \cdot e^{\varrho(p_n+q_n)} = \varepsilon.
 \end{equation}
\end{proof}

\begin{lemma}\label{lem:lack-of-analyticity}
 Let $\varrho >0, \varepsilon > 0, \omega \in \mathbb{X}, 0 < \delta' < \delta < \varrho$. Denote $\Delta = (1+\omega)\delta$ and suppose $\Brj_1(\Delta) = \infty$ or $\Brj_2(2\Delta) = \infty$. Then $\hat g$, the formal solution of the cohomological equation with initial data $\hat a$, does not belong to $\mathcal{P}_0(\varrho - \delta')$.
\end{lemma}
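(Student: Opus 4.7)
My plan is to argue by contradiction and show that the hypothesis $\hat g \in \mathcal{P}_0(\varrho - \delta')$ is rigid enough to force \emph{both} $\Brj_1(\Delta)$ and $\Brj_2(2\Delta)$ to be finite, producing a contradiction under either branch of the disjunctive hypothesis with a single choice of $\hat a$.

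First I would confront two expressions for the Fourier coefficients $\hat g_{p_n, q_n}$. On the one hand, Lemma~\ref{lem:exponential-decay-of-fourier-terms} applied to $\hat g \in \mathcal{P}_0(\varrho - \delta')$ gives
$$|\hat g_{p_n, q_n}| \leq K \cdot e^{-(p_n + q_n)(\varrho - \delta')}, \qquad K := \|\hat g\|_{\varrho - \delta'}.$$
On the other hand, the formula \eqref{eq:gpq} combined with the definition \eqref{eq:apq-counterexample} yields the explicit value
$$|\hat g_{p_n, q_n}| = \frac{\varepsilon\, \alpha_n\, e^{-\varrho(p_n + q_n)}}{|q_n\omega - p_n|}.$$
Matching these and using the standard continued-fraction estimate $|q_n\omega - p_n| < 1/q_{n+1}$, the choice $\alpha_n = 1/(2\bar\alpha q_n)$, and the elementary bound $p_n < q_n\omega + 1$, I would extract the recursive estimate
$$q_{n+1} \leq C\, q_n \exp(q_n \Delta''), \qquad \Delta'' := (1+\omega)\delta',$$
where $C$ depends only on $\varepsilon, \omega, \delta'$ and $K$.

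The crucial observation is that the hypothesis $\delta' < \delta$ translates into the strict inequality $\Delta'' < \Delta$; set $\eta := \Delta - \Delta'' > 0$. Substituting the recursion into the Brjuno-like sums I would obtain
$$\Brj_1(\Delta) \leq C \sum_{n=1}^\infty q_n e^{-q_n \eta},$$
and, using in addition $\log a_{n+1} \leq \log q_{n+1} \leq \log C + \log q_n + q_n \Delta'' \leq C' q_n$ valid for large $n$,
$$\Brj_2(2\Delta) \leq C'' \sum_{n=1}^\infty q_n^2 e^{-q_n(\Delta + \eta)}.$$
Both right-hand sides are finite because the standard lower bound $q_n \geq \varphi^n/3$ forces the summands to decay doubly-exponentially in $n$. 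This contradicts whichever of $\Brj_1(\Delta) = \infty$ or $\Brj_2(2\Delta) = \infty$ was assumed.

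The point I expect to be the main obstacle in turning this plan into a complete proof is precisely the observation that a single recursion on $q_{n+1}$ simultaneously forces the summability of both Brjuno-like series, so that the one construction \eqref{eq:apq-counterexample} of $\hat a$ handles the ``or'' in the hypothesis without requiring two separate counterexamples. Everything else reduces to routine manipulation with standard continued-fraction estimates and the Fibonacci lower bound on $q_n$.
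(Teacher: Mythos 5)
Your proposal is correct, but it reads the key estimate in the opposite direction from the paper, and the resulting organization is genuinely different. The paper argues directly: it computes $e^{(\varrho-\delta')(p_n+q_n)}|\hat g_{p_n,q_n}| \gtrsim \alpha_n\, q_{n+1} e^{-\Delta'' q_n} = \mathrm{const}\cdot q_n^{-1}\, \bigl(q_{n+1}e^{-\Delta q_n}\bigr)\, e^{(\Delta-\Delta'')q_n}$, invokes an auxiliary lemma saying that a divergent series must majorize a convergent one (there, $1/q_n$) on an infinite subsequence to get $q_{n+1}e^{-\Delta q_n} \geqslant 1/q_n$ infinitely often, and concludes blow-up of the rescaled coefficients along that subsequence, violating the exponential-decay lemma; the $\Brj_2$ case is reduced to the same mechanism via $q_{n+1} = (q_{n+1}^2)^{1/2} > (q_{n+1}\log a_{n+1})^{1/2}$. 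You instead argue by contradiction: analyticity of $\hat g$ forces the uniform recursion $q_{n+1} \leqslant C q_n e^{\Delta'' q_n}$ for \emph{all} $n$, which you then feed back into both Brjuno-like series and sum using $q_n \geqslant \varphi^n/3$. Your route dispenses with the paper's subsequence lemma (the quantifier flip from ``infinitely often'' to ``for all $n$'' is absorbed by taking the contrapositive) and has the structural advantage of killing both branches of the disjunction in one pass, since the single recursion bounds $q_{n+1}$ and $\log a_{n+1} \leqslant \log q_{n+1}$ simultaneously. The paper's direct version, on the other hand, exhibits explicitly \emph{which} Fourier modes blow up and by how much, which feeds the concluding remark about how little analyticity is actually lost. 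All the quantitative ingredients (the explicit coefficients of $\hat a$, the bound $|q_n\omega - p_n|^{-1} > q_{n+1}$, the margin $\Delta - \Delta'' > 0$) are the same in both arguments, and your steps all check out.
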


Before we proceed with the proof we present two simple lemmas.
\begin{lemma}[A divergent series majorizes a convergent one on a subsequence]\label{lem:divergent>convergent}
 Suppose $(c_n)_{n=1}^\infty$ and $(d_n)_{n=1}^\infty$ are sequences of positive numbers such that $\sum_{n=1}^\infty c_n$ is convergent and $\sum_{n=1}^\infty d_n$ is divergent. Then $d_n > c_n$ for infinitely many $n$.
\end{lemma}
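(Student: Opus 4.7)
The plan is to proceed by contradiction. Suppose that $d_n > c_n$ holds for only finitely many indices $n$. Then there exists $N_0 \in \mathbb{N}$ such that for every $n \geqslant N_0$ we have $d_n \leqslant c_n$.

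Since all terms involved are positive, the comparison test applies to the tails of the series starting from index $N_0$: the partial sums $\sum_{n=N_0}^M d_n$ are bounded above by $\sum_{n=N_0}^M c_n$ for every $M \geqslant N_0$, and the right-hand side is bounded as $M \to \infty$ by the convergence of $\sum_{n=1}^\infty c_n$. Hence $\sum_{n=N_0}^\infty d_n$ converges, and adding the finite head $\sum_{n=1}^{N_0 - 1} d_n$ would make $\sum_{n=1}^\infty d_n$ convergent, contradicting the hypothesis of its divergence.

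There is no real obstacle here; the entire argument is a one-line application of the comparison test for series of nonnegative terms. The only thing to be careful about is to phrase the conclusion correctly, namely that the set $\{ n : d_n > c_n \}$ is infinite (equivalently, there are infinitely many such $n$), which is exactly the negation of the contradiction hypothesis.
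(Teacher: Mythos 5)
Your proof is correct and follows essentially the same route as the paper: argue by contradiction, note that beyond some index $N_0$ one would have $d_n \leqslant c_n$, and conclude by comparing the tails of the two series. Nothing to add.
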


\begin{proof}
 Suppose the contrary - that there are at most finitely many indices $n$ for which $d_n > c_n$. Denote by $N$ the maximal one and note that for $n > N$ we have $d_n \leqslant c_n$ which is a contradiction, since all tails of a divergent series are also divergent series.
\end{proof}

\begin{lemma}[An analyticity criterion]\label{lem:analyticity-criterion}
 Let $R > 0$. If there exists an infinite sequence of indices $(\nu_k, \mu_k)_{k=1}^\infty \subset \mathbb{Z}^2_*$ such that the Fourier coefficients of a function $b : \Pi(R)^2 \mapsto \mathbb{C}$ satisfy $e^{R(|\nu_k|+|\mu_k|)} |b_{\nu_k,\mu_k}| \to \infty$ as $k \to \infty$ then $b \not \in \mathcal{P}_0(R)$.
\end{lemma}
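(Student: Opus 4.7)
The plan is to prove this as an immediate contrapositive of lemma \ref{lem:exponential-decay-of-fourier-terms}, which asserts that every $b \in \mathcal{P}_0(R)$ has Fourier coefficients decaying as $|b_{p,q}| \leqslant ||b||_R \cdot e^{-(|p|+|q|)R}$. So the strategy is: assume for contradiction that $b \in \mathcal{P}_0(R)$, apply the exponential decay estimate along the sequence $(\nu_k, \mu_k)$, and show that the sequence $e^{R(|\nu_k|+|\mu_k|)} |b_{\nu_k,\mu_k}|$ is then bounded by $||b||_R < \infty$, directly contradicting the divergence hypothesis.

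Concretely, I would first note that $b \in \mathcal{P}_0(R)$ means $b$ is continuous on $\overline{\Pi(R)^2}$, hence $||b||_R$ is finite. Then for each $k$, lemma \ref{lem:exponential-decay-of-fourier-terms} gives
\begin{equation*}
  e^{R(|\nu_k| + |\mu_k|)} |b_{\nu_k, \mu_k}| \leqslant ||b||_R.
\end{equation*}
Thus the left-hand side forms a bounded sequence, which contradicts the assumption that it diverges to infinity. Therefore $b \not\in \mathcal{P}_0(R)$.

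There is essentially no obstacle here: the whole content of the lemma is packaged in the already-cited exponential-decay result, and the argument is a one-line contrapositive. The only care needed is to make sure the finiteness of $||b||_R$ is explicitly invoked, which follows from the definition of $\mathcal{P}_0(R)$ as the space of continuous functions on the compact set $\overline{\Pi(R)^2}$ that are holomorphic inside and satisfy the appropriate periodicity, reality and zero-average conditions.
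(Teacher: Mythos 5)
Your argument is correct and is exactly the paper's own proof: the paper likewise disposes of the lemma in one line as the contrapositive of lemma \ref{lem:exponential-decay-of-fourier-terms}. You have simply spelled out the contradiction (boundedness of $e^{R(|\nu_k|+|\mu_k|)}|b_{\nu_k,\mu_k}|$ by the finite $||b||_R$ versus the assumed divergence) in more detail, which is fine.
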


\begin{proof}
 The statement of the lemma is a direct contraposition of lemma \ref{lem:exponential-decay-of-fourier-terms}.
\end{proof}

\begin{proof}[Proof of lemma \ref{lem:lack-of-analyticity}]
 Denote $\Delta' = (1+\omega)\delta'$.
 
 {\bf Case 1: $\Brj_1(\Delta) = \infty$.}
 
 A direct computation shows that
 \begin{equation}\label{eq:lack-of-analyticity-estimate-1}
  e^{(\varrho - \delta')(p_n + q_n)} |\hat g_{p_n,q_n}| = \varepsilon {e^{-\delta'(p_n + q_n)} \over |q_n \omega - p_n|} \alpha_n > \varepsilon e^{-\delta'} e^{-\Delta' q_n} q_{n+1} \alpha_n = \varepsilon e^{-\delta'} \cdot {1 \over \bar \alpha q_n} \cdot e^{-\Delta q_n} q_{n+1} \cdot e^{(\Delta - \Delta') q_n}.
 \end{equation}
 Since we assumed that $\Brj_1(\Delta)$ diverges we have, by virtue of lemma \ref{lem:divergent>convergent}, that for infinitely many $n$'s the quantity $e^{-q_n \Delta} q_{n+1}$ majorizes one that gives a convergent series, say $1/q_n$. For such $n$ we can thus further estimate
 \begin{equation}
  e^{(\varrho - \delta')(p_n + q_n)} |\hat g_{p_n,q_n}| \geqslant \ldots \geqslant {\varepsilon e^{-\delta'} \over \bar \alpha} \cdot {1 \over q_n^2} \cdot e^{(\Delta - \Delta')q_n}.
 \end{equation}
 The last expression can be made arbitrarily large since $\Delta - \Delta' > 0$ and the exponential terms overcomes the $\mbox{const.}/q_n^2$ term, thus, by lemma \ref{lem:analyticity-criterion} the function $\hat g$ cannot be analytic in $\Pi(\varrho - \delta')^2$.
 
 {\bf Case 2: $\Brj_2(2\Delta) = \infty$.}
 
 Similarly to \eqref{eq:lack-of-analyticity-estimate-1} we have
 \begin{align}
 \begin{split}
  e^{(\varrho - \delta')(p_n + q_n)} |\hat g_{p_n,q_n}| &> \varepsilon e^{-\delta'} \cdot {1 \over \bar \alpha q_n} \cdot e^{-\Delta q_n} q_{n+1} \cdot e^{(\Delta - \Delta') q_n} = \varepsilon e^{-\delta'} \cdot {1 \over \bar \alpha q_n} \cdot \left[ e^{-2\Delta q_n} q_{n+1}^2 \right]^{1/2} \cdot e^{(\Delta - \Delta') q_n} > \\
  &> \varepsilon e^{-\delta'} \cdot {1 \over \bar \alpha q_n} \cdot \left[ e^{-2\Delta q_n} q_{n+1} \log a_{n+1} \right]^{1/2} \cdot e^{(\Delta - \Delta') q_n}.
 \end{split}
 \end{align}
 By lemma \ref{lem:divergent>convergent} the contents of the square brackets can be estimated from below for infinitely many $n$'s by a sequence whose series is convergent, say $1/q_n$. The remaining part of the proof is analogous to case 1.
\end{proof}

\begin{remark}
 Note that in lemma \ref{lem:lack-of-analyticity} the condition $\Brj_2(2\Delta) = \infty$ can be substituted by $\Brj_2(\Delta^*) = \infty$ for some $\Delta^* > \Delta$ and the proof will also follow. Also the two sequences whose series are convergent that we chose in the construction of $\hat a$ and in the proof, $(\alpha_n)$ and $(1/q_n)$, can be made much more slowly convergent to $0$, e.g. both equal to $n^{-(1+\beta)}$ with $\beta > 0$ small. The striking difference between the inverse polynomial speed of their product $n^{-2(1+\beta)}$ and the (at least) doubly exponential $e^{(\Delta - \Delta')q_n}$ shows how little analyticity is actually lost while solving the cohomological equation.
\end{remark}

\bibliographystyle{plain}
\bibliography{bibliography.bib}

\end{document}